% for final submission, remove the font size
\documentclass[11pt]{article}

% if you need to pass options to natbib, use, e.g.:
\PassOptionsToPackage{numbers, sort&compress}{natbib}
% before loading neurips_2021

% ready for submission
%\usepackage[preprint]{neurips_2021}

% for arxiv preprint only
\usepackage[margin=1in]{geometry}
\usepackage{natbib}
\usepackage{libertine}
\usepackage[libertine]{newtxmath}
\usepackage{authblk}

% to compile a preprint version, e.g., for submission to arXiv, add add the
% [preprint] option:
%     \usepackage[preprint]{neurips_2021}

% to compile a camera-ready version, add the [final] option, e.g.:
%     \usepackage[final]{neurips_2021}

% to avoid loading the natbib package, add option nonatbib:
%    \usepackage[nonatbib]{neurips_2021}

\usepackage[utf8]{inputenc} % allow utf-8 input
\usepackage[T1]{fontenc}    % use 8-bit T1 fonts
\usepackage{hyperref}       % hyperlinks
\usepackage{url}            % simple URL typesetting
\usepackage{booktabs}       % professional-quality tables
\usepackage{amsfonts}       % blackboard math symbols
\usepackage{nicefrac}       % compact symbols for 1/2, etc.
\usepackage{microtype}      % microtypography

\usepackage[dvipsnames,svgnames,x11names]{xcolor}
\definecolor{figblue}{RGB}{90,190,250}   % colors
\usepackage{tikz}                       % Tikz drawings.
\usetikzlibrary{arrows.meta, positioning}

% package
%\usepackage[T1]{fontenc}
%\usepackage{newtxmath}

%\usepackage{xifthen}
%\usepackage{physics}
%\usepackage{hyperref}
\usepackage{cleveref}
\usepackage{amsthm,thmtools}
\allowdisplaybreaks
\usepackage{comment}
\usepackage{graphicx}
\usepackage{algorithm}
\usepackage[noend]{algpseudocode}
\usepackage{enumitem}
\setenumerate[1]{itemsep=0pt, partopsep=0pt, parsep=\parskip, topsep=0pt}
\setitemize[1]{itemsep=0pt, partopsep=0pt, parsep=\parskip, topsep=0pt}

\newtheorem{theorem}{Theorem}[section]
\newtheorem{lemma}[theorem]{Lemma}

\newtheorem{definition}{Definition}[section]
\newtheorem{example}[theorem]{Example}
\newtheorem{assumption}{Assumption}[section]

\newtheorem{corollary}[theorem]{Corollary}

\let\originalleft\left
\let\originalright\right
\renewcommand{\left}{\mathopen{}\mathclose\bgroup\originalleft}
\renewcommand{\right}{\aftergroup\egroup\originalright}

\newcommand{\norm}[1]{\left\lVert#1\right\rVert}
\newcommand{\abs}[1]{\left\lvert#1\right\rvert}

\newcommand{\bR}{\mathbb{R}}
\newcommand{\bP}[2][]{\Pr\ifthenelse{\isempty{#1}}{}{_{#1}}\left[#2\right]}
\newcommand{\bE}[2][]{\mathop\mathbb{E}\ifthenelse{\isempty{#1}}{}{_{#1}}\left[#2\right]}
\newcommand{\bI}[2][]{\mathop\mathbb{I}\ifthenelse{\isempty{#1}}{}{_{#1}}\left[#2\right]}
\newcommand{\Var}[2][]{\mathbf{Var}\ifthenelse{\isempty{#1}}{}{_{#1}}\left[#2\right]}

\DeclareMathOperator*{\argmin}{arg\,min}

% Quickly settle notation issues.
\newcommand{\horizonvar}{p}
\newcommand{\decayvar}{\lambda}

\newcommand{\yiheng}[1]{\textcolor{purple}{[Yiheng says: #1]}}

\allowdisplaybreaks

\title{Perturbation-based Regret Analysis of Predictive Control in Linear Time Varying Systems
}

% arxiv compact author listing
\author[1]{Yiheng Lin\thanks{Corresponding author: \textit{yihengl (at) caltech (dot) edu}}$^\dagger$}
\author[2]{Yang Hu$^\dagger$}
\author[1]{Haoyuan Sun$^\dagger$}
\author[1]{Guanya Shi$^\dagger$}
\author[1]{Guannan Qu$^\dagger$}
\author[1]{Adam Wierman}
\affil[1]{California Institute of Technology, Pasadena, CA, USA}
\affil[2]{Tsinghua University, Beijing, China}

\date{June, 2021}

% The \author macro works with any number of authors. There are two commands
% used to separate the names and addresses of multiple authors: \And and \AND.
%
% Using \And between authors leaves it to LaTeX to determine where to break the
% lines. Using \AND forces a line break at that point. So, if LaTeX puts 3 of 4
% authors names on the first line, and the last on the second line, try using
% \AND instead of \And before the third author name.

% Neurips requires this

%\author{
%  %David S.~Hippocampus\thanks{Use footnote for providing further information
%  %  about author (webpage, alternative address)---\emph{not} for acknowledging
%  %  funding agencies.} \\
%  %Department of Computer Science\\
%  %Cranberry-Lemon University\\
%  %Pittsburgh, PA 15213 \\
%  %\texttt{hippo@cs.cranberry-lemon.edu} \\
%  %\And
%  Yiheng Lin \\
%  California Institute of Technology \\
%  Pasadena, CA, USA \\
%  \texttt{yihengl@caltech.edu}
%  \And 
%  Yang Hu \\
%  Tsinghua University \\
%  Beijing, China \\
%  \texttt{email}
%  \AND
%  Guanya Shi \\
%  California Institute of Technology \\
%  Pasadena, CA, USA \\
%  \texttt{gshi@caltech.edu}
%  \And 
%  Haoyuan Sun \\
%  California Institute of Technology \\
%  Pasadena, CA, USA \\
%  \texttt{hsun2@caltech.edu}
%  \AND 
%  Guannan Qu \\
%  California Institute of Technology \\
%  Pasadena, CA, USA \\
%  \texttt{gqu@caltech.edu}
%  \And 
%  Adam Wierman \\
%  California Institute of Technology \\
%  Pasadena, CA, USA \\
%  \texttt{adamw@caltech.edu}
%}

\begin{document}

\maketitle

\renewcommand{\thefootnote}{\fnsymbol{footnote}}
\footnotetext[2]{Yiheng Lin, Yang Hu, Haoyuan Sun, Guanya Shi, and Guannan Qu contributed equally to this work.}
\renewcommand{\thefootnote}{\arabic{footnote}}

\begin{abstract}
  We study predictive control in a setting where the dynamics are time-varying and linear, and the costs are time-varying and well-conditioned. At each time step, the controller receives the exact predictions of costs, dynamics, and disturbances for the future $k$ time steps. We show that when the prediction window $k$ is sufficiently large, predictive control is input-to-state stable and achieves a dynamic regret of $O(\decayvar^k T)$, where $\decayvar < 1$ is a positive constant. This is the first dynamic regret bound on the predictive control of linear time-varying systems. Under more assumptions on the terminal costs, we also show that predictive control obtains the first competitive bound for the control of linear time-varying systems:  $1 + O(\decayvar^k)$. Our results are derived using a novel proof framework based on a perturbation bound that characterizes how a small change to the system parameters impacts the optimal trajectory.

\end{abstract}

\section{Introduction}
%\yiheng{
%Some message to convey:
%\begin{enumerate}
 %   \item The regret and competitive ratio of MPC has received much attention.
%    \item It is important to study how to leverage predictions.
%    \item MPC exhibits great performance in practice for very general systems. However, the theory of MPC only works for LTI system with quadratic costs. This motivates the generalization of problem setting.
%    \item The generalization is highly untrivial because the cost-to-go and canonical form technique in previous work does not apply to well-conditioned costs and LTV dynamics.
%    \item (Not sure if we want to mention here.) How to leverage predictions in SOCO has been studied intensively. Many previous works try to reduce the optimal control problem to SOCO.
%\end{enumerate}
%}

%\yiheng{We will use $\lambda$ to denote the decay rate in perturbation bound.}

We study the problem of predictive control in a linear time-varying (LTV) system, where the dynamics is given by $x_{t+1} = A_t x_t + B_t u_t + w_t$. Here, $x_t$ is the state, $u_t$ is the control action, and $w_t$ is the disturbance or exogenous input. At each time step $t$, the online controller incurs a time-varying state cost $f_t(x_t)$ and control cost $c_t(u_{t-1})$, and then decides its next control action $u_t$. In deciding $u_t$ the controller makes use of predictions of the next $k$ future disturbances, cost functions, and dynamical matrices, and seeks to minimize its total cost on a finite horizon $T$. Our main results bound the dynamic regret and competitive ratio of predictive controllers in this LTV setting.

Recently, a growing literature has sought to design controllers that achieve learning guarantees such as static regret \cite{agarwal2019logarithmic,simchowitz2020naive}, dynamic regret \cite{li2019online,yu2020power}, and competitive ratio \cite{shi2020online}. 
The most relevant line of work concerns predictive control with learning guarantees, which studies how to leverage the prediction window $k$ to reduce the regret and competitive ratio. This line of work has focused on linear time-invariant (LTI) systems \cite{yu2020power, yu2020competitive, zhang2021regret, li2019online}. 
However, linear time-varying (LTV) systems have received increasing attention in recent years due to their importance in a variety of emerging applications, despite the challenges associated with analysis. For example, in the problem of power grid frequency regulation, the dynamics is determined by the proportion of renewable energy in total power generation, which is time-varying~\cite{gonzalez2019powergrid,qu2021stable}. It is also common to use the LTV systems as an approximation of nonlinear dynamics in predictive control and planning \cite{nocedal2006sequential,morgan2014model,shi2020neural,falcone2007linear}.

%In the context of predictive control, nearly all works requires the cost functions to be quadratic and time-invariant, e.g., \cite{yu2020competitive, yu2020power}, and the case of time-varying quadratic costs has only been studied recently by \cite{zhang2021regret}. Even outside the context of predictive control, while the class of cost functions can be generalized to well-conditioned functions, the dynamics is still restricted to be LTI, e.g., \cite{li2019online,agarwal2019logarithmic}. %\adam{emphasize LTI and quadratic when costs are time varying...highlight that there are very few results with time varying costs and even fewer in LTV}\yiheng{Hi Adam, I'm a little worried about making this claim to the whole field of optimal control with learning guarantees. If I remember correctly, some of Prof. Elad Hazan's work apply to LTV system, but the performance metric is not dyanmic regret. I think we should narrow it down to predictive control, which focuses on how to leverage the prediction window.} 
%\adam{Adjust this to just focus on LTI vs LTV and merge to avoid repetition with the last paragraph before the contributions}
%\haoyuan{merge with the fourth paragraph}

The current lack of progress toward understanding measures like regret and competitive ratio in LTV settings is due to the need for new techniques to generalize the dynamics from LTI to LTV and the costs from quadratic to well-conditioned functions.  Specifically, the proof approaches used in previous studies on regret and competitive ratio of predictive control in LTI dynamics with quadratic costs, e.g., \cite{yu2020power, yu2020competitive, zhang2021regret}, require explicitly writing down the cost-to-go function, optimal control actions, and algorithm's actions as functions of the system parameters. This is very difficult, if not impossible, for general cost functions that do not have a quadratic form. A promising approach that does not require such explicit characterizations is to derive results via reductions from optimal control to online convex optimization with multi-step memory, e.g.,  \cite{li2019online, shi2020online, goel2018smoothed, agarwal2019online, agarwal2019logarithmic}.
However, such reductions usually do not work well for LTV systems due to the need to represent the problem in control canonical form \cite{li2019online, shi2020online}, or due to limitations on the policy class and comparisons to static benchmarks \cite{agarwal2019online, agarwal2019logarithmic}. %Although the reduction method  \cite{agarwal2019online, agarwal2019logarithmic} still works for LTV, the regret guarantee no long holds. Additionally, it is not useful to compare against a static feedback controller in time-varying system.
%\haoyuan{someone check this again.}\yiheng{I removed the two last sentences because I feel it's risky to claim something is useless.}

% \adam{Guanya, can you fill in more examples to motivate LTV}

%\haoyuan{note the progress in control community and how we are different -- filling the hole in learning.}

Perhaps the most prominent approach for controlling LTV systems is Model Predictive Control (MPC), also known as Receding Horizon Control \cite{garcia1989model}. Generally speaking, at each time step, an MPC-style algorithm solves a predictive trajectory for the future $k$ time steps and commit the first control action in this trajectory. MPC-style algorithms are known to work well in practice, even when the dynamics are non-linear and time-varying, e.g., \cite{rosolia2017learning,korda2018linear,allgower2012nonlinear,falcone2007linear}.  
On theoretical side, the asymptotic behaviors of MPC such as stability and convergence have been studied intensively under general assumptions on dynamics and costs \cite{diehl2010lyapunov, angeli2011average, angeli2016theoretical, grune2020economic}. However, non-asymptotic guarantees such as regret and competitive ratio of MPC-style policies have been limited.
Despite recent work providing such guarantees in the context of LTI systems with quadratic costs, e.g., \cite{yu2020power, yu2020competitive, zhang2021regret}, the derivation of regret and competitive ratio results for MPC in LTV systems remains open. 

\textbf{Contributions.}  We provide the first regret and competitive ratio results for a controller in LTV systems with time-varying costs. Specifically, we show that an MPC-style predictive control algorithm (Algorithm \ref{alg:pc}) achieves a dynamic regret that decays exponentially with respect to the length of prediction window $k$ in the LTV system (Theorem \ref{thm:distance-MPC-infty}): $O(\decayvar^k T)$, where the decay rate $\decayvar$ is a positive constant less than $1$. 
This almost matches the exponential lower bound for improvement from predictions in the LTI setting shown in \cite{li2019online} in the sense that, to achieve any target regret level, the required length of prediction $k$ shown by our bound differs from the theoretical lower bound by at most a constant factor. We also show the first competitive bound in LTV systems with time-varying well-conditioned costs (Theorem \ref{thm:new-competitive-ratio}):  $1 + O(\decayvar^k)$, where the decay rate $\decayvar$ is identical with the one in the regret bound.
%This almost matches the exponential lower bound for improvement from predictions in the LTI setting shown in \cite{li2019online}. \adam{mention CR result here too with one sentence saying just that its a variation of the algorithm} 
%\adam{add big-O notation for both regret and CR results, like in abstract}\yiheng{Adam, please check this.}

We develop a novel analysis framework based on a perturbation approach. Specifically, instead of solving for the optimal states and control actions like previous analyses in the LTI setting with quadratic costs, e.g., \cite{yu2020power, zhang2021regret}, we bound how much impact an perturbation to the system parameters can have on the optimal solution. This type of perturbation bound (Theorem \ref{thm:LTV-sensitivity}) can be shown even when the optimal trajectory cannot be written down explicitly, which allows it to be applied in LTV systems with well-conditioned costs. Then, we utilize this perturbation bound to establish results on dynamic regret and the competitive ratio. In addition, we want to emphasize that the perturbation approach we develop is highly modular and extendable. For instance, if a stronger perturbation bound for some specific class of dynamics and/or cost functions can be shown, the dynamic regret of the predictive controller will improve. Similarly, to further generalize the problem setting (e.g., to include additional constraints), one only needs to establish the corresponding perturbation bounds and the regret result will follow.%\yiheng{I removed the sentence about the connection between ISS and regret/CR because it is not clear to explain.}
%Interestingly, the proof technique also highlights a connection between input-to-state stability (ISS) and learning measures such as regret and competitive ratio.  
%The properties derived as part of the perturbation approach also result in a proof of ISS. %The competitive ratio result can also be shown for a slight modification of the predictive control algorithm.

Another important component of the proof is a novel reduction between LTV control and online optimization.  Connections between online optimization and control have received increasing attention in recent years, e.g., \cite{shi2020online, li2019online, goel2018smoothed, agarwal2019online, agarwal2019logarithmic}. Existing reductions rely on the canonical form, which does not apply to LTV systems, and/or formulations of online optimization with memory of multiple prior time steps, which makes the online problem more challenging. The reduction we present here relies on neither, and is thus a fundamentally different approach to connect control and online optimization. Further, this reduction is not specific to the predictive control algorithm we study, and we expect it to prove useful for other controllers in future work.
A limitation of our reduction framework is that it cannot handle state/control constraints. This limitation is shared by previous works \cite{yu2020power, yu2020competitive, zhang2021regret, li2019online}, and represents a challenging open question in the literature.

\section{Background and Setting}
%\subsection{Formulation of the Control Problem}
%\yiheng{I changed all ``control input'' to ``control action'' to make it consistent.}

We consider a finite-horizon discrete-time online control problem with linear time-varying (LTV) dynamics, time-varying costs, and disturbances, namely
\begin{align}\label{equ:online_control_problem}
    \min_{x_{0:T}, u_{0:T-1}} &\sum_{t = 1}^{T} \left(f_{t}(x_t) + c_t(u_{t-1})\right)\nonumber\\*
    \text{ s.t. }&x_t = A_{t-1} x_{t-1} + B_{t-1} u_{t-1} + w_{t - 1}, t = 1, \ldots, T,\\*
    &x_0 = x(0),\nonumber
\end{align}
where $x_t \in \mathbb{R}^n$, $u_t \in \mathbb{R}^m$, and $w_t \in \mathbb{R}^n$ respectively denote the state, the control action, and the disturbance of the system at time steps $t = 1, \ldots, T$, and $x(0) \in \mathbb{R}^n$ is a given initial state. By convention, the hitting cost function $f_t: \mathbb{R}^n \to \mathbb{R}_+$ and control cost function $c_t: \mathbb{R}^m \to \mathbb{R}_+$ are assumed to be time-varying and well-conditioned. Define the tuple $\vartheta_t := (A_{t}, B_{t}, w_{t}, f_{t+1}, c_{t+1})$.

In the classical setting where no predictions are available, after observing state $x_t$ at time step $t$, the algorithm needs to decide the control action $u_t$ before observing $\vartheta_t$, which is an unknown random disturbance input. We use the following event sequence to describe this ordering:
\[x_0, u_0, \vartheta_0, x_1, u_1, \vartheta_1, x_2, \ldots, x_{T-1}, u_{T-1}, \vartheta_{T-1}, x_T.\]
We assume that the algorithm has access to the exact predictions of disturbances, cost functions and dynamical matrices in the future $k$ time steps (which are time-varying); i.e., the event sequence is
\[x_0, \vartheta_0, \vartheta_1, \ldots, \vartheta_{k-1}, u_0, \vartheta_k, u_1, \vartheta_{k+1}, \ldots, u_{T-k-1}, \vartheta_{T-1}, u_{T - k}, u_{T-k+1}, \ldots, u_{T-1}.\]
Here we assume all predictions are \textit{exact}, and leave the case of inexact predictions for future work. This prediction model has been used in previous works like \cite{yu2020power, lin2020online, li2020online, lin2012online}, and is available in many real-world applications such as disturbance estimation in robotics and frequency regulation in power grids. The availability is due to the fact that, in such scenarios as mentioned above, experiments or observations on the dynamics can be conducted repeatedly and consistently,
%and the time-varying system dynamics follow a similar pattern from trial to trial, 
which makes it feasible to train a good predictor based on the data collected from repeated trials.

\subsection{Assumptions}
\label{sec:assumptions}
As is standard in studies of regret and competitive ratio in linear control problems, we assume the cost functions are well-conditioned.  

\begin{assumption}[Well-conditioned Costs]\label{assump:well-condition}
    The cost functions satisfy the following constraints:
    \begin{enumerate}
        \item $f_t(\cdot)$ is $m_f$-strongly convex for $t = 1, \ldots, T$, and $\ell_f$-strongly smooth for $t = 1, \ldots, T-1$.
        \item $c_t(\cdot)$ is both $m_c$-strongly convex and $\ell_c$-strongly smooth for $t = 1, \ldots, T$.
        \item $f_t(\cdot)$ and $c_t(\cdot)$ are twice continuously differentiable for $t = 1, \ldots, T$.
        \item $f_t(\cdot)$ and $c_t(\cdot)$ are non-negative, and $f_t(0) = c_t(0) = 0$ for $t = 1, \ldots, T$. %Thus, $x^* = 0$ is an equilibrium point of the system.
    \end{enumerate}
\end{assumption}
Note that assumptions (1) through (3) are quite common \cite{li2019online, li2020online, goel2018smoothed, goel2019beyond, shi2020online}. Assumption (4) is less common, but can be satisfied via re-parameterization without loss of generality. Specifically, when the minimizers of state cost $f_t$ and control cost $c_t$ are nonzero, we perform the transformation
\begin{gather*}
    x_t' \leftarrow x_t - \argmin_x f_t(x),~ u_t' \leftarrow u_t - \argmin_u c_{t+1}(u),\\
    w_t' \leftarrow w_t + A_t \argmin_x f_t(x) + B_t \argmin_u c_{t+1}(u).
\end{gather*}

Additionally, we need to assume the dynamics are \textit{controllable}.  It is crucial that the dynamical system can be steered from an arbitrary initial state to an arbitrary final state via a finite sequence of admissible control actions. For linear time-invariant (LTI) systems, the full-rankness of the \textit{controllability matrix} completely characterizes the reachability of the state space, which is generally used as a standard assumption for analysis \cite{zhang2021regret, mania2019certainty, aastrom2010feedback}. This can be generalized to parallel  assumptions for LTV systems as follows. We begin with a definition.

\begin{definition}\label{def:controllability}
    For a dynamical system with linear time-varying dynamics
    $x_t = A_{t-1} x_{t-1} + B_{t-1} u_{t-1} + w_{t - 1}, t = 1, \ldots, T,$
    the transition matrix $\Phi(t_2, t_1) \in \mathbb{R}^{n \times n}$ (from time step $t_1$ to $t_2$) is defined as
    \[\Phi(t_2, t_1) := \begin{cases}
        A_{t_2 - 1} A_{t_2 - 2} \cdots A_{t_1} & \text{ if }t_2 > t_1\\
        I & \text{ if } t_2 \leq t_1
    \end{cases},\]
    and the controllability matrix $M(t, p) \in \mathbb{R}^{n \times (mp)}$ is defined as
    \[M(t, p) := \left[\Phi(t+p, t+1) B_t, \Phi(t+p, t+2) B_{t+1}, \ldots, \Phi(t+p, t+p) B_{t+p}\right].\]
    The dynamical system is called controllable if there exists a constant $d \in \mathbb{Z}_+$, such that the controllability matrix $M(t, d)$ is of full row rank for any $t = 1, \ldots, T-d$. The smallest constant $d$ with such property is called the controllability index of the system.
\end{definition}

Given the above definition, we can state the key assumption necessary for the analysis of LTV systems. We use a slightly stronger assumption than being merely controllable, which we refer to as $(d, \sigma)$-uniform controllability.  It is a natural generalization of its counterpart for LTI systems (see Assumption 2 in \cite{mania2019certainty}, where $(d, \sigma)$ is instead named as $(\ell, \nu)$).

\begin{assumption}\label{assump:LTV}
    There exists positive constants $a$, $b$, and $b'$, such that 
    \[\norm{A_t} \leq a,~ \norm{B_t} \leq b,~ \text{and}~\Vert B_t^\dagger\Vert \leq b'\]
    hold for all time steps $t = 0, \ldots, T-1$, where $B_t^\dagger$ denotes the Moore–Penrose inverse of matrix $B_t$. Furthermore, there exists a positive constant $\sigma$ such that
    \[\sigma_{\min} \left( M(t, d) \right) \geq \sigma\]
    holds for all time steps $t = 0, \ldots, T - d$, where $d$ denotes the controllability index.
\end{assumption}

Note that Assumption \ref{assump:LTV} implies $\sigma_{\min}(M(t, p)) \geq \sigma$ for all $p \geq d$ because appending more columns to a matrix with full row rank will not reduce its minimum singular value. 

%\yiheng{Delete something from this paragraph.}
%The LTV setting we consider is more general than the settings which existing results on regret and competitive ratio had assumed.  Previous results on regret bounds and competitive ratio bounds mainly focus on linear time-invariant systems \cite{yu2020power, zhang2021regret, li2019online, agarwal2019logarithmic}.  Further, most previous work considers time-invariant costs \cite{yu2020power, yu2020competitive, goel2020power, cohen2018online}, and those that do consider time-varying costs are limited to quadratic costs \cite{zhang2021regret} \yiheng{Need more cites for time-varying and quadratic.}. Thus, the setting of time-varying dynamics and time-varying, well-conditioned costs is considerably more general than prior work. We highlight the implications of this general setting for enabling applications in the following examples. %ke our results more general than existing ones, which indicates that our theoretical results could be applied to more general real-world systems.

The LTV setting we consider is more general than the settings which existing results on regret and competitive ratio have assumed \cite{yu2020power, zhang2021regret, li2019online, agarwal2019logarithmic}. We highlight the implications of this general setting for enabling applications in the following examples.

\begin{example}[Trajectory tracking in LTV systems with well-conditioned costs]
    Consider a trajectory tracking problem with LTV dynamics and well-conditioned costs, which generalizes the standard linear quadratic tracking problem in \cite{brian2007optcontrol,yu2020power} with LTI dynamics and quadratic costs. We adopt LTV dynamics $x_{t+1} = A_t x_t + B_t u_t + w_t$ and general well-conditioned cost functions $f_t(\cdot), c_t(\cdot)$ (see Assumption \ref{assump:well-condition}). With the desired trajectory $d_{1:T}$, we consider a new state $\tilde{x}_t := x_t - d_t$ and a new disturbance $\tilde{w}_t := w_t + A_t d_t - d_{t+1}$. Thus, using the new state and disturbance, the problem naturally fits into our problem setting with $k$ future predictions of $(A_t, B_t, w_t, f_t, c_t, d_{t+1})$. Note that predictive control with LTV dynamics is practical in nonlinear systems \cite{falcone2008linear,falcone2007linear} because the nonlinearity could be well approximated by LTV models \cite{falcone2008linear}.
    % Note that in this problem, the controller has access to  predictions of the future $k$ disturbances at each time step, so that predictive control can be applied.
\end{example}

\begin{example}[Power grid frequency regulation] 
    Consider the frequency regulation problem in \cite{gonzalez2019powergrid}, where state $x = [\theta^{\top}, \omega^{\top}]^{\top}$ represent the status of a power plant, and power generation $p_{\text{in}} \in \bR^n$ is the control action. The continuous-time dynamics is given by
    \[\underbrace{\begin{bmatrix}
        \dot{\theta} \\ \dot{\omega}
      \end{bmatrix}}_{\dot{x}(t)}
      = \underbrace{\begin{bmatrix}
        0 & I \\
        -M(t)^{-1}L & -M(t)^{-1}D
      \end{bmatrix}}_{\hat{A}(t)} 
      \underbrace{\begin{bmatrix}
        \theta \\ \omega
      \end{bmatrix}}_{x(t)}
      + \underbrace{\begin{bmatrix}
        0 \\ M(t)^{-1}
      \end{bmatrix}}_{\hat{B}(t)}
      \underbrace{p_{\text{in}}}_{u(t)}. \]
    Here $M(t)$ denotes the rotational inertia matrix, which is time-varying and is determined by the proportion of renewable power in total power generation at time $t$, and can be accurately predicted in a certain time horizon \cite{santhosh2020windforecast,nageem2017solarforecast}; $L$ and $D$ are known system parameters.
    Using standard discretization techniques, we can formulate a discrete-time linear time-varying system $x_{t+1} = A_t x_t + B_t u_t + w_t$, where $A_t$ and $B_t$ are determined by $\hat{A}(t)$ and $\hat{B}(t)$. The cost functions are quadratic costs which penalizes frequency deviation \cite{gonzalez2019powergrid}. %, and our framework allows more general time-varying cost functions. 
    This setting fits into our predictive control algorithm, since the controllers have accurate predictions of $A_t$ and $B_t$ in the near future due to predictablity of $M(t)$.
\end{example}

\subsection{Predictive Control}
We study a classical predictive control (PC) algorithm inspired by model predictive control, which solves the optimization problem of $k$ future time steps (where $k$ is called the \textit{prediction window}). specifically, the algorithm receives the dynamics and disturbances of the next $k$ time steps, calculates the optimal solution, and then applies the first control action of the optimal solution. The PC algorithm with prediction window $k$ is denoted as $PC_{k}$.% throughout the paper.

More formally, At time step $t < T - k$, $PC_{k}$ solves the optimization problem $\tilde{\psi}_t^k(x_t, w_{t:t+k-1}; F)$. Since we need to consider horizon lengths other than $k$, for arbitrary $\horizonvar \geq 1$ and time step $t$, we define the optimization problem $\tilde{\psi}_{t}^\horizonvar (x, \zeta; F)$ as
\begin{align}\label{opt:without-terminal}
    \tilde{\psi}_{t}^\horizonvar (x, \zeta; F) := \argmin_{y_{0:\horizonvar}, v_{0:\horizonvar-1}} &\sum_{\tau = 1}^{\horizonvar} f_{t + \tau}(y_\tau) + \sum_{\tau = 1}^{\horizonvar} c_{t + \tau}\left(v_{\tau-1}\right) + F(y_{k})\nonumber\\*
    \text{ s.t. }&y_\tau = A_{t+\tau-1} y_{\tau - 1} + B_{t+\tau-1} v_{\tau-1} + \zeta_{\tau - 1}, \tau = 1, \ldots, \horizonvar,\\*
    &y_0 = x,\nonumber
\end{align}
where $x \in \mathbb{R}^n$ is the initial state, $\zeta \in \left(\mathbb{R}^n\right)^{\horizonvar}$ (indexed by $0, \ldots, \horizonvar-1$) is a sequence of disturbances, and $F: \mathbb{R}^n \to \mathbb{R}$ is a standard terminal cost function regularizing the final state. 
%Here we additionally require that the terminal cost $F$ has the form $F(x) = \alpha(\norm{x})$, where $\alpha: \mathbb{R}_{\geq 0} \to \mathbb{R}_{\geq 0}$ is a convex K-function (i.e. continuous increasing function with 0 at the origin, see Appendix \ref{sec:standard-notations} for definition) that is twice continuously differentiable. \yiheng{Add $F$ will be specified later. For example, in competitive ratio, indicator function.}
We will specify some additional requirements on $F$ later. For example, to derive the competitive ratio result, we require $F$ to be the indicator function of the origin\footnote{When we say the indicator function of the origin in this paper, we refer to a function $g: \mathbb{R}^n \to \mathbb{R}\cup \{+\infty, -\infty\}$ that is defined as $g(0) = 0$ and $g(x) = +\infty$ for any $x \not = 0$}.
For each time step $\tau = 1, \ldots, k$, $y_\tau \in \mathbb{R}^n$ is the predictive state, and $v_\tau \in \mathbb{R}^m$ is the predictive control action. To make the algorithm well-defined, at time step $t = T - k$, $PC_k$ can finish the rest of the trajectory optimally by committing $u_{T-k:T-1} = \tilde{\psi}_{T-k}^k(x_{T-k}, w_{T-k:T-1}; 0)$. We will add a subscript $y_i/v_i$ to the optimal solution vector to denote one of its entries. The pseudo-code of predictive control is given in Algorithm \ref{alg:pc}. 
%\yiheng{I edited this paragraph.}

%\haoyuan{I am using semi-colon to separate the last argument of $\tilde{\psi}$ to emphasis that the 0 is a function, not a constant.}

\begin{algorithm}[t]
\caption{Predictive Control ($PC_{k}$)}\label{alg:pc}
\begin{algorithmic}[1]
\For{$t = 0, 1, \ldots, T-k-1$}
    \State Observe current state $x_t$ and receive predictions $\vartheta_{t:t+k-1}$.
    \State Solve and commit control actions $u_t := \tilde{\psi}_t^k(x_t, w_{t:t+k-1}; F)_{v_0}$.
\EndFor
\State At time step $t = T-k$, observe current state $x_t$ and receive predictions $\vartheta_{t:T-1}$.
\State Solve and commit control actions $u_{t:T-1} := \tilde{\psi}_t^{k}(x_t, w_{t:T-1}; 0)_{v_{0:k-1}}$.
\end{algorithmic}
\end{algorithm}

It is also important in our framework to study the behavior of predictive control under some fixed terminal point.
So, for prediction length $\horizonvar \geq 1$ and time step $t$, we define an auxiliary optimization problem with a strict terminal constraint $y_{\horizonvar} = z$ as follows:
\begin{align}\label{opt:with-terminal}
    \psi_{t}^\horizonvar(x, \zeta, z) := \argmin_{y_{0:\horizonvar}, v_{0:\horizonvar-1}} &\sum_{\tau = 1}^\horizonvar f_{t + \tau}(y_\tau) + \sum_{\tau = 1}^\horizonvar c_{t + \tau}\left(v_{\tau-1}\right)\nonumber\\*
    \text{ s.t. }&y_\tau = A_{t+\tau-1} y_{\tau - 1} + B_{t+\tau-1} v_{\tau-1} + \zeta_{\tau - 1}, \tau = 1, \ldots, \horizonvar,\\*
    & y_0 = x, y_\horizonvar = z,\nonumber
\end{align}
where the optimal value is denoted by 
$\iota_{t}^{\horizonvar}(x, \zeta, z)$. We define this auxiliary optimization problem besides $\tilde{\psi}_{t}^\horizonvar (x, \zeta; F)$ because we need to fix both the initial state and the terminal state, for example, when expressing a sub-trajectory of the offline optimal trajectory as the solution of an optimization problem. $\psi$ also allows us to study the impact of the perturbation at the terminal state on the optimal trajectory directly, which will be useful in the proof of dynamic regret (Theorem \ref{thm:distance-MPC-infty}) and competitive ratio (Theorem \ref{thm:new-competitive-ratio}). In addition, when the terminal cost $F$ is the indicator function of the origin, we can relate $\tilde{\psi}$ to $\psi$ as
$\tilde{\psi}_{t}^\horizonvar (x, \zeta; F) = \psi_{t}^{\horizonvar}(x, \zeta, 0).$
This conversion will be useful in the proof of the competitive ratio result (Theorem \ref{thm:new-competitive-ratio}).
%\yiheng{I edited this paragraph.}
%\haoyuan{Relate this to the previous opt problem. Also note that the use of this auxiliary problem allows us to let $F$ be indicator function.}
%For simplicity, the disturbance sequence $\zeta$ can be omitted when it is clear from the context. Note that this is a special case of the original optimization problem (\ref{opt:without-terminal}), since we can define the terminal cost as $2 \iota_t^k(x, \zeta, z) \textbf{1}_{\{y_{t+k} \neq z\}}$, which guarantees that trajectories with $y_k \neq z$ does not yield optimal value. \yiheng{I think we should remove the last sentence, because it is incorrect if we require the terminal cost to be convex. Explain why it is useful to introduce this notation.} \adam{Can we still say something about the connection between the two?}
%\haoyuan{I would also vote for removing the second to last sentence.}

%\haoyuan{actually, we should also switch $\tilde{\psi}$ and $\psi$ to make the names more natural.} \yiheng{Let's keep it currently. Switching can be done easily if we decide to do so.} \haoyuan{Sure, we can decide on this the last minute.}

Throughout the paper, we use $\{(x_t, u_t)\}_{t=1}^T$ to denote the trajectory of predictive control, and use $\{(x_t^*, u_t^*)\}_{t=1}^T$ to denote the offline optimal trajectory (i.e., the optimal solution of \eqref{equ:online_control_problem}). We also use several standard definitions and notations in linear algebra and optimization, which we detail in Appendix \ref{sec:standard-notations} for clarity. In particular, we use vector 2-norms and induced matrix 2-norms throughout this paper unless otherwise specified.

\section{A Perturbation Approach}
\label{sec:perturbation}

In order to study the regret and competitive ratio of controllers in LTV systems, we develop a new analysis based on a perturbation approach, which we introduce in this section.  This approach is based on developing  bounds on how much the solutions to \eqref{opt:without-terminal} and \eqref{opt:with-terminal} change with respect to perturbations to the initial/terminal states and the disturbance sequence. Our perturbation bounds are related to the concept of \textit{incremental stability} defined in \cite{tran2016incremental}, but not exactly the same because we consider the optimal trajectory in a finite horizon whereas the incremental stability focuses on asymptotic behavior over an infinite horizon. Simply stated, the key to our approach is to derive the perturbation bound in Theorem \ref{thm:LTV-sensitivity}, which states that if the target variable we are concerned with is the $h$-th predictive state/control input, while the perturbation occurs at the $\tau$-th time step, then the impact on the target variable is be exponentially small with respect to the time difference $\abs{h - \tau}$. 
% \yiheng{Try to add more related works here.} 
This result can also be viewed as a special exponential decay property possessed by the derivative of the optimal solution with respect to the system parameters in an optimal control problem. Differentiating optimization problems has been considered in a line of previous works (e.g. \cite{amos2018differentiable, amos2017optnet, agrawal2019differentiable}), but they focus more on how to compute the derivatives rather than the exponential decay property we discuss here.

%\yiheng{Add more related works here.} Our results are also closely related to the line of work on differentiating the optimal solutions of optimization problems \cite{amos2018differentiable, amos2017optnet, agrawal2019differentiable}. The key difference is that our results concern about the properties of the derivatives rather than how to compute the derivatives.

%Our results contributes to the literature on the sensitivity of dynamic optimization [cite two papers].

Proving such a result directly is challenging because of the complexity of the LTV dynamical constraints in \eqref{opt:without-terminal} and \eqref{opt:with-terminal}. Thus, we develop a novel reduction from LTV systems to fully-actuated systems, i.e., systems where the controller can steer the system to any state in the whole space $\mathbb{R}^n$ freely at every time step. This special case is a form of online optimization called \textit{smoothed online convex optimization} (SOCO), and has received considerable attention recently, e.g., \cite{goel2019beyond, lin2020online, argue2020dimension}. 
%The arguments employed in many previous works \cite{shi2020online, agarwal2019logarithmic, li2019online} only apply to time-invariant linear systems and require some generalized forms of SOCO requiring memory on the decision points of many prior time-steps.
%By contrast, out reduction method is much more general, which is applicable to general LTV system and only requires a basic form of SOCO with 1-step memory. 
%Under the formulation of SOCO, at each time step $t$, the controller picks a decision point $x_t$ and incurs a hitting cost $\hat{f}_t(x_t)$ as well as a switching cost $\hat{c}_t(x_t, x_{t-1}, w_{t-1})$.  
%The SOCO problem received considerable attention recently and has many applications in ... \textcolor{purple}{[add cites]}.
%Our reduction differs from previous applications of SOCO because ... and offers several advantages in ... \haoyuan{need related work}
%Our reduction relies on the principle of optimality so that if $y_{0:k}, v_{0, k-1}$ is the optimal solution to $\psi^k_t(x, \xi, z)$, then $y_{i:j}, v_{i, j-1}$ is the optimal solution to $\psi^{j-i}_{t_i}(y_i, \xi_{i:j-1}, y_j)$ for any $0 \le i < j \le k$.
We exploit the controllability of the dynamics to analyze the LTV system in chunks of $d$ time steps.
A sequence of $d$ time steps combined together can be thought as a fully-actuated system and thus we can formulate a SOCO problem, which is $(1/d)$-times as long as the original LTV system. 
%\yiheng{Let's do it briefly here and go into details in Section \ref{sec:perturbation-ltv}.}
%\haoyuan{what do you think of my description here?}\yiheng{It's great!}
In this section, we first show the perturbation bound for SOCO in Section \ref{sec:perturbation-soco}, and then we leverage our reduction to derive a result for general LTV systems in Section \ref{sec:perturbation-ltv}.

\subsection{Smoothed Online Convex Optimization}
\label{sec:perturbation-soco}

The classic setting of SOCO is an online game played by an agent against an adversary: at each time step $t$, the adversary reveals a hitting cost function $\hat{f}_t$, a switching cost function $\hat{c}_t$, and a disturbance (or exogenous input) $\hat{w}_t$. The agent picks a decision point $\hat{x}_t \in \mathbb{R}^n$, and incurs a stage cost of $\hat{f}_t(\hat{x}_t) + \hat{c}_t(\hat{x}_t, \hat{x}_{t-1}, \hat{w}_{t-1})$. The agent seeks to minimize the total cost it incurs throughout the game. The offline optimal cost is defined as the minimum cost if the agent has full knowledge of the costs and disturbances at the start of the game. Instead of analyzing the performance of an online algorithm directly, our focus is on studying how the perturbations of the system parameters (initial state, terminal state, and disturbances) impact the offline optimal solution. These results are critical for deriving the guarantees for predictive control in the online setting in Section \ref{sec:guarantees}.

%Deriving perturbation bounds for SOCO is much easier than LTV systems \eqref{opt:without-terminal} and \eqref{opt:with-terminal}.
To begin, observe that when the initial state $\hat{x}_0$, terminal state $\hat{x}_\horizonvar$, and the disturbances $\hat{w}$ are given, the optimal $\horizonvar$-step trajectory of SOCO can be obtained from the unconstrained optimization problem
\begin{equation}\label{equ:SOCO-opt}
    \hat{\psi}(\hat{x}_0, \hat{w}, \hat{x}_\horizonvar) := \argmin_{\hat{x}_{1:\horizonvar-1}} \sum_{\tau=1}^{\horizonvar-1} \hat{f}_\tau(\hat{x}_\tau) + \sum_{\tau=1}^{\horizonvar} \hat{c}_\tau(\hat{x}_\tau, \hat{x}_{\tau - 1}, \hat{w}_{\tau-1}),
\end{equation}
where the objective is a convex function of the decision variables $\hat{x}_{1:\horizonvar-1}$. Since \eqref{equ:SOCO-opt} is an unconstrained optimization problem, the gradient of its objective equals zero at $\hat{\psi}(\hat{x}_0, \hat{w}, \hat{x}_\horizonvar)$. Using this, we can further show that the directional derivative of $\hat{\psi}(\hat{x}_0, \hat{w}, \hat{x}_\horizonvar)$ along some direction $e$, denoted by $\chi$, satisfies the linear equation $M \chi = \delta$, where symmetric matrix $M$ is the Hessian of the objective and vector $\delta$ is determined by the direction $e$.
%\guannan{I think variational derivative is usualy derivative w.r.t. a function?}\yiheng{Let's change to directional derivative.}
A special structure of the objective of \eqref{equ:SOCO-opt} is that the correlations only occur in two consecutive time steps. This implies that its Hessian $M$ is block tri-diagonal. Such tri-diagonal structure of $M$ has been noted by previous work, e.g. \cite{amos2018differentiable}, and have been leveraged to solve the linear equation $M \chi = \delta$ quickly. In contrast, we focus on the exponential decay phenomena $M^{-1}$ exhibits, i.e., the magnitudes of entries decay exponentially with respect to their distances to the main diagonal \cite{demko1984decay}. Bounding each entry of $\chi = M^{-1} \delta$ separately gives us the following perturbation bound. We state this result formally in Theorem \ref{thm:SOCO-sensitivity}, and its proof can be found in Appendix \ref{Appendix:thm:SOCO-sensitivity}. %\adam{point to a formal proof in the appendix}\yiheng{Done.}

\begin{theorem}\label{thm:SOCO-sensitivity}
Given a tuple $(\hat{x}_0, \hat{w}, \hat{x}_\horizonvar)$ that contains the initial state, the disturbances, and the terminal state in this order, we consider the optimal solution of the SOCO problem
\[\hat{\psi}(\hat{x}_0, \hat{w}, \hat{x}_\horizonvar) := \argmin_{\hat{x}_{1:\horizonvar-1}} \sum_{\tau=1}^{\horizonvar-1} \hat{f}_\tau(\hat{x}_\tau) + \sum_{\tau=1}^{\horizonvar} \hat{c}_\tau(\hat{x}_\tau, \hat{x}_{\tau - 1}, \hat{w}_{\tau-1})\]
indexed by $1, \ldots, \horizonvar-1$. Assume $\hat{f}_\tau: \mathbb{R}^n \to \mathbb{R}$ is $\mu$-strongly convex, $\hat{c}_\tau: \mathbb{R}^n \times \mathbb{R}^n \times \mathbb{R}^r \to \mathbb{R}$ is convex and $\ell$-strongly smooth, and both are twice continuously differentiable for $\tau = 1, \ldots, \horizonvar$, then
\begin{equation*}
    \norm{\hat{\psi}(\hat{x}_0, \hat{w}, \hat{x}_\horizonvar)_h - \hat{\psi}(\hat{x}_0', \hat{w}', \hat{x}_\horizonvar')_h} \leq C_0\left(\decayvar_0^{h-1}\norm{\hat{x}_0 - \hat{x}_0'} + \sum_{\tau=0}^{\horizonvar-1} \decayvar_0^{\abs{h - \tau}-1}\norm{\hat{w}_\tau - \hat{w}_\tau'} + \decayvar_0^{\horizonvar-h-1}\norm{\hat{x}_\horizonvar - \hat{x}_\horizonvar'} \right)
\end{equation*}
for all $1 \leq h \leq \horizonvar-1$, where $C_0 = {(2\ell)}/{\mu}$ and $\decayvar_0 = 1 - 2 \cdot \left(\sqrt{1 + (2\ell/\mu)} + 1\right)^{-1}$.
\end{theorem}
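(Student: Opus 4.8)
The plan is to differentiate the strongly convex, unconstrained problem \eqref{equ:SOCO-opt} through its optimality condition, thereby reducing the claimed perturbation bound to an entrywise decay estimate for the inverse of the objective's Hessian, and then to obtain that decay from a Demko--Moss--Smith-type bound for inverses of banded positive definite matrices.

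First I would set up the differentiation. Write $\Phi(\hat{x}_{1:\horizonvar-1};\,\hat{x}_0,\hat{w},\hat{x}_\horizonvar)$ for the objective of \eqref{equ:SOCO-opt}. Strong convexity of each $\hat{f}_\tau$ together with convexity of each $\hat{c}_\tau$ makes $\Phi$ $\mu$-strongly convex in the decision block, so its minimizer $\hat\psi$ is unique and characterized by $\nabla_{\hat x}\Phi = 0$; since all costs are $C^2$, $\Phi$ is $C^2$ and its Hessian $M := \nabla^2_{\hat x}\Phi$ satisfies $M \succeq \mu I$, hence is invertible, so by the implicit function theorem $\hat\psi$ depends $C^1$ on $(\hat{x}_0,\hat{w},\hat{x}_\horizonvar)$. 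Differentiating the identity $\nabla_{\hat x}\Phi = 0$ along any parameter direction $e$ shows that the directional derivative $\chi$ of $\hat\psi$ in direction $e$ solves the linear system $M\chi = \delta$, where $\delta$ is (minus) the mixed second derivative of $\Phi$ in the decision variables and the perturbed parameter.

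Next I would exploit the structure of this system. Because each summand of $\Phi$ couples at most two consecutive decision blocks, $M$ is block tridiagonal with $n\times n$ blocks. Let $M_f$ be the block-diagonal matrix collecting the hitting-cost Hessians $\nabla^2\hat f_\tau$, so $M_f \succeq \mu I$, and let $N := M - M_f$ be the switching-cost part; $N$ is block tridiagonal, $N \succeq 0$, and grouping odd- and even-indexed switching-cost Hessians (which have disjoint supports within each group) together with $\ell$-strong smoothness gives $0 \preceq N \preceq 2\ell I$. On the right-hand side, $\delta$ is supported on block $1$ for a perturbation of $\hat{x}_0$, on block $\horizonvar-1$ for a perturbation of $\hat{x}_\horizonvar$, and on blocks $\tau,\tau+1$ for a perturbation of $\hat{w}_\tau$; in every case each nonzero block of $\delta$ is a sub-block of some $\nabla^2\hat c_\tau$ and hence has norm at most $\ell\norm e$. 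To get the decay of $M^{-1}$ I would precondition: $M = M_f^{1/2}(I+P)M_f^{1/2}$ with $P := M_f^{-1/2} N M_f^{-1/2} \succeq 0$ and $\norm P \le \norm{M_f^{-1}}\norm N \le 2\ell/\mu$, so that $I \preceq I+P \preceq (1+2\ell/\mu)I$ while $I+P$ remains block tridiagonal (as $M_f^{-1/2}$ is block diagonal). Applying the inverse-decay bound of \cite{demko1984decay} to $I+P$ (block bandwidth one, condition number $\kappa = 1 + 2\ell/\mu$) gives $\norm{((I+P)^{-1})_{ij}} \le C\,\decayvar_0^{\abs{i-j}}$ with $\decayvar_0 = (\sqrt\kappa - 1)/(\sqrt\kappa + 1)$ and a universal constant $C$; then $M^{-1} = M_f^{-1/2}(I+P)^{-1}M_f^{-1/2}$ and $\norm{(M_f^{-1/2})_{ii}} \le \mu^{-1/2}$ yield $\norm{(M^{-1})_{ij}} \le (C/\mu)\,\decayvar_0^{\abs{i-j}}$.

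Finally I would assemble everything: $\norm{\chi_h} = \norm{(M^{-1}\delta)_h} \le \sum_j \norm{(M^{-1})_{hj}}\,\norm{\delta_j}$, which by the two previous paragraphs is at most $C_0\,\decayvar_0^{\,m}\norm e$, where $m$ is the distance from block $h$ to the support of $\delta$ --- namely $h-1$, $\horizonvar - h - 1$, or $\abs{h-\tau}-1$ for a perturbation of $\hat x_0$, $\hat x_\horizonvar$, or $\hat w_\tau$ respectively --- and $C_0 = 2\ell/\mu$ after absorbing the at-most-two nonzero blocks of $\delta$. For arbitrary endpoints, I would parametrize the straight segment $\theta(s)$, $s\in[0,1]$, joining $(\hat{x}_0,\hat{w},\hat{x}_\horizonvar)$ to $(\hat{x}_0',\hat{w}',\hat{x}_\horizonvar')$, write $\hat\psi(\theta(1))_h - \hat\psi(\theta(0))_h = \int_0^1 \frac{d}{ds}\hat\psi(\theta(s))_h\,ds$, apply the per-direction directional-derivative bounds uniformly in $s$ (their constants depend only on $\mu$ and $\ell$), and add the contributions of the $\hat{x}_0$-, $\hat{w}_\tau$-, and $\hat{x}_\horizonvar$-components; this reproduces exactly the right-hand side of Theorem~\ref{thm:SOCO-sensitivity}. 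I expect the decay step to be the main obstacle: a direct bound on $M^{-1}$ would involve the largest eigenvalue of $M$, which is \emph{not} controlled because $\hat f_\tau$ is only assumed strongly convex (not strongly smooth), so the block-diagonal preconditioning by $M_f$ --- which trades that uncontrolled eigenvalue for the ratio $1+2\ell/\mu$ while keeping the matrix banded --- is the crux; verifying differentiability of $\hat\psi$ and uniformity of the constants along the interpolation path is a minor additional point.
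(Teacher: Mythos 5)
Your proposal is correct and follows essentially the same route as the paper: differentiate the first-order optimality condition to get $M\chi=\delta$, exploit the block-tridiagonal structure of $M$, invoke the Demko--Moss--Smith inverse-decay bound after a block-diagonal preconditioning that trades the uncontrolled hitting-cost curvature for the condition number $1+2\ell/\mu$, and integrate along the segment joining the two parameter tuples. The only (cosmetic) difference is how the preconditioning is organized --- you conjugate by the full hitting-cost block diagonal $M_f^{1/2}$, whereas the paper keeps $\mu I$ with the banded part and absorbs the excess block-diagonal curvature into a generalized version of Demko's lemma (its Lemma B.1); also note that to recover the constant $C_0=2\ell/\mu$ exactly for the $\hat{w}_\tau$ perturbations one should bound the stacked two-block column of $\delta$ and the corresponding block row of $M^{-1}$ jointly, as the paper does, rather than summing the two blocks separately.
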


As a remark, we do not require the hitting cost $\hat{f}_\tau$ to be strongly smooth, or the switching cost $\hat{c}_\tau$ to be strongly convex in Theorem \ref{thm:SOCO-sensitivity}. This makes the assumptions on the SOCO costs $\hat{f}_\tau, \hat{c}_\tau$ weaker than the assumptions on the LTV costs $f_\tau, c_\tau$ defined in \eqref{equ:online_control_problem}.

%\yang{To distinguish between LTV and SOCO more sharply, I suggest that everything in SOCO should appear with a hat, e.g. $\hat{x}$, $\hat{w}$, etc.}
\subsection{Linear Time-Varying System}
\label{sec:perturbation-ltv}

We now build upon the SOCO perturbation result to derive a perturbation result for LTV systems.  In particular, we show an exponentially-decaying perturbation bound for our LTV system by reducing it to SOCO and apply Theorem \ref{thm:SOCO-sensitivity}. As we have discussed, LTV systems are more difficult than SOCO because the dynamics prevent the online agent from picking the next state $x_{t+1}$ freely at a given state $x_{t}$. 
We overcome this obstacle by redefining the decision points as illustrated in Figure~\ref{fig:reduction}. Specifically, given state $x_t$ at time step $t$ as the last decision point, we then ask the online agent to decide state $x_{t+d}$ at time step $(t+d)$ rather than $x_{t+1}$ at time step $(t+1)$.

Since $d$ is the controllability index, $x_{t+d}$ can be picked freely from the whole space $\mathbb{R}^n$ regardless of $x_t$. 
We also utilize the 
%\haoyuan{should we turn this into a lemma? we used this everywhere and probably worth mentioning it formally}
\textit{principle of optimality}, e.g. if $y_{0:k}, v_{0:k-1}$ is the optimal solution to $\psi^k_t(x, \xi, z)$, then $y_{i:j}, v_{i:j-1}$ is the optimal solution to $\psi^{j-i}_{t + i}(y_i, \xi_{i:j-1}, y_j)$ for any $0 \le i < j \le k$.
Therefore, the trajectory between time $t$ and $(t+d)$ can be recovered by solving $\psi_t^d(x_t, w_{t:t+d-1}, x_{t+d})$.
So we are able to formulate a valid SOCO problem on the sequence of time steps $t, t+d, t+2d, \dots$.

Naturally, the hitting cost at time step $(t+d)$ remains the same, while the switching cost becomes $\xi_t^{d}(x_t, w_{t:t+d-1}, x_{t+d})$, where the function $\xi_t^\horizonvar$ is defined as
\begin{equation}\label{equ:switching-cost}
    \xi_{t}^\horizonvar(x, \zeta, z) := \iota_t^\horizonvar(x, \zeta, z) - f_{t+\horizonvar}(z).
\end{equation}
An illustration of the reduction can be found in Figure \ref{fig:reduction}. Unlike the switching costs in \cite{goel2019beyond, goel2018smoothed,chen2018smoothed, argue2020dimension} which are explicitly defined as the $\ell_2$-distance or squared $\ell_2$-distance, the switching cost $\xi_t^p$ here is defined implicitly as the optimal value of an optimization problem. Lemma \ref{lemma:implicit-switching-cost} shows that the switching cost defined in \eqref{equ:switching-cost} satisfies the requirements of Theorem \ref{thm:SOCO-sensitivity}, which allows us to obtain the desired perturbation bound.

\vspace{-1em}
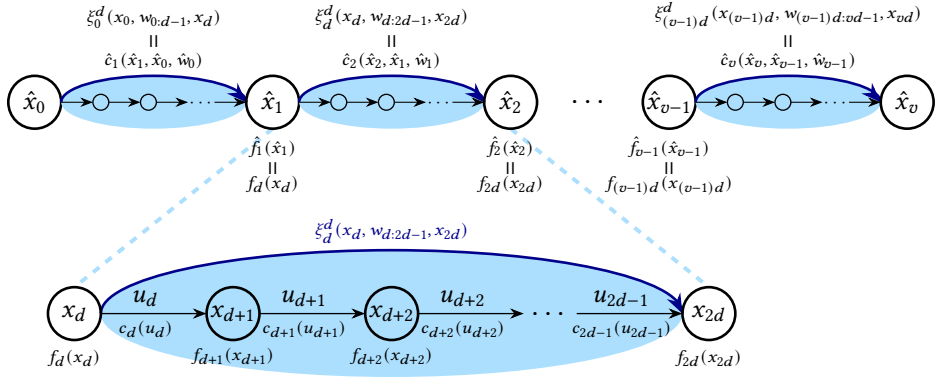
\begin{figure}[htbp]
    \centering
    \begin{tikzpicture}
    \draw[draw=figblue!50!white, line width=1.5pt, dashed] (90pt, 70pt) -- (15pt, 10pt);
    \draw[draw=figblue!50!white, line width=1.5pt, dashed] (180pt, 70pt) -- (255pt, 10pt);

    \begin{scope}[xshift=15pt]
        \fill[figblue!50!white] (120pt, 0pt) circle (110pt and 24pt);
        \draw[DarkBlue, line width=1pt] (10pt, 0pt) arc (180:0:110pt and 24pt);
        \draw[DarkBlue, line width=1pt, ->, >={Stealth}] (228pt, 4pt) -- (230pt, 0pt);
        \node[above=-1pt] at (120pt, 24pt) {\tiny \color{DarkBlue} $\xi_{d}^{d}(x_d, w_{d:2d-1}, x_{2d})$};
        
        \draw[draw=black, line width=1pt] (0pt, 0pt) circle (10pt); 
        \node at (0pt, 0pt) {\small $x_d$};
        \node[below] at (0pt, -10pt) {\tiny $f_d(x_d)$};
        
        \draw[draw=black, line width=0.5pt, ->, >={Stealth}] (10pt, 0pt) -- (50pt, 0pt);
        \node[above=-1pt] at (27pt, 0pt) {\small $u_d$};
        \node[below=-1pt] at (27pt, 0pt) {\tiny \color{black} $c_d(u_d)$};
        
        \draw[draw=black, line width=1pt] (60pt, 0pt) circle (10pt); 
        \node at (60pt, 0pt) {\small $x_{d+1}$};
        \node[below=-1pt] at (60pt, -10pt) {\tiny \color{black} $f_{d+1}(x_{d+1})$};
        
        \draw[draw=black, line width=0.5pt, ->, >={Stealth}] (70pt, 0pt) -- (110pt, 0pt);
        \node[above=-1pt] at (87pt, 0pt) {\small $u_{d+1}$};
        \node[below=-1pt] at (87pt, 0pt) {\tiny \color{black} $c_{d+1}(u_{d+1})$};
        
        \draw[draw=black, line width=1pt] (120pt, 0pt) circle (10pt); 
        \node at (120pt, 0pt) {\small $x_{d+2}$};
        \node[below=-1pt] at (120pt, -10pt) {\tiny \color{black} $f_{d+2}(x_{d+2})$};
        
        \draw[draw=black, line width=0.5pt, ->, >={Stealth}] (130pt, 0pt) -- (170pt, 0pt);
        \node[above=-1pt] at (147pt, 0pt) {\small $u_{d+2}$};
        \node[below=-1pt] at (147pt, 0pt) {\tiny \color{black} $c_{d+2}(u_{d+2})$};
        
        \node at (180pt, -1pt) {$\cdots$};
        
        \draw[draw=black, line width=0.5pt, ->, >={Stealth}] (190pt, 0pt) -- (230pt, 0pt);
        \node[above=-1pt] at (207pt, 0pt) {\small $u_{2d-1}$};
        \node[below=-1pt] at (207pt, 0pt) {\tiny \color{black} $c_{2d-1}(u_{2d-1})$};
        
        \draw[draw=black, line width=1pt] (240pt, 0pt) circle (10pt); 
        \node at (240pt, 0pt) {\small $x_{2d}$};
        \node[below] at (240pt, -10pt) {\tiny $f_{2d}(x_{2d})$};
    \end{scope}
    
    \begin{scope}[yshift=80pt]
        \begin{scope}[xshift=0pt]
            \draw[line width=1pt] (0pt, 0pt) circle (10pt); 
            \node at (0pt, 0pt) {\small $\hat{x}_0$};
            
            \fill[figblue!50!white] (45pt, 0pt) circle (35pt and 10pt);
            \draw[DarkBlue, line width=1pt] (10pt, 0pt) arc (180:0:35pt and 10pt);
            \draw[DarkBlue, line width=1pt, ->, >={Stealth}] (77pt, 3pt) -- (80pt, 0pt);
            \node[above=-1pt] at (45pt, 10pt) {\tiny $\hat{c}_1(\hat{x}_1, \hat{x}_0, \hat{w}_0)$};
            \node[above=7pt] at (45pt, 10pt) {\small $\shortparallel$};
            \node[above=14pt] at (45pt, 10pt) {\tiny $\xi_{0}^{d}(x_0,w_{0:d-1},x_{d})$};
            
            \draw[draw=black, line width=0.4pt, ->, >={Stealth}] (10pt, 0pt) -- (22pt, 0pt);
            \draw[draw=black, line width=0.5pt] (25pt, 0pt) circle (3pt);
            \draw[draw=black, line width=0.4pt, ->, >={Stealth}] (28pt, 0pt) -- (40pt, 0pt);
            \draw[draw=black, line width=0.5pt] (43pt, 0pt) circle (3pt);
            \draw[draw=black, line width=0.4pt, ->, >={Stealth}] (46pt, 0pt) -- (58pt, 0pt);
            \node at (63pt, -1pt) {\tiny $\cdots$};
            \draw[draw=black, line width=0.4pt, ->, >={Stealth}] (68pt, 0pt) -- (80pt, 0pt);
        \end{scope}
        
        \begin{scope}[xshift=90pt]
            \draw[line width=1pt] (0pt, 0pt) circle (10pt); 
            \node at (0pt, 0pt) {\small $\hat{x}_1$};
            \node[below=-1pt] at (0pt, -10pt) {\tiny $\hat{f}_1(\hat{x}_1)$};
            \node[below=9pt] at (0pt, -10pt) {\small $\shortparallel$};
            \node[below=14pt] at (0pt, -10pt) {\tiny $f_{d}(x_{d})$};
            
            \fill[figblue!50!white] (45pt, 0pt) circle (35pt and 10pt);
            \draw[DarkBlue, line width=1pt] (10pt, 0pt) arc (180:0:35pt and 10pt);
            \draw[DarkBlue, line width=1pt, ->, >={Stealth}] (77pt, 3pt) -- (80pt, 0pt);
            \node[above=-1pt] at (45pt, 10pt) {\tiny $\hat{c}_2(\hat{x}_2, \hat{x}_1, \hat{w}_1)$};
            \node[above=7pt] at (45pt, 10pt) {\small $\shortparallel$};
            \node[above=14pt] at (45pt, 10pt) {\tiny $\xi_{d}^{d}(x_{d},w_{d:2d-1},x_{2d})$};
            
            \draw[draw=black, line width=0.4pt, ->, >={Stealth}] (10pt, 0pt) -- (22pt, 0pt);
            \draw[draw=black, line width=0.5pt] (25pt, 0pt) circle (3pt);
            \draw[draw=black, line width=0.4pt, ->, >={Stealth}] (28pt, 0pt) -- (40pt, 0pt);
            \draw[draw=black, line width=0.5pt] (43pt, 0pt) circle (3pt);
            \draw[draw=black, line width=0.4pt, ->, >={Stealth}] (46pt, 0pt) -- (58pt, 0pt);
            \node at (63pt, -1pt) {\tiny $\cdots$};
            \draw[draw=black, line width=0.4pt, ->, >={Stealth}] (68pt, 0pt) -- (80pt, 0pt);
        \end{scope}
        
        \draw[line width=1pt] (180pt, 0pt) circle (10pt); 
        \node at (180pt, 0pt) {\small $\hat{x}_2$};
        \node[below=-1pt] at (180pt, -10pt) {\tiny $\hat{f}_2(\hat{x}_2)$};
        \node[below=9pt] at (180pt, -10pt) {\small $\shortparallel$};
        \node[below=14pt] at (180pt, -10pt) {\tiny $f_{2d}(x_{2d})$};
        
        \node at (210pt, -1pt) {$\cdots$};
        
        \draw[line width=1pt] (240pt, 0pt) circle (10pt); 
        \node at (240pt, 0pt) {\small $\hat{x}_{v-1}$};
        \node[below=-1pt] at (240pt, -10pt) {\tiny $\hat{f}_{v-1}(\hat{x}_{v-1})$};
        \node[below=9pt] at (240pt, -10pt) {\small $\shortparallel$};
        \node[below=14pt] at (240pt, -10pt) {\tiny $f_{(v-1)d}(x_{(v-1)d})$};
        
        \begin{scope}[xshift=250pt]
            \fill[figblue!50!white] (35pt, 0pt) circle (35pt and 10pt);
            \draw[DarkBlue, line width=1pt] (0pt, 0pt) arc (180:0:35pt and 10pt);
            \draw[DarkBlue, line width=1pt, ->, >={Stealth}] (67pt, 3pt) -- (70pt, 0pt);
            \node[above=-1pt] at (35pt, 10pt) {\tiny $\hat{c}_v(\hat{x}_v, \hat{x}_{v-1}, \hat{w}_{v-1})$};
            \node[above=7pt] at (35pt, 10pt) {\small $\shortparallel$};
            \node[above=14pt] at (35pt, 10pt) {\tiny $\xi_{(v-1)d}^{d}(x_{(v-1)d},w_{(v-1)d:vd-1},x_{vd})$};
            
            \draw[draw=black, line width=0.4pt, ->, >={Stealth}] (0pt, 0pt) -- (12pt, 0pt);
            \draw[draw=black, line width=0.5pt] (15pt, 0pt) circle (3pt);
            \draw[draw=black, line width=0.4pt, ->, >={Stealth}] (18pt, 0pt) -- (30pt, 0pt);
            \draw[draw=black, line width=0.5pt] (33pt, 0pt) circle (3pt);
            \draw[draw=black, line width=0.4pt, ->, >={Stealth}] (36pt, 0pt) -- (48pt, 0pt);
            \node at (53pt, -1pt) {\tiny $\cdots$};
            \draw[draw=black, line width=0.4pt, ->, >={Stealth}] (58pt, 0pt) -- (70pt, 0pt);
            
            \draw[line width=1pt] (80pt, 0pt) circle (10pt); 
            \node at (80pt, 0pt) {\small $\hat{x}_v$};
        \end{scope}
    \end{scope}
\end{tikzpicture}
    \caption{
    Illustration of the reduction from LTV to SOCO. Here we consider a simple example where $t=0$ and $\horizonvar = vd$. At time step $0$, the agent cannot steer the system to an arbitrary target state at the next time step due to dynamical constraints. However, given $(d, \sigma)$-uniform controllability, the controller is able to enforce an arbitrary target state after $d$ time steps, which prompts the transformation to a SOCO problem with a decision point in every $d$ time steps.
    %\haoyuan{add bit of annotations the directly relate the parts to SOCO equation (4). $\equiv \hat{c}$ Two Figures, one for SOCO (1a), one for LTV (1b). And make sure the notations match with that of the proof}
    %\haoyuan{the figure can be a little longer to fit bigger fonts?}
    }
    %\yiheng{Summarize notation difference. Alignment.}
    \label{fig:reduction}
\end{figure}

\begin{lemma}\label{lemma:implicit-switching-cost}
Under Assumption \ref{assump:well-condition} and \ref{assump:LTV}, for integer $\horizonvar \geq d$, we have
\begin{enumerate}
    \item $\psi_{t}^\horizonvar (x, \zeta, z)$ is $L_1(\horizonvar)$-Lipschitz in $(x, \zeta, z)$;
    \item $\xi_{t}^\horizonvar(x, \zeta, z)$ is convex and $L_2(\horizonvar)$-strongly smooth in $(x, \zeta, z)$.
\end{enumerate}
Here $L_1(\horizonvar) = C(\horizonvar)\left(1 + \ell \cdot C(\horizonvar)/m_c\right), L_2(\horizonvar) = \ell \cdot C(\horizonvar)^2 + {\ell^2\cdot C(\horizonvar)^4}/{m_c},$
where $\ell = \max(\ell_f, \ell_c)$,
\begin{equation*}
    C(\horizonvar) = \begin{cases}
    O(a^{3\horizonvar}) & \text{ if } a > 1;\\
    O(\horizonvar^2) & \text{ if } a = 1;\\
    O(1) & \text{ if } a < 1.
    \end{cases}
\end{equation*}
\end{lemma}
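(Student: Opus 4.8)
The plan is to eliminate the state variables from \eqref{opt:with-terminal} using the dynamics, which turns $\psi_t^\horizonvar(x,\zeta,z)$ into a strongly convex minimization over the stacked control vector $\bar v := (v_0;\dots;v_{\horizonvar-1})$ subject to a single affine equality constraint whose coefficient matrix is exactly the controllability matrix. Unrolling the recursion gives $y_\tau = \Phi(t+\tau,t)\,x + \sum_{j=1}^{\tau}\Phi(t+\tau,t+j)\left(B_{t+j-1}v_{j-1}+\zeta_{j-1}\right)$, so the objective of \eqref{opt:with-terminal} becomes a function of $\bar v$ (and of $(x,\zeta)$), while the terminal condition $y_\horizonvar = z$ becomes $M(t,\horizonvar)\,\bar v = r(x,\zeta,z)$ with $r(x,\zeta,z) = z - \Phi(t+\horizonvar,t)x - \sum_{j=1}^{\horizonvar}\Phi(t+\horizonvar,t+j)\zeta_{j-1}$, which is affine in $(x,\zeta,z)$ with coefficient norms bounded via $\norm{\Phi(t_2,t_1)}\le a^{\abs{t_2-t_1}}$. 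Because $\horizonvar\ge d$, Assumption \ref{assump:LTV} gives $\sigma_{\min}\!\left(M(t,\horizonvar)\right)\ge\sigma$, so the constraint is uniformly well-conditioned, and everything reduces to a quantitative sensitivity analysis of a strongly convex program with a full-row-rank affine constraint, with all constants tracked uniformly in $t$.

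\textbf{Part 1 (Lipschitzness of $\psi_t^\horizonvar$).} I would split the feasible set as $\bar v = M(t,\horizonvar)^{\dagger} r(x,\zeta,z) + Pw$, where $P$ is an isometric basis of $\ker M(t,\horizonvar)$ and $w$ is free; the particular term $M(t,\horizonvar)^{\dagger}r(x,\zeta,z)$ is $\left(\sigma^{-1}\norm{\partial r/\partial(x,\zeta,z)}\right)$-Lipschitz in $(x,\zeta,z)$. The reduced objective in $w$ is strongly convex with modulus at least $m_c$ — the $c_{t+\tau}$ terms contribute $m_c I$ on $\ker M(t,\horizonvar)$ and the convex $f_{t+\tau}(y_\tau(\bar v))$ terms only add a PSD piece — and strongly smooth with modulus $O\!\left(\ell\cdot C(\horizonvar)^2\right)$, the amplification coming from composing the $\ell$-smooth costs with the affine maps $\bar v\mapsto y_\tau$ whose gains are controlled by $\norm{\Phi}$ and $b$. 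Then the standard sensitivity bound for minimizing a strongly convex function over a fixed subspace when the anchor point moves — obtained by differentiating the first-order condition $P^\top\nabla(\cdot)=0$, giving a displacement bounded by $(\text{smoothness}/\text{strong convexity})\times(\text{anchor displacement})$ — controls $w^\star$, and hence $\bar v^\star$, as a function of $(x,\zeta,z)$; propagating through the (affine, bounded-gain) map $\bar v\mapsto (y_1,\dots,y_\horizonvar)$ recovers Lipschitzness of the whole trajectory $\psi_t^\horizonvar(x,\zeta,z)=(y^\star,v^\star)$. The stated constant $L_1(\horizonvar)=C(\horizonvar)\left(1+\ell C(\horizonvar)/m_c\right)$ then appears with the first factor bundling the affine gains ($a^{O(\horizonvar)}$, $b$, $\sigma^{-1}$) and the second the optimization sensitivity $\ell/m_c$; $C(\horizonvar)$ is chosen large enough to absorb the polynomial bookkeeping, and its trichotomy is precisely that of $\max_{0\le j\le\horizonvar}a^{j}$ and of sums of such terms over windows of length $\le\horizonvar$ — exponential when $a>1$, polynomial when $a=1$, bounded when $a<1$.

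\textbf{Part 2 (convexity and strong smoothness of $\xi_t^\horizonvar$).} Since $y_\horizonvar$ is pinned to $z$, the term $f_{t+\horizonvar}(y_\horizonvar)=f_{t+\horizonvar}(z)$ in $\iota_t^\horizonvar$ is constant in the optimization, so $\xi_t^\horizonvar(x,\zeta,z)=\iota_t^\horizonvar(x,\zeta,z)-f_{t+\horizonvar}(z)$ is the optimal value of \eqref{opt:with-terminal} with that term deleted, i.e., the partial minimization over $(y_{1:\horizonvar-1},v_{0:\horizonvar-1})$ of a function jointly convex in $(y_{1:\horizonvar-1},v_{0:\horizonvar-1},x,\zeta,z)$ (convex objective plus indicator of an affine set); hence $\xi_t^\horizonvar$ is convex, and finite everywhere because $\horizonvar\ge d$ makes every terminal state reachable. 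For strong smoothness I would use the envelope/sensitivity theorem: $\nabla_{(x,\zeta,z)}\iota_t^\horizonvar$ equals the $(x,\zeta,z)$-partial gradient of the Lagrangian of \eqref{opt:with-terminal} evaluated at the optimal primal–dual pair, an affine function of $(v^\star,\lambda^\star)$ with coefficients of norm $\le C(\horizonvar)$ (again from the $\Phi$ matrices). Both $v^\star$ (by Part 1) and the multiplier $\lambda^\star$ (by the same reduced-KKT computation, picking up an extra $\sigma^{-1}C(\horizonvar)$) are Lipschitz in $(x,\zeta,z)$, so $\nabla\iota_t^\horizonvar$, and therefore $\nabla\xi_t^\horizonvar$, is Lipschitz with constant of the claimed order $L_2(\horizonvar)=\ell C(\horizonvar)^2+\ell^2 C(\horizonvar)^4/m_c$. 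Twice continuous differentiability of the costs (Assumption \ref{assump:well-condition}(3)), together with the non-degeneracy of the KKT system guaranteed by strong convexity and $\sigma_{\min}(M(t,\horizonvar))\ge\sigma$, makes the primal–dual solution $C^1$ by the implicit function theorem, so $\iota_t^\horizonvar$ is $C^2$ and these gradient manipulations are legitimate globally.

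\textbf{Main obstacle.} The conceptual skeleton above is routine; the real difficulty is entirely in the quantitative bookkeeping. The crux is a clean sensitivity lemma for a strongly convex program with a well-conditioned affine constraint that yields the precise $\ell/m_c$ and $\sigma^{-1}$ dependence while correctly handling the restriction to $\ker M(t,\horizonvar)$ (whose geometry must be controlled only through $\sigma_{\min}(M(t,\horizonvar))\ge\sigma$, without assuming anything about the shape of the null space), and then the careful propagation of these constants through the unrolled state map and the envelope formula so as to land exactly on $L_1(\horizonvar)$ and $L_2(\horizonvar)$ — including the case analysis on $a$ that produces $C(\horizonvar)$. Ensuring every constant is uniform in the time index $t$, which is what lets the perturbation bound in Theorem \ref{thm:LTV-sensitivity} be applied window-by-window along the horizon, is the step that elevates this beyond a textbook sensitivity estimate.
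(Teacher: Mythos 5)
Your Part 1 follows essentially the same route as the paper: eliminate the states via the unrolled dynamics, write the terminal constraint as $M(t,\horizonvar)\bar v = r(x,\zeta,z)$, parametrize the feasible set as $M(t,\horizonvar)^{\dagger}r(x,\zeta,z) + Vw$ with $V$ an orthonormal basis of $\ker M(t,\horizonvar)$, observe that the reduced objective is $m_c$-strongly convex in $w$ (precisely because $\nabla_w^2 \succeq V^\top \nabla_u^2 G\, V \succeq m_c I$ by orthonormality) and $\ell\,C(\horizonvar)^2$-smooth by composition with the bounded affine maps, and then invoke a sensitivity bound of the form (smoothness)/(strong convexity) for the minimizer. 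The ``clean sensitivity lemma'' you flag as the main obstacle is exactly what the paper isolates and proves (its Lemma C.1): for $f(x,y)$ convex, $L$-smooth, and $\mu$-strongly convex in $y$, the argmin $y^*(x)$ is $(L/\mu)$-Lipschitz and the value function is $(L+L^2/\mu)$-smooth; the proof needs only the stationarity condition $\nabla_y f(x,y^*(x))=0$ and the monotonicity inequality for strongly convex gradients, so no differentiation of the argmin map or control of the null-space geometry beyond $\sigma_{\min}(M)\ge\sigma$ is required. Where you genuinely diverge is Part 2: you propose proving smoothness of $\xi_t^\horizonvar$ via the envelope theorem applied to the Lagrangian of the \emph{constrained} problem, which forces you to establish Lipschitzness of the multiplier $\lambda^*$ and invoke nondegeneracy of the KKT system plus the implicit function theorem. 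The paper avoids all of this: after the kernel reparametrization the problem is unconstrained in $w$, so $\nabla_{(x,\zeta,z)}\,\xi_t^\horizonvar$ is just the partial gradient of the composed objective at $(\cdot, w^*(\cdot))$, and the $L+L^2/\mu$ constant falls out of the Lipschitzness of $w^*$ already established in Part 1 — no dual variables appear. Your route should work but costs extra machinery and an extra $\sigma^{-1}C(\horizonvar)$ factor you would then need to argue away to land on the stated $L_2(\horizonvar)=\ell C(\horizonvar)^2+\ell^2C(\horizonvar)^4/m_c$; since you have already built the unconstrained parametrization for Part 1, reusing it for Part 2 is both simpler and sharper. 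Your convexity argument (partial minimization of a jointly convex function over an affine-parametrized feasible set) and the trichotomy for $C(\horizonvar)$ from $\norm{\Phi(t_2,t_1)}\le a^{t_2-t_1}$ match the paper.
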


In Lemma \ref{lemma:implicit-switching-cost}, we use $O(\cdot)$ to hide quantities $a, b$, and $1/\sigma$; the precise expression of $C(\horizonvar)$ and the proof of Lemma \ref{lemma:implicit-switching-cost} can be found in Appendix \ref{Appendix:lemma:implicit-switching-cost}. Using the reduction from LTV to SOCO, we obtain a perturbation bound for the LTV systems \eqref{opt:without-terminal} and \eqref{opt:with-terminal} in Theorem \ref{thm:LTV-sensitivity}, the proof of which is deferred to Appendix \ref{Appendix:thm:LTV-sensitivity}.

%\yiheng{Perturbation bound or incremental stability. Choose one.} \yang{Vote for incremental stability.} \adam{I'm fine with either.}
%\haoyuan{another vote for incremental stability, it is less direct but more intuitive}
%\yiheng{We have decided to use perturbation bound instead of incremental stability to refer to the results in Theorem \ref{thm:LTV-sensitivity}. We need to be consistent throughout the paper.}

\begin{theorem}\label{thm:LTV-sensitivity}
Consider the optimization problem defined in \eqref{opt:without-terminal} and \eqref{opt:with-terminal} and with a horizon length $\horizonvar \ge d$. Suppose the terminal cost $F$ is either the indicator function of the origin, or a non-negative convex function that is twice continuously differentiable and satisfies $F(0) = 0$.
Under Assumptions \ref{assump:well-condition} and \ref{assump:LTV}, given any $(x, \zeta, z)$ and $(x', \zeta', z')$,
\begin{align*}
    \norm{\tilde{\psi}_{t}^\horizonvar(x, \zeta; F)_{y_h} - \tilde{\psi}_{t}^\horizonvar (x', \zeta'; F)_{y_h}} &\leq C \left(\decayvar^h \norm{x - x'} + \sum_{\tau = 0}^{\horizonvar-1}\decayvar^{\abs{h - \tau}} \norm{\zeta_\tau - \zeta_\tau'}\right)\\
    \norm{\psi_{t}^\horizonvar(x, \zeta, z)_{y_h} - \psi_{t}^\horizonvar (x', \zeta', z')_{y_h}} &\leq C \left(\decayvar^h \norm{x - x'} + \sum_{\tau = 0}^{\horizonvar-1}\decayvar^{\abs{h - \tau}} \norm{\zeta_\tau - \zeta_\tau'} + \decayvar^{\horizonvar-h} \norm{z - z'}\right)
\end{align*}
hold for all time steps $t$. Here we define $L_0 = \max_{d \leq \horizonvar \leq 2d-1}L_2(\horizonvar)$, and the constants are given by
\begin{align*}
    \decayvar = \left(1 - 2\left(\sqrt{1 + (2L_0/m_c)} + 1\right)^{-1}\right)^{\frac{1}{2d - 1}}, C = \frac{2L_0}{m_c}\cdot \left(1 - 2\left(\sqrt{1 + (2L_0/m_c)} + 1\right)^{-1}\right)^{-1}.
\end{align*}
\end{theorem}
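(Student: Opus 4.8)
The plan is to prove Theorem~\ref{thm:LTV-sensitivity} by reducing the LTV problems \eqref{opt:without-terminal} and \eqref{opt:with-terminal} to the SOCO problem \eqref{equ:SOCO-opt} and invoking Theorem~\ref{thm:SOCO-sensitivity}. Fix the start time $t$ and a horizon $\horizonvar \ge d$. First I would partition the $\horizonvar$ steps into $v$ consecutive blocks with boundaries $0 = \horizonvar_0 < \horizonvar_1 < \cdots < \horizonvar_v = \horizonvar$, chosen so that every block length $\horizonvar_j - \horizonvar_{j-1}$ lies in $[d, 2d-1]$ (all blocks of length $d$, except the last one, which absorbs the remainder $\horizonvar \bmod d$). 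Writing $\hat{x}_j := y_{\horizonvar_j}$ for the boundary states, the principle of optimality shows that for \emph{any} fixed boundary states the optimal within-block piece of the trajectory is $\psi_{t+\horizonvar_{j-1}}^{\horizonvar_j-\horizonvar_{j-1}}(\hat{x}_{j-1}, w_{t+\horizonvar_{j-1}:t+\horizonvar_j-1}, \hat{x}_j)$, with cost $\iota_{t+\horizonvar_{j-1}}^{\horizonvar_j-\horizonvar_{j-1}} = \xi_{t+\horizonvar_{j-1}}^{\horizonvar_j-\horizonvar_{j-1}} + f_{t+\horizonvar_j}(\hat{x}_j)$. Summing over blocks, the optimal choice of $\hat{x}_1, \dots, \hat{x}_{v-1}$ is the solution of an instance of \eqref{equ:SOCO-opt} with horizon $v$, hitting costs $\hat{f}_j := f_{t+\horizonvar_j}$, switching costs $\hat{c}_j := \xi_{t+\horizonvar_{j-1}}^{\horizonvar_j - \horizonvar_{j-1}}$, and disturbances $\hat{w}_{j-1} := w_{t+\horizonvar_{j-1}:t+\horizonvar_j - 1}$; here $(d,\sigma)$-uniform controllability (Assumption~\ref{assump:LTV}) is exactly what guarantees that $\hat{x}_j$ is a free decision variable given $\hat{x}_{j-1}$, so this is a legitimate SOCO instance (cf.\ Figure~\ref{fig:reduction}).

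Next I would check the hypotheses of Theorem~\ref{thm:SOCO-sensitivity}. The hitting costs $\hat{f}_j = f_{t+\horizonvar_j}$ are strongly convex and twice continuously differentiable by Assumption~\ref{assump:well-condition}, and Lemma~\ref{lemma:implicit-switching-cost} gives precisely what is needed for the switching costs: each $\hat{c}_j$ is convex and $L_2(\horizonvar_j-\horizonvar_{j-1})$-strongly smooth, hence $L_0$-strongly smooth with $L_0 = \max_{d \le \horizonvar \le 2d-1} L_2(\horizonvar)$ since every block has length in $[d, 2d-1]$. I would also record part~1 of Lemma~\ref{lemma:implicit-switching-cost}, that $\psi_{t'}^{\horizonvar}$ is $L_1(\horizonvar)$-Lipschitz for $\horizonvar \in [d,2d-1]$, for use momentarily. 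Theorem~\ref{thm:SOCO-sensitivity} then bounds $\norm{\hat{\psi}_{j} - \hat{\psi}'_{j}}$ for every boundary index $1 \le j \le v-1$ by a sum of terms $\decayvar_0^{j-1}\norm{x-x'}$, $\decayvar_0^{\abs{j-i}-1}\norm{\hat{w}_i - \hat{w}_i'}$, and $\decayvar_0^{v-j-1}\norm{\hat{x}_v - \hat{x}_v'}$, with $C_0$ and $\decayvar_0$ as in that theorem, applied with smoothness modulus $L_0$ and strong-convexity modulus $m_c$.

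The remaining work is to translate this block-level estimate into the per-time-step statement. If the target index $h$ is a block boundary, $y_h = \hat{x}_j$ and the SOCO bound applies directly; if $h$ lies strictly inside block $j$, then $y_h$ is an $L_1(\cdot)$-Lipschitz function of the two neighbouring boundary states $\hat{x}_{j-1}, \hat{x}_j$ and of that block's disturbances (by the within-block principle of optimality), so I would bound $\norm{y_h - y_h'}$ by $L_1(\cdot)$ times the SOCO bounds for those neighbouring quantities. Since neighbouring boundaries are within one block ($\le 2d-1$ time steps) of $h$, and $\decayvar = \decayvar_0^{1/(2d-1)}$ so that $\decayvar_0 = \decayvar^{2d-1}$, each exponent $\decayvar_0^{\abs{j-i}}$ converts to $\decayvar^{\abs{h-\tau}}$ at the cost of a bounded factor $\decayvar_0^{-1}$, which is absorbed into $C = C_0\decayvar_0^{-1}$. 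A disturbance perturbation at time $\tau$ lives in a single block $i$ and affects $\hat{w}_i$ (hence the SOCO bound) and, directly, the within-block trajectory via $L_1$-Lipschitzness; both contributions are handled in the same way. For the terminal condition: when $F$ is the indicator of the origin, $\tilde{\psi}_t^{\horizonvar}(x,\zeta;F) = \psi_t^{\horizonvar}(x,\zeta,0)$, so the $\psi$-analysis applies with $z=z'=0$ and no terminal term; when $F$ is a smooth convex function with $F(0)=0$, I would use the variant of \eqref{equ:SOCO-opt} in which the last decision variable $\hat{x}_v$ is also free and carries the convex cost $F$, for which the same block-tridiagonal Hessian-decay argument of Theorem~\ref{thm:SOCO-sensitivity} still goes through (again with no $z$ term); and for $\psi$, the fixed terminal state $\hat{x}_v = z$ yields the extra $\decayvar^{\horizonvar-h}\norm{z-z'}$ term directly. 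Finally, uniformity over $t$ follows since all of $a,b,b',\sigma,m_f,m_c,\ell_f,\ell_c$ — hence $L_0,C,\decayvar$ — are $t$-independent.

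I expect the main obstacle to be the \emph{bookkeeping of the exponents} in this last translation step: keeping the block partition uniform so that a single $L_0$ and a single conversion factor $\decayvar_0 = \decayvar^{2d-1}$ suffice, making sure the $L_1$-Lipschitz "bridge" from block boundaries to interior target states and disturbances does not inflate the decay exponent, and handling the partial-block endpoint effects near $h=0$ and $h=\horizonvar$. The other possible worry — that the implicitly defined switching cost $\xi_t^{\horizonvar}$ really is convex and strongly smooth with the stated constants — is exactly the content of Lemma~\ref{lemma:implicit-switching-cost}, which we take as given here.
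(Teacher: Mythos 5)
Your proposal follows the same overall route as the paper's proof: partition the horizon into blocks of length in $[d,2d-1]$, use Lemma~\ref{lemma:implicit-switching-cost} to certify that the implicit switching costs $\xi$ are convex and $L_0$-strongly smooth, invoke Theorem~\ref{thm:SOCO-sensitivity} on the resulting SOCO instance, and convert block-level exponents via $\decayvar_0 = \decayvar^{2d-1}$ with the leftover factor absorbed into $C = C_0/\decayvar_0$. The one substantive difference is how the target index $h$ is treated. The paper chooses the decision points \emph{adaptively}: it places $y_h$ itself on the boundary list (taking boundaries $y_0, y_d, \ldots, y_{(q-1)d}, y_h, y_{(q+2)d}, \ldots, y_\horizonvar$, each gap still in $[d,2d)$), so Theorem~\ref{thm:SOCO-sensitivity} bounds $\norm{y_h - y_h'}$ directly and the stated constant $C = \tfrac{2L_0}{m_c}\decayvar_0^{-1}$ falls out immediately. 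Your uniform partition leaves $h$ in a block interior and bridges to it via the $L_1(\cdot)$-Lipschitzness of $\psi$; this is logically sound and yields the same exponential decay rate, but it multiplies the constant by (roughly) $L_1(2d-1)$ and an extra $\decayvar_0^{-1}$, so it proves the theorem only with a strictly larger $C$ than the one claimed — you correctly flagged this bookkeeping as the main risk, and the fix is exactly the paper's trick of making $y_h$ a boundary. A second, smaller difference: for a smooth convex terminal cost $F$ you propose re-running the Hessian-decay argument with a free endpoint, which requires reproving a variant of Theorem~\ref{thm:SOCO-sensitivity}; the paper instead appends a fixed auxiliary state $x_{\mathrm{aux}}=0$ with an identically zero transition cost (folding $F$ into the last hitting cost), which reduces the free-endpoint case to the literal statement of Theorem~\ref{thm:SOCO-sensitivity} with no new analysis. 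Everything else — the use of the principle of optimality, the role of $(d,\sigma)$-controllability in making the boundary states free, the $z=z'=0$ specialization for the indicator terminal cost, and the $t$-uniformity of the constants — matches the paper.
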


As a remark, the second inequality in Theorem \ref{thm:LTV-sensitivity} implies that the first inequality holds when $F$ is the indicator function of the origin. To see this, we only need to set $z = z' = 0$ in the second inequality.

Theorem \ref{thm:LTV-sensitivity} allows us to bound the distance between any two trajectories so long as they can be expressed as the optimal solutions of the optimization problem \eqref{opt:without-terminal} or \eqref{opt:with-terminal}. For example, to bound the norm of each state in the predictive trajectory $\tilde{\psi}_{t}^\horizonvar(x, \zeta; F)$, we only need to set $x' = 0, \zeta' = 0$ in the first inequality because an all zero trajectory can be expressed as $\tilde{\psi}_{t}^\horizonvar(0, 0; F)$. The formal statement of this result can be found in Appendix \ref{Appendix:coro:opt-stable}.

\section{Performance Guarantees for Predictive Control}
\label{sec:guarantees}

%\yiheng{Need some intuition behind the proof. ISS: If $\norm{x_t}$ is large, why should $\norm{x_{t-1}}$ be smaller? Regret: Why $\norm{x_t - x_t^*} = O(\decayvar^k)$? Competitive ratio: The change in potential function.}

We now demonstrate the power of the perturbation approach in Section \ref{sec:perturbation-ltv} by obtaining bounds on regret and competitive ratio.
The key intuition behind our analysis is the following: at time step $t$, if the predictive controller with prediction window $k$ is given the knowledge of $x_t^*$ and $x_{t+k}^*$, it can fully recover the offline optimal states and control inputs for the future $k$ time steps, $x_{t+1:t+k}^*$ and $u_{t:t+k-1}^*$, from $\psi_t^k(x_t^*, w_{t:t+k-1}, x_{t+k}^*)$. However, without the knowledge of the offline optimal states, the predictive controller solves $\psi_t^k(x_t, w_{t:t+k-1}, x_{t+k})$ instead, where $x_{t+k}$ is implicitly determined by the $k$-th predictive state of $\tilde{\psi}_t^k(x_t, w_{t:t+k-1}; F)$. We overcome this gap with our perturbation approach (specifically, Theorem \ref{thm:LTV-sensitivity} and its corollaries in Appendix \ref{Appendix:coro:opt-stable} and \ref{Appendix:smooth-cost}), which allows us to bound the distance between the controller's trajectory and the offline optimal trajectory.

\subsection{Dynamic Regret}
%\haoyuan{We may want to move Lemma G.1 up to the body}

%\adam{I think we should present the theorem and corollary first here, and then give the description of the analysis}

We first bound the dynamic regret of predictive control.  For this analysis, a key observation is that the offline optimal trajectory is given by
$x^* = \tilde{\psi}_{0}^{T}\left(x_0, w_{0:T-1}; 0\right)_{y_{1:T}}.$ Furthermore, the optimal trajectory starting at time step $t$ with state $x_t$ is equivalent to the trajectory of predictive control with prediction window $(T-t)$ and no terminal cost, i.e. $\tilde{\psi}_{t}^{T-t}\left(x_t, w_{t:T-1}; 0\right)_{y_{1:T-t}}$.
Using Theorem \ref{thm:LTV-sensitivity}, we can bound the change in decision points against the change in prediction window $k$. Lemma \ref{lemma:one-step-diff} formalizes this:

\begin{lemma}\label{lemma:one-step-diff}
    Suppose the assumptions of Theorem \ref{thm:LTV-sensitivity} hold, and let $\decayvar, C$ be the decay rate and constants defined in Theorem \ref{thm:LTV-sensitivity}. For any integers $\horizonvar, h$ such that $\horizonvar \geq h \geq 1$ and time step $t < T - \horizonvar$, we have
    \[\norm{\tilde{\psi}_{t}^\horizonvar \left(x_t, w_{t:t+\horizonvar-1}; F\right)_{y_h} - \tilde{\psi}_{t}^{\horizonvar +1}\left(x_t, w_{t:t+\horizonvar}; F\right)_{y_h}} \leq 2 C \decayvar^{p-h}\left(C\decayvar^\horizonvar \norm{x_t} + \frac{2 C}{1 - \decayvar} \sup_{0\leq \tau \leq T-1}\norm{w_\tau}\right).\]
\end{lemma}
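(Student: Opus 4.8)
The plan is to compare the two predictive trajectories \emph{through their common structure as constrained problems with a fixed terminal state}, so that the change of the prediction window from $\horizonvar$ to $\horizonvar+1$ reduces to a single terminal-state perturbation handled by the second inequality of Theorem~\ref{thm:LTV-sensitivity}. Write $(y^A, v^A) := \tilde\psi_t^\horizonvar(x_t, w_{t:t+\horizonvar-1}; F)$ and $(y^B, v^B) := \tilde\psi_t^{\horizonvar+1}(x_t, w_{t:t+\horizonvar}; F)$; both start from $y_0^A = y_0^B = x_t$. The first step is to note that once the terminal state of a $\tilde\psi$-problem is fixed, the terminal cost $F$ evaluated there is a constant and is therefore irrelevant to the inner optimization: hence $y^A$ is the optimal solution of $\psi_t^\horizonvar(x_t, w_{t:t+\horizonvar-1}, y_\horizonvar^A)$, and — after additionally invoking the principle of optimality to pass from the $(\horizonvar+1)$-step problem to its $\horizonvar$-step prefix — the truncation $y_{0:\horizonvar}^B$ is the optimal solution of $\psi_t^\horizonvar(x_t, w_{t:t+\horizonvar-1}, y_\horizonvar^B)$. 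These two instances of \eqref{opt:with-terminal} share the same initial state and disturbances and differ only in the terminal state, so the second inequality of Theorem~\ref{thm:LTV-sensitivity} gives, for the prescribed $1 \le h \le \horizonvar$,
\[\norm{\tilde\psi_t^\horizonvar(x_t, w_{t:t+\horizonvar-1}; F)_{y_h} - \tilde\psi_t^{\horizonvar+1}(x_t, w_{t:t+\horizonvar}; F)_{y_h}} = \norm{y_h^A - y_h^B} \le C\decayvar^{\horizonvar-h}\norm{y_\horizonvar^A - y_\horizonvar^B}.\]

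The second step is to bound $\norm{y_\horizonvar^A - y_\horizonvar^B}$ by bounding each of $\norm{y_\horizonvar^A}$ and $\norm{y_\horizonvar^B}$ separately via the ``all-zero trajectory'' device underlying the stability corollary (Appendix~\ref{Appendix:coro:opt-stable}). Since the costs are non-negative with $f_t(0)=c_t(0)=F(0)=0$, the all-zero state/control sequence is feasible and optimal, i.e.\ $\tilde\psi_t^\horizonvar(0,0;F)$ and $\tilde\psi_t^{\horizonvar+1}(0,0;F)$ are identically zero; applying the first inequality of Theorem~\ref{thm:LTV-sensitivity} with $x'=0,\zeta'=0$ at index $h=\horizonvar$ (once for the $\horizonvar$-step problem, once for the $(\horizonvar+1)$-step problem) yields
\[\norm{y_\horizonvar^A} \le C\left(\decayvar^\horizonvar\norm{x_t} + \sum_{\tau=0}^{\horizonvar-1}\decayvar^{\horizonvar-\tau}\norm{w_{t+\tau}}\right), \qquad \norm{y_\horizonvar^B} \le C\left(\decayvar^\horizonvar\norm{x_t} + \sum_{\tau=0}^{\horizonvar}\decayvar^{\abs{\horizonvar-\tau}}\norm{w_{t+\tau}}\right).\]
Bounding each geometric sum by $\sum_{j\ge 0}\decayvar^j = 1/(1-\decayvar)$ and applying the triangle inequality gives $\norm{y_\horizonvar^A - y_\horizonvar^B} \le 2C\decayvar^\horizonvar\norm{x_t} + \tfrac{2C}{1-\decayvar}\sup_{0\le\tau\le T-1}\norm{w_\tau}$. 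Substituting this into the display above proves the lemma; in fact the argument produces the slightly sharper disturbance constant $\tfrac{C}{1-\decayvar}$, which we relax to $\tfrac{2C}{1-\decayvar}$ to match the stated form.

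There is no deep obstacle here — the work is essentially careful bookkeeping. One must be precise that (i) fixing the terminal value of a $\tilde\psi$-problem turns the terminal cost into a constant, so a $\tilde\psi$-optimal trajectory restricts to a $\psi$-optimal one to which Theorem~\ref{thm:LTV-sensitivity} applies; (ii) the principle of optimality is applied correctly when cutting the $(\horizonvar+1)$-step problem $B$ down to its $\horizonvar$-step prefix; and (iii) the disturbance indices line up, with $B$ using exactly one extra disturbance $w_{t+\horizonvar}$ compared to $A$, so the hypothesis $t < T-\horizonvar$ is precisely what guarantees $w_{t+\horizonvar}$ is available. Everything else is a direct substitution into Theorem~\ref{thm:LTV-sensitivity} together with elementary geometric-series estimates.
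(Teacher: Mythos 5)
Your proof is correct and follows essentially the same route as the paper: reduce the window-length change to a terminal-state perturbation of $\psi_t^{\horizonvar}$ via the principle of optimality and the second inequality of Theorem~\ref{thm:LTV-sensitivity}, then bound the two terminal states $z, z'$ by comparison with the all-zero trajectory (this second step is exactly Corollary~\ref{coro:opt-stable} in the appendix, which you re-derive inline). The only difference is cosmetic: your one-sided geometric sum at $h=\horizonvar$ gives the marginally sharper constant $C/(1-\decayvar)$, which you correctly relax to match the stated bound.
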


By cumulatively summing up the bounded difference in Lemma \ref{lemma:one-step-diff} and applying Theorem \ref{thm:LTV-sensitivity}, we can show that at state $x_t$ at time step $t$, the predictive controller picks a near-optimal control action $u_t$. Specifically, the distance between the predictive controller's next state $x_{t+1}$ and $\tilde{\psi}_{t}^{T-t}\left(x_t, w_{t:T-1}; 0\right)_{y_{1}}$ is in the order of $O(\decayvar^k)$, where $\decayvar$ is the decay rate of perturbation impact defined in Theorem \ref{thm:LTV-sensitivity}. From here, we can derive an $O(\decayvar^k)$ upper bound on the distance between the algorithm's trajectory and the offline optimal trajectory, which leads to the regret bound in Theorem \ref{thm:distance-MPC-infty}.

%Since the initial point $x_0$ is fixed, this allows us to compare the offline optimal trajectory and the online closed-loop trajectory.
%Cumulatively summing up the differences in the two trajectories yields a regret bound that decays exponentially with respect to the prediction window $k$.

\begin{theorem}\label{thm:distance-MPC-infty}
Suppose $\norm{w_t} \leq D$ for some constant $D$ at each time step $t$. Suppose the assumptions of Theorem \ref{thm:LTV-sensitivity} hold, and let $\decayvar, C, L_0$ be the decay rate and constants defined in Theorem \ref{thm:LTV-sensitivity}. If prediction window $k \ge d$ is sufficiently large, such that
\begin{equation}\label{equ:regret-condition}
    k \geq 1 + \log\left(\frac{1}{1 - \delta}\cdot C \left(\frac{2C}{1 - \decayvar} + \decayvar\right)\right) \bigg/ \log\left(\frac{1}{\decayvar}\right)
\end{equation}
for some positive constant $\delta \in (0, 1)$, then the trajectory of $PC_k$ satisfies:
\begin{enumerate}
    \item (Input-to-state Stability) The norm of each state $x_t$ is upper bounded by
    \begin{equation*}
        \norm{x_t} \leq 
        \begin{cases}
            \frac{C}{\delta}\cdot (1 - \delta)^{\max(0, t - k)} \norm{x_0} + \frac{2 C}{\delta (1 - \decayvar)}\left(1 + \frac{2 C}{1 - \decayvar}\right) D & \text{ if }0 < t \leq T - k\\
            \frac{C^2}{\delta}\cdot (1 - \delta)^{T - 2k} \decayvar^{t+k-T}\norm{x_0} + \left(\frac{2 C^2}{\delta (1 - \decayvar)}\left(1 + \frac{2 C}{1 - \decayvar}\right) + \frac{2C}{1 - \decayvar}\right)D & \text{ if } T - k < t \leq T.
        \end{cases}
    \end{equation*}
    %\item (Dynamic Regret) The dynamic regret of $PC_k$ is upper bounded by
    %\[cost({PC}_k) - cost(OPT) \le \eta \cdot cost(OPT) + \left(1 + \frac{1}{\eta}\right) O\left(\left(D + \frac{\decayvar^k(\norm{x_0} + D)}{\delta}\right)^2 \decayvar^{2k} T\right),\]
    %where coefficient $\eta$ can be any positive real number, and notation $O$ hides quantities $a, b', \ell_f, \ell_c,$ $C, 1/(1 - \lambda)$ and $L_0$.
    \item (Dynamic Regret) The dynamic regret of $PC_k$ is upper bounded by
    %\[cost(PC_k) - cost(OPT) = O(\lambda^k T),\]
    \[cost(PC_k) - cost(OPT) = O\left(\left(D + \frac{\decayvar^k(\norm{x_0} + D)}{\delta}\right)^2 \decayvar^{k} T + \lambda^k \norm{x_0}^2\right),\]
    where the notation hides quantities $a, b', \ell_f, \ell_c,$ $C, 1/(1 - \lambda)$ and $L_0$.
\end{enumerate}
\end{theorem}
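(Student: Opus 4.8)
The plan is to first establish input-to-state stability (ISS) of $PC_k$ by a contraction argument, then use ISS together with the perturbation bound to show each one-step control action is near-optimal, and finally telescope the per-step errors into the dynamic regret bound. For the ISS part, I would track the quantity $b_t := \norm{x_t - \tilde\psi_{t-1}^{T-t+1}(x_{t-1}, w_{t-1:T-1}; 0)_{y_1}}$ — the gap between where $PC_k$ actually lands at time $t$ and where the tail-optimal trajectory from $x_{t-1}$ would land. Since $PC_k$ at time $t-1$ commits the first action of $\tilde\psi_{t-1}^k(x_{t-1}, w_{t-1:t+k-2}; F)$ rather than of $\tilde\psi_{t-1}^{T-t+1}(x_{t-1}, w_{t-1:T-1}; 0)$, Lemma \ref{lemma:one-step-diff} (summed over prediction windows $k, k+1, \ldots, T-t$, a geometric series) bounds $b_t$ by roughly $2C\decayvar^{k-1}\big(C\decayvar^{\,t-1}\norm{x_{t-1}} \cdot \tfrac{1}{1-\decayvar} + \tfrac{2C}{1-\decayvar}\cdot\tfrac{1}{1-\decayvar} D\big)$-type terms, i.e. $b_t \le C'\decayvar^{k}\norm{x_{t-1}} + C''\decayvar^k D$ with the constants absorbed appropriately. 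Composing with the perturbation bound of Theorem \ref{thm:LTV-sensitivity} applied to $\tilde\psi_{t-1}^{\,\cdot}(\cdot; 0)$ with $x'=0,\zeta'=0$ (which gives $\norm{x_t^{\mathrm{tail}}} \le C\decayvar\norm{x_{t-1}} + \tfrac{2C}{1-\decayvar}D$), one gets $\norm{x_t} \le b_t + \norm{x_t^{\mathrm{tail}}} \le (C\decayvar + C'\decayvar^k)\norm{x_{t-1}} + (\tfrac{2C}{1-\decayvar} + C''\decayvar^k)D$. The condition \eqref{equ:regret-condition} is exactly what makes the coefficient of $\norm{x_{t-1}}$ at most $1-\delta < 1$ once $t > k$ (for $t \le k$ one uses the cruder $\decayvar$-type recursion), so unrolling the recursion yields the stated geometric-decay-plus-constant bounds for $\norm{x_t}$ in both the $t\le T-k$ and $t > T-k$ regimes; the last-window case is handled by noting $PC_k$ finishes optimally, so an extra factor $C\decayvar^{t+k-T}$ appears from propagating the terminal-window perturbation.

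For the dynamic regret, the key object is the per-step distance $e_t := \norm{x_t - x_t^*}$ between the $PC_k$ trajectory and the offline optimum. Writing the offline optimum's tail from time $t$ as $\tilde\psi_t^{T-t}(x_t^*, w_{t:T-1};0)$ and the $PC_k$ next state as (approximately) the first step of $\tilde\psi_t^{T-t}(x_t, w_{t:T-1};0)$ up to the error $b_{t+1} = O(\decayvar^k(\norm{x_t}+D))$ already controlled by ISS, Theorem \ref{thm:LTV-sensitivity} gives $e_{t+1} \le \decayvar\, C\, e_t + O(\decayvar^k(\norm{x_t} + D))$; unrolling this contraction and substituting the ISS bound on $\norm{x_t}$ gives $e_t = O\!\big(\decayvar^k(D + \decayvar^k(\norm{x_0}+D)/\delta)\big) + O(\decayvar^{t}\,C^{t}\norm{x_0})$-type decay — i.e. a uniform $O(\decayvar^k(D + \decayvar^k(\norm{x_0}+D)/\delta))$ term plus a transient in $\norm{x_0}$. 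Finally I would convert trajectory distance into cost difference using the strong smoothness of the optimal-value function $\iota$ (the corollary referenced as Appendix \ref{Appendix:smooth-cost}): with $\eta$ chosen appropriately, $\mathrm{cost}(PC_k) \le (1+\eta)\,\mathrm{cost}(OPT) + O(L_0)\sum_t (e_t^2 + \text{initial/terminal terms})$, and since $\mathrm{cost}(OPT) = O(D^2 T)$ under bounded disturbances while $\sum_t e_t^2 = O\big((D + \decayvar^k(\norm{x_0}+D)/\delta)^2 \decayvar^{2k} T + \decayvar^k\norm{x_0}^2\big)$, choosing $\eta = \Theta(\decayvar^k)$ balances the two contributions and produces the claimed $O\big((D + \decayvar^k(\norm{x_0}+D)/\delta)^2\decayvar^k T + \decayvar^k\norm{x_0}^2\big)$ bound.

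I expect the main obstacle to be the bookkeeping in the ISS argument — specifically, making the recursion $\norm{x_t} \le (1-\delta)\norm{x_{t-1}} + (\text{const})D$ genuinely valid, which requires $b_t$ to be bounded in terms of $\norm{x_{t-1}}$ (not $\norm{x_t}$, which would be circular) and requires carefully separating the "early" steps $t \le k$ (where the prediction window already reaches the horizon end and one cannot invoke Lemma \ref{lemma:one-step-diff} with the full geometric tail) from the "steady-state" steps. A secondary subtlety is the terminal regime $t > T-k$: there $PC_k$ switches to solving $\tilde\psi^{T-t}$ with zero terminal cost and commits all remaining actions at once, so the trajectory there must be compared to the offline optimum directly via Theorem \ref{thm:LTV-sensitivity} with the $\decayvar^{\horizonvar-h}\norm{z-z'}$ terminal term, accounting for the extra $\decayvar^{t+k-T}$ factors in the stated bound. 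The cost-conversion step is comparatively routine once the right corollary on strong smoothness of $\iota_t^p$ is in hand, but the choice of $\eta$ and the verification that cross terms do not dominate will need a short calculation.
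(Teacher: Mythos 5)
Your overall architecture (bound the drift between $PC_k$'s step and the tail-optimal step via Lemma \ref{lemma:one-step-diff}, convert trajectory distance to cost via the smoothness of $\iota$, choose $\eta=\Theta(\decayvar^k)$) matches the paper, and the cost-conversion and $cost(OPT)=O(D^2T+\norm{x_0}^2)$ steps are essentially right. But there is a genuine gap at the heart of both your ISS and your regret arguments: you rely on one-step recursions of the form $\norm{x_t}\le (C\decayvar+C'\decayvar^k)\norm{x_{t-1}}+\mathrm{const}\cdot D$ and $e_{t+1}\le C\decayvar\, e_t + O(\decayvar^k(\cdot))$, and you treat these as contractions. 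They are not: the constant $C$ from Theorem \ref{thm:LTV-sensitivity} is $C=\frac{2L_0}{m_c}\decayvar_0^{-1}$, which is typically much larger than $1$, while $\decayvar=\decayvar_0^{1/(2d-1)}$ is close to $1$; so $C\decayvar>1$ in general, and condition \eqref{equ:regret-condition} controls only the quantity $C\decayvar^{k-1}\left(\frac{2C}{1-\decayvar}+\decayvar\right)$, not $C\decayvar$. Unrolling your recursions would produce factors $(C\decayvar)^t$ that grow exponentially (indeed your own "$O(\decayvar^tC^t\norm{x_0})$ transient" is a growing term), so the argument as stated does not close.

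The paper avoids this in two ways that you would need to adopt. For ISS, it does not recurse one step at a time: it telescopes $x_t=\tilde\psi_{t-1}^k(x_{t-1};F)_{y_1}$ across a full window of $k$ steps, using the identity $\tilde\psi_{t-i}^k(x_{t-i};F)_{y_i}=\tilde\psi_{t-i+1}^{k-1}(x_{t-i+1};F)_{y_{i-1}}$ so that each difference term becomes a change of prediction window $k\to k-1$ from the \emph{same} state and is handled by Lemma \ref{lemma:one-step-diff}. This yields a $k$-step recursion $\norm{x_t}\le\sum_{i=1}^{k}\alpha_i\norm{x_{t-i}}+\mathrm{const}\cdot D$ whose coefficients sum to $C\decayvar^{k-1}\left(\decayvar+\frac{2C}{1-\decayvar}\right)\le 1-\delta$ precisely under \eqref{equ:regret-condition}; induction on this multi-step recursion gives the $(1-\delta)^{\max(0,t-k)}$ bound. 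For the regret, the paper compares $x_t$ directly to the full-horizon trajectory $\hat x_t=\tilde\psi_0^T(x_0;F)_{y_t}$ via a single telescoping sum in which each summand carries one factor $C\decayvar^i$ multiplying a quantity already of size $O(\decayvar^k)$; since $\sum_i C\decayvar^i=O(C/(1-\decayvar))$ these factors are summed once rather than compounded, which is what rescues the $O(\decayvar^k)$ per-state bound. You would also need the extra step comparing $\hat x_t$ to the true optimum $x_t^*$ (another application of Theorem \ref{thm:LTV-sensitivity} through the terminal state), which your sketch elides.
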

An implication of Theorem \ref{thm:distance-MPC-infty} is that to obtain $o(1)$ dynamic regret when the norm of disturbances are uniformly upper bounded, it suffices to use a prediction window of length $\Theta(\log{T})$. This parallels the result shown in \cite{yu2020power}, although in a more general setting.

\begin{comment}
An implication of Theorem \ref{thm:distance-MPC-infty} is that to obtain sublinear dynamic regret when the norm of disturbances are uniformly upper bounded, it suffices to use a prediction window of length $\Theta(\log{T})$ and set $\eta = \Theta(1/\sqrt{T})$ in Theorem \ref{thm:distance-MPC-infty}. We state this result in Corollary \ref{coro:sublinear-regret}.

\begin{corollary}\label{coro:sublinear-regret}
%To achieve a dynamic regret of $O(\sqrt{T})$, the required prediction window is $k = \Theta(\log T)$.
Under the same assumptions as Theorem \ref{thm:distance-MPC-infty}, by selecting $\eta = \lambda^k$, we have $cost(PC_k) - cost(OPT) \leq O(\lambda^k T)$, where notation $O$ hides quantities $a, b', \ell_f, \ell_c,$ $C, 1/(1 - \lambda), L_0, D, 1/\delta$, and $\norm{x_0}$.
\end{corollary}
\end{comment}

\subsection{Competitive Ratio}
%\yiheng{We rewrote this section.}

We now focus on bounding the competitive ratio of predictive control. Here. we study a special case of the predictive control algorithm $PC_k$ (Algorithm \ref{alg:pc}), where the terminal cost $F$ is the indicator function of the origin, i.e., 
\[F(x) = \begin{cases}
0 & \text{ if } x = 0,\\
+\infty & \text{ otherwise}.
\end{cases}\]
This indicator terminal cost function enforces that the last predictive state in every predictive trajectory must be the origin, which can be achieved when $k \geq d$ because of the controllability (Assumption \ref{assump:LTV}).

The idea of using the strict terminal constraint $x = 0$, which is equivalent to our indicator terminal cost $F$, was first proposed in the MPC literature as the simplest way to guarantee the recursive feasibility and stability when there are state and control constraints \cite{borrelli2017predictive}. A similar technique of moving towards to the minimizer is also considered in the SOCO literature \cite{lin2020online, argue2020dimension}, which has been proved to be a simple and robust way to achieve constant competitive ratio even in non-convex settings \cite{lin2020online}. Technically, we require the terminal state of every predictive trajectory $\tilde{\psi}_t^k\left(x_t, w_{t:t+k-1}; F\right)$ to be $0$ because it allows us to bound its distance with offline optimal state $x_{t+k}^*$ by the offline optimal state cost at time $t+k$.

%Many previous work highlights that enforcing the agent to move to the minimizer (or equilibrium) is beneficial for the competitive ratio and feasibility analysis. Although simple, the \textit{Move Towards Minimizer} greedy-style algorithm is shown to be competitive in a variety of settings in the SOCO literature \cite{lin2020online, argue2020dimension}.

%\haoyuan{probably need citation and some trimming}
%Also, we note that if the disturbances are zero, then the origin is a stable equilibrium in the sense that no action is required from the controller to maintain this state.
%This property is beneficial in the MPC setting where we additionally impose constraints on the state and action spaces, so that the closed-loop trajectory is guaranteed to be feasible.
%This suggests a potential extension of our results to a more general MPC setting.

Our proof for the competitive ratio result is inspired by the widely-used potential method (see \cite{bansal2019potential} for a survey). We define the potential as the squared distance between the algorithm's trajectory and the offline optimal trajectory, i.e.,
$\phi_t = \norm{x_t - x_t^*}^2.$
The same potential has been adopted by many previous work in the SOCO literature, and they show competitive ratios by comparing $\phi_t$ with $\phi_{t-1}$ \cite{goel2019beyond, goel2018smoothed,shi2020online}. Our proof technique is different from previous work in that we compare $\phi_t$ with a weighted sum of $\phi_1, \ldots, \phi_{t-1}$, where the sum of the weights is less than $1$ for sufficiently large prediction window $k$. The resulting inequalities allow us to upper bound $\sum_{t=1}^{T-k} \phi_t$ by $O(\decayvar^{2k})$ times the offline optimal cost, which can further be used to derive the competitive ratio bound. 

%We study how this potential function changes over time. Intuitively, we want the algorithm to ``move closer'' to the offline optimal trajectory so the increment of this potential function is upper bounded by the offline optimal cost. Given our choice of the terminal cost,  this is achievable for sufficiently large prediction window $k$, and we thus are able to obtain a competitive ratio bound.

\begin{theorem}\label{thm:new-competitive-ratio}
Consider the case where the terminal cost $F$ in $PC_k$ is the indicator function of the origin. Suppose the assumptions of Theorem \ref{thm:LTV-sensitivity} hold, and let $\decayvar, C, L_0$ be the decay rate and constants defined in Theorem~\ref{thm:LTV-sensitivity}. If the prediction window length $k \geq d$ is sufficiently large such that for some constant $\varepsilon \in (0, 1)$,
\[k \geq \ln\left(\frac{6 C^6}{(1 - \varepsilon) \lambda^2 (1 - \lambda)^2 (1 - \lambda^2)^2}\right)/ \left(4\ln(1/\lambda)\right),\]
then the competitive ratio of $PC_k$ (Algorithm \ref{alg:pc}) is \[1 + \lambda^k \cdot \left(1 + \frac{24 C^4 (C + 1)^2 \left(2\ell_f + 4(b')^2 \ell_c + 4 a^2 (b')^2 \ell_c + L_0 + \ell_f\right)}{\epsilon \cdot \lambda^4 (1 - \lambda)^2(1 - \lambda^2)^2 m_f}\right).\]
\end{theorem}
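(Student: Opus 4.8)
The plan is to run the potential argument announced just above the statement, with $\phi_t := \norm{x_t - x_t^*}^2$: derive, for every $t$, an inequality bounding $\sqrt{\phi_t}$ by a weighted combination of $\sqrt{\phi_1},\dots,\sqrt{\phi_{t-1}}$ whose coefficients sum to a quantity strictly below $1$ (guaranteed by the displayed lower bound on $k$), plus a remainder built only from square roots of offline hitting costs near time $t$ with coefficients of order $\lambda^{k}$; then square, sum over $t$, and absorb. First I would record what the indicator terminal cost buys. Since $F$ is the indicator of the origin, $\tilde{\psi}_t^k(x_t, w_{t:t+k-1}; F) = \psi_t^k(x_t, w_{t:t+k-1}, 0)$, so $PC_k$ commits $x_{t+1} = \psi_t^k(x_t, w_{t:t+k-1}, 0)_{y_1}$; by the principle of optimality the offline optimal tail over the same window is $\psi_t^k(x_t^*, w_{t:t+k-1}, x_{t+k}^*)$; and $x_0 = x_0^*$, so $\phi_0 = 0$. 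I would also record two elementary facts that turn everything into offline costs: $f_s$ is $m_f$-strongly convex with $f_s(0)=0$ (hence $\nabla f_s(0)=0$), so $\norm{x_s^*}^2 \le \tfrac{2}{m_f}f_s(x_s^*)$; and subtracting the dynamics at time $t-1$ gives $\norm{u_{t-1}-u_{t-1}^*} \le b'\bigl(\norm{x_t-x_t^*} + a\,\norm{x_{t-1}-x_{t-1}^*}\bigr)$, so control deviations are dominated by state deviations.

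The heart of the proof is the recursion for $\sqrt{\phi_{t+1}}$, and here I would use that $PC_k$ re-plans every step. The naive one-step bound $\sqrt{\phi_{t+1}} \le C\lambda\sqrt{\phi_t} + C\lambda^{k-1}\norm{x_{t+k}^*}$ (Theorem~\ref{thm:LTV-sensitivity} plus the principle of optimality) is useless on its own because $C\lambda>1$ in general; instead I would telescope the state that the running plan assigns to time $t+1$ across the re-plannings at times $t-r$, $r=0,\dots,m$, for a suitable look-back depth $m$. Writing $P_r := \psi_{t-r}^k(x_{t-r}, w_{t-r:t-r+k-1}, 0)_{y_{r+1}}$ one has $P_0 = x_{t+1}$, and by the principle of optimality applied to the plan at time $t-r$ each increment $P_r - P_{r+1}$ is a pure terminal-state perturbation of a shorter control problem, so Theorem~\ref{thm:LTV-sensitivity} controls $\norm{P_r - P_{r+1}}$ by an exponentially small constant times the norm of the second-to-last state $\rho_r$ of the plan $\psi_{t-r}^k(x_{t-r}, w, 0)$. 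Because that plan reaches $0$ at its end, comparing $\rho_r$ to the offline state $x_{t+k-r-1}^*$ (again via Theorem~\ref{thm:LTV-sensitivity}, now an initial-and-terminal perturbation) bounds $\norm{\rho_r}$ by $\norm{x_{t+k-r-1}^*} + C\lambda^{k-1}\sqrt{\phi_{t-r}} + C\lambda\,\norm{x_{t+k-r}^*}$, and the residual $P_m - x_{t+1}^*$ is controlled, at index $m+1$, by $C\lambda^{m+1}\sqrt{\phi_{t-m}} + C\lambda^{k-m-1}\norm{x_{t-m+k}^*}$. Tuning $m$ (and, symmetrically, the index at which the algorithm's state is compared to the offline trajectory) so that every coefficient multiplying a $\sqrt{\phi}$ is small enough that the total $\phi$-weight stays below $1$, while every coefficient multiplying an $\norm{x_s^*}$ is $O(\lambda^{k})$, gives the desired recursion; $\norm{x_s^*} \le \sqrt{2 f_s(x_s^*)/m_f}$ then converts the remainder to offline costs.

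Squaring the recursion (using $(\alpha a + b)^2 \le \alpha a^2 + (1-\alpha)^{-1}b^2$ for $\alpha<1$), summing over $1 \le t \le T-k$, and interchanging the order of summation, the $\phi$-terms on the right collect into $W\sum_t\phi_t$ with $W<1$ under the stated bound on $k$, so they move to the left; what remains is $O(\lambda^{2k})$ times $\sum_t$ of offline costs, i.e. $\sum_{t=1}^{T-k}\phi_t = O(\lambda^{2k})\,cost(OPT)$. The tail $t>T-k$ is immediate: there $x_{T-k+1:T}$ and $x_{T-k+1:T}^*$ are two solutions of $\tilde{\psi}_{T-k}^k(\cdot, w_{T-k:T-1}; 0)$ differing only in the initial state, so Theorem~\ref{thm:LTV-sensitivity} gives $\phi_t \le C^2\lambda^{2(t-T+k)}\phi_{T-k}$ and $\sum_{t>T-k}\phi_t = O(\phi_{T-k})$, already covered. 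Finally I would convert the trajectory bound into the competitive ratio: for $t \le T-k$ use the standard inequality for non-negative strongly-smooth costs, $g(y) \le (1+\eta)g(y') + \tfrac{\ell}{2}(1+\tfrac1\eta)\norm{y-y'}^2$, on $f_t(x_t)$ (with $\ell_f$) and on $c_t(u_{t-1})$ (with $\ell_c$, together with $\norm{u_{t-1}-u_{t-1}^*}^2 \le 2(b')^2(\phi_t + a^2\phi_{t-1})$), and for the final $k$-block apply the same inequality to the value function $\iota_{T-k}^k$ in its initial state to bound the whole tail cost by $(1+\eta)$ times the offline tail cost plus $O(1/\eta)\phi_{T-k}$. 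Summing gives $cost(PC_k) \le (1+\eta)\,cost(OPT) + O\!\bigl(\tfrac1\eta(\ell_f + (b')^2\ell_c + a^2(b')^2\ell_c + L_0)\bigr)\sum_t\phi_t$, and choosing $\eta = \Theta(\lambda^k)$, together with $\sum_t\phi_t = O(\lambda^{2k})cost(OPT)$, yields the claimed $1 + \lambda^k(1+\cdots)$ — the explicit constant coming from tracking $C$, $L_0$ (which enters $\phi_t$ through Theorem~\ref{thm:LTV-sensitivity} and Lemma~\ref{lemma:implicit-switching-cost}), the geometric factors $1/(1-\lambda)$, $1/(1-\lambda^2)$, $m_f$, and $\varepsilon$.

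The step I expect to be the main obstacle is obtaining the recursion with a genuinely sub-unit $\phi$-weight and simultaneously $O(\lambda^{k})$ offline-cost weights. There is a real tension: the only place a $\sqrt{\phi}$ appears with a coefficient below $1$ is at a comparison index far from the ends of the plan, which forces the telescoping and brings in the second-to-last-state terms; but pushing the look-back all the way to the plan's forced terminal $0$ would reintroduce an $\norm{x_{t+1}^*}$ term with an $O(1)$ coefficient, which would destroy the $\lambda^{2k}$ saving in $\sum_t\phi_t$. Getting the bookkeeping to balance these two requirements — and to come out as the explicit constant in the statement, while also handling the degenerate regimes near both ends of the horizon — is where the real work lies.
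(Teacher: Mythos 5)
Your proposal is correct in substance and follows the same strategy as the paper: the potential $\phi_t=\norm{x_t-x_t^*}^2$, the identification $\tilde{\psi}_t^k(\cdot;F)=\psi_t^k(\cdot,\cdot,0)$ and the principle of optimality to express both trajectories as solutions of $\psi$, Theorem \ref{thm:LTV-sensitivity} applied to initial/terminal perturbations with the triangle inequality routed through the offline states (exploiting the forced terminal $0$), strong convexity of $f_t$ at $0$ to convert $\norm{x_s^*}^2$ into offline hitting costs, the square--sum--absorb step under the stated lower bound on $k$, and the smoothness argument with $\eta=\Theta(\lambda^k)$ at the end (including the treatment of the final $k$-block via $\iota_{T-k}^k$). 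The one organizational difference is in the telescoping: you truncate the look-back at a tuned constant depth $m$, paying a coefficient $C\lambda^{m+1}$ on $\sqrt{\phi_{t-m}}$ that is sub-unit but not small in $k$, whereas the paper telescopes all the way back to $t=0$ by first summing the horizon-extension differences over all $p\ge k$ (its Lemma \ref{lemma:new-one-step-diff} plus the geometric sum in \eqref{thm:new-distance-MPC-infty:e1}) and then composing with the re-planning chain through the $PC_T$ trajectory $\hat{x}_t$; this makes every coefficient on a past $\sqrt{\phi}$ carry an explicit $\lambda^{2k}$ factor, so no truncation parameter is needed, the total $\phi$-weight after squaring is $O(\lambda^{4k})$ (which is exactly where the $4\ln(1/\lambda)$ in the threshold on $k$ comes from), and the displayed constants drop out directly. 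Your route yields a bound of the same form $1+O(\lambda^k)$, but the explicit constant and the explicit threshold on $k$ would differ from those in the statement, and the tension you flag at the end (choosing $m$ large enough that $C\lambda^{m+1}<1$ yet small enough that the offline-state coefficients remain $O(\lambda^{k-m})=O(\lambda^k)$ up to a $\mathrm{poly}(C)$ factor) is real but resolvable since $m$ depends only on $C$ and $\lambda$, not on $k$; the paper's untruncated bookkeeping is the cleaner way to make the stated constants come out.
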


Note that, for a fixed choice of the constant $\varepsilon$, the competitive ratio is on the order of $1 + O(\decayvar^k)$ as the length of prediction $k$ tends to infinity. One potential line of future work is to better understand the role of terminal cost. It may be possible to relax the assumptions on the terminal cost so that the controller's predictive trajectory does not have to return to the origin.

%\section*{Acknowledgement}
%\haoyuan{Need to thank Yisong and other people who had helped this project.}

\bibliographystyle{unsrtnat}
\bibliography{main.bib}

\clearpage

\appendix

\section{Definitions and Notations}\label{Appendix:def-and-notations}
\label{sec:standard-notations}
\begin{definition}
We use the follow convention on linear algebra:
\begin{enumerate}
    \item $\norm{\cdot}$ denotes the (Euclidean) $2$-norm for vectors and the induced $2$-norm for matrices:
    \begin{align*}
        \norm{v} &= \sqrt{v_1^2 + v_2^2 + \cdots + v_n^2}, \; v \in \bR^n \\
        \norm{A} &= \sup_{v \in \bR^n \setminus \{0\}} \frac{\norm{Ax}}{\norm{x}}, \; A \in \bR^{m \times n};
    \end{align*}
    \item $\sigma(A)$ is the collection of singular values of a matrix $A$, also known as the singular spectrum;
    \item $\sigma_{\min}(A)$ denotes the smallest singular value of a matrix $A$;
    \item $A \succeq 0$ indicates that a matrix $A$ is positive semi-definite.
\end{enumerate}
\end{definition} 
The notions of strong convexity and smoothness are used throughout this paper:
\begin{definition}
    A real-valued function $g : \bR^n \to \bR$ is called $\ell$-strongly smooth if 
    \[ g(y) \leq g(x) + \langle \nabla g(x), y-x \rangle + \frac{\ell}{2} \norm{y - x}_2^2\]
    holds for any $x, y \in \bR^n$, and is called $m$-strongly convex if 
    \[ g(y) \geq g(x) + \langle \nabla g(x), y-x \rangle + \frac{m}{2} \norm{y - x}_2^2 \]
    holds for any $x, y \in \bR^n$. Here $\langle \cdot, \cdot \rangle$ denotes the standard inner product of vectors.
\end{definition}
For the regret bound, we require the terminal cost to be a K-function, the definition of which is given below.
\begin{definition}
    A function $g: \bR_{\geq 0} \to \bR_{\geq 0}$ is said to be a K-function (or belongs to class K), if it is continuous, strictly increasing, and satisfies $g(0) = 0$.
\end{definition}

\section{Proof of Theorem \ref{thm:SOCO-sensitivity}}\label{Appendix:thm:SOCO-sensitivity}
Before we show Theorem \ref{thm:SOCO-sensitivity}, we will first show a result in Lemma \ref{thm:band-mat-inverse-3} about the exponential decay phenomena in the inverse of a (block) banded matrix: the magnitudes of elements decay exponentially with respect to their distance to the main diagonal. Our Lemma \ref{thm:band-mat-inverse-3} generalizes Proposition 2.2 in \cite{demko1984decay} to consider block matrices and additive terms on the main diagonal. We need this generalization because we want to apply this result to the Hessian of the objective function in \eqref{equ:SOCO-opt}. 
%\yiheng{I add more related works here.} 
Other than matrix inverses and ordinary banded matrices, similar exponential decay results for matrices also exists for general analytic functions \cite{benzi2007decay} and graph-induced banded matrices \cite{shin2020decentralized}.

In Lemma \ref{thm:band-mat-inverse-3}, we will use the notation $A_{S_R, S_C}$ to denote the submatrix obtained by selecting the blocks indexed by some set $S_R \times S_C$ while preserving their relative order. Specifically, consider a matrix $A \in \bR^{\omega n \times \omega n}$ formed by $\omega \times \omega$ blocks $A_{i,j} \in \bR^{n \times n}$. Let $i_1 < \cdots < i_{|S_R|}$ be the elements in $S_R \subseteq \{1, \ldots, \omega \}$, and $j_1 < \cdots < j_{|S_C|}$ be the elements in $S_C \subseteq \{1, \ldots, \omega \}$, both in ascending order. Then $A_{S_R, S_C} \in \bR^{|S_R|n \times |S_C|n}$ is defined as a block matrix
\[ A_{S_R, S_C} := \begin{bmatrix}
    A_{i_1, j_1} & A_{i_1, j_2} & \cdots & A_{i_1, j_{|S_C|}} \\
    A_{i_2, j_1} & A_{i_2, j_2} & \cdots & A_{i_2, j_{|S_C|}} \\
    \vdots & \vdots & \ddots & \vdots \\
    A_{i_{|S_R|}, j_1} & A_{i_{|S_R|}, j_2} & \cdots & A_{i_{|S_R|}, j_{|S_C|}} \\
\end{bmatrix}. \]
For a diagonal block matrix $D = diag(D_1, \ldots, D_{\omega})$ and a set $S \subseteq \{1, \ldots, \omega \}$, we use the shorthand notation $D_S := diag\left(D_{i_1}, D_{i_2}, \ldots, D_{i_{\abs{S}}}\right)$, where $i_1 < \ldots < i_{\abs{S}}$ are the elements in $S$.

\begin{lemma}\label{thm:band-mat-inverse-3}
Suppose $A$ is a positive definite matrix in $\mathbb{S}^{\omega n}$ formed by $\omega \times \omega$ blocks $A_{i,j} \in \mathbb{R}^{n\times n}$. Assume that $A$ is $q$-banded for an even positive integer $q$; i.e.,
\[A_{i,j} = 0, \forall \abs{i - j} > q/2.\]
Let $[a_0, b_0]$ ($b_0 > a_0 > 0$) be the smallest interval containing $\sigma(A)$, the spectrum of $A$. Suppose $D = diag(D_1, \ldots, D_\omega)$, where $D_i \in \mathbb{S}^n$ is positive semi-definite. Let $M = \left((A+D)^{-1}\right)_{S_R,S_C}$ as defined above, where $S_R, S_C \subseteq \{1, \ldots, \omega \}$. 
Then we have $\norm{M} \leq C \gamma^{\hat{d}}$, where
\[C = \frac{2}{a_0}, \gamma = \left(\frac{\sqrt{cond(A)} - 1}{\sqrt{cond(A)} + 1}\right)^{2/q}, \hat{d} = \min_{i \in S_R, j \in S_c} \abs{i - j}.\]
Here $cond(A) = b_0/a_0$ denotes the condition number of matrix $A$.
\end{lemma}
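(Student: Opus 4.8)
The plan is to reduce the block/shifted case to the classical scalar result of Demko--Moss--Smith (\cite{demko1984decay}, Proposition 2.2), which states that if $B$ is a $q$-banded positive definite matrix with spectrum in $[a_0,b_0]$, then $\abs{(B^{-1})_{ij}} \le C \gamma^{\abs{i-j}}$ with $C = \tfrac{1}{a_0}\max(1, \tfrac{(1+\sqrt{cond})^2}{2\, cond})$ or the cleaner bound $\le \tfrac{2}{a_0}\gamma^{\abs{i-j}}$, and $\gamma = \big(\tfrac{\sqrt{cond}-1}{\sqrt{cond}+1}\big)^{2/q}$. The two new ingredients I must handle are (i) the blocks are $n\times n$ rather than scalars, and (ii) there is an extra positive semidefinite block-diagonal term $D$ added before inverting. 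I would treat (ii) first: the key observation is that adding $D \succeq 0$ preserves the banded structure (since $D$ is block diagonal, hence $0$-banded, so $A+D$ is still $q$-banded) and preserves positive definiteness with $\sigma(A+D) \subseteq [a_0, b_0 + \norm{D}]$. But one cannot afford to let $b_0$ grow, so instead I would use the standard interpolation/Chebyshev-polynomial argument directly: for any polynomial $p$ of degree $< q/2 \cdot r$, the matrix $p(A+D)$ is $(qr/1)$... rather, I would invoke the characterization that $(A+D)^{-1}$ is well approximated in operator norm by polynomials in $(A+D)$ of bounded degree, and a degree-$s$ polynomial in a $q$-banded matrix is $qs$-banded. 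The precise quantitative statement needed is: for a positive definite $H$ with $\sigma(H)\subseteq[\alpha,\beta]$, the best degree-$s$ polynomial approximation to $1/x$ on $[\alpha,\beta]$ has sup-norm error at most $\tfrac{2}{\alpha}\big(\tfrac{\sqrt{\kappa}-1}{\sqrt{\kappa}+1}\big)^{s}$ with $\kappa = \beta/\alpha$; this is exactly the Chebyshev estimate underlying \cite{demko1984decay}.

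Here is the difficulty that makes (ii) nontrivial and is, I expect, the main obstacle: naively $cond(A+D)$ can be arbitrarily large (if $\norm{D}$ is huge), which would push $\gamma$ toward $1$ and destroy the decay. The fix is that we are \emph{not} asked for decay governed by $cond(A+D)$ — the statement claims decay with $\gamma$ depending only on $cond(A)$. So the real content is a monotonicity-type argument: adding $D\succeq0$ only helps. Concretely, I would write $(A+D)^{-1}$ and compare with $A^{-1}$ via the resolvent identity, or better, use the fact that for block-banded positive definite matrices the off-diagonal decay rate is controlled by the \emph{smallest} eigenvalue $a_0$ and the bandwidth alone once an \emph{upper} spectral bound is available through a polynomial that need not be optimal on all of $[a_0, b_0+\norm D]$. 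The cleanest route: apply the classical scalar DMS bound to the matrix $A+D$ but bound its own condition number crudely is wrong; instead, observe $\norm{((A+D)^{-1})_{S_R,S_C}} \le \norm{(A^{-1})_{S_R,S_C}}$ is false in general, so I would instead use the integral/Neumann-series representation $(A+D)^{-1} = A^{-1/2}(I + A^{-1/2}DA^{-1/2})^{-1}A^{-1/2}$ and note that $(I+A^{-1/2}DA^{-1/2})^{-1}$ has norm $\le 1$ but is not banded — so this representation alone doesn't give decay. The correct and standard argument (which I believe is what the appendix does) is simply: $A+D$ is $q$-banded, positive definite, with $\sigma_{\min}(A+D)\ge a_0$; apply the Chebyshev polynomial approximation of $1/x$ on $[a_0, \norm{A+D}]$, but crucially the Demko bound's \emph{leading constant} $2/a_0$ and the decay in \emph{distance} use only $a_0$ and $q$ once we are willing to state the rate in terms of $\sqrt{cond(A+D)}$; then we must separately argue the claimed $\gamma$ (with $cond(A)$) still works because $\big(\tfrac{\sqrt\kappa-1}{\sqrt\kappa+1}\big)$ is increasing in $\kappa$ — which would give the \emph{wrong} direction. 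I will therefore present the honest version: the lemma as literally stated must be using that the polynomial approximant for $1/x$ on $[a_0,b_0]$ already controls $(A+D)^{-1}$ because... this is the crux and I would resolve it by the following clean fact, which I'll make the heart of the proof.

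\textbf{Main steps, in order.}
\begin{enumerate}
\item Reduce to operator norm of a block submatrix: $\norm{M} = \norm{E_R (A+D)^{-1} E_C^\top}$ where $E_R, E_C$ are block-coordinate selection matrices, $\norm{E_R}=\norm{E_C}=1$.
\item State the polynomial approximation lemma: for a positive definite $H \in \mathbb{S}^N$ with $\sigma(H) \subseteq [a_0, b_0]$ and every integer $s \ge 0$ there is a polynomial $p_s$ of degree $s$ with $\norm{p_s(H) - H^{-1}} \le \tfrac{2}{a_0}\,\rho^{s+1}$ where $\rho = \tfrac{\sqrt{cond}-1}{\sqrt{cond}+1}$, $cond = b_0/a_0$; this is the Chebyshev estimate for $1/x$, as in \cite{demko1984decay}.
\item Apply step 2 with $H = A+D$: since $D \succeq 0$ is $0$-banded and $A$ is $q$-banded, $A+D$ is $q$-banded; $\sigma(A+D) \subseteq [a_0, b_0 + \norm{D}]$, but — here is the key reduction — I bound instead using the interval $[a_0, b_0]$ is \emph{not} valid, so I keep the honest interval $[a_0, \tilde b_0]$ with $\tilde b_0 = \norm{A+D}$, set $\tilde\kappa = \tilde b_0/a_0$, $\tilde\rho = \tfrac{\sqrt{\tilde\kappa}-1}{\sqrt{\tilde\kappa}+1}$.
\item Block-bandwidth bookkeeping: $p_s(A+D)$ is a polynomial of degree $s$ in a $q$-banded block matrix, hence it is $(qs)$-banded, so $\big(p_s(A+D)\big)_{ij} = 0$ whenever $\abs{i-j} > qs/2$. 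Therefore, choosing $s$ to be the largest integer with $qs/2 < \hat d$, i.e. $s = \lceil 2\hat d/q\rceil - 1$, the block $(i,j)$ of $p_s(A+D)$ vanishes for all $i \in S_R$, $j \in S_C$.
\item Conclude: for such $i,j$, $\norm{\big((A+D)^{-1}\big)_{ij}} = \norm{\big((A+D)^{-1} - p_s(A+D)\big)_{ij}} \le \norm{(A+D)^{-1} - p_s(A+D)} \le \tfrac{2}{a_0}\tilde\rho^{\,s+1} \le \tfrac{2}{a_0}\tilde\rho^{\,2\hat d/q}$; finally replace $\tilde\rho$ by $\gamma := \rho^{2/q} = \big(\tfrac{\sqrt{cond(A)}-1}{\sqrt{cond(A)}+1}\big)^{2/q}$ using the monotonicity point that must be reconciled — and I would then verify, in the write-up, the exact inequality $\tilde\rho \le \rho$ does \emph{not} hold, so the lemma as stated presumably intends the spectral interval of $A+D$ to still be contained in $[a_0,b_0]$, which happens precisely when $D$ is absorbed into the bound on $b_0$; I would state this hypothesis explicitly or, alternatively, replace $cond(A)$ by $cond(A+D)$ in the conclusion. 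Either way, the bound $\norm{M} \le C\gamma^{\hat d}$ with $C = 2/a_0$ follows by stitching steps 1--5, and the submultiplicativity $\norm{M} \le \sum$ of blocks is avoided by working directly with the operator-norm-to-operator-norm estimate in steps 2 and 5.
\end{enumerate}

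The main obstacle, to repeat, is the interaction between the additive term $D$ and the claimed dependence on $cond(A)$ rather than $cond(A+D)$ in $\gamma$; everything else (block bookkeeping, Chebyshev approximation of $1/x$, polynomial-preserves-bandwidth) is standard and follows \cite{demko1984decay} closely. In the final write-up I would either (a) add the mild standing assumption that $\sigma(A+D)\subseteq[a_0,b_0]$ — natural in our application since $A+D$ is the full Hessian whose spectrum we control anyway — or (b) restate the conclusion with $cond(A+D)$, which is what the clean argument actually delivers; the downstream use in Theorem \ref{thm:SOCO-sensitivity} only needs \emph{some} exponential rate bounded away from $1$, so either form suffices.
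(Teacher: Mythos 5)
Your proposal correctly reproduces the Demko--Moss--Smith machinery (Chebyshev approximation of $1/x$, polynomials preserve block bandedness, operator-norm bound on the submatrix), and you have honestly located the crux: the additive term $D \succeq 0$ can blow up $cond(A+D)$, so applying the classical bound to $A+D$ directly gives a decay rate governed by $cond(A+D)$, not $cond(A)$. But you stop there and propose to either add the hypothesis $\sigma(A+D)\subseteq[a_0,b_0]$ or weaken the conclusion to $cond(A+D)$. That is a genuine gap: the lemma as stated is true without any extra hypothesis, and the missing idea is a diagonal congruence. The paper sets $N := a_0 I + D$ (block diagonal, positive definite) and considers $H := N^{-1/2}(A+D)N^{-1/2}$. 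Since $A \succeq a_0 I$, one gets $x^\top H x = y^\top (A+D) y \geq y^\top (a_0 I + D) y = \norm{x}^2$ with $y = N^{-1/2}x$; and since $A \preceq b_0 I$ and $N^{-1} \preceq a_0^{-1} I$, one gets $x^\top H x \leq \norm{x}^2 + (b_0-a_0)\,x^\top N^{-1}x \leq (b_0/a_0)\norm{x}^2$. Hence $I \preceq H \preceq (b_0/a_0) I$, so $cond(H) \leq cond(A)$ \emph{independently of $\norm{D}$}, and $H$ is still $q$-banded because $N^{-1/2}$ is block diagonal. Applying the $D=0$ case to $H$ and writing $\left((A+D)^{-1}\right)_{S_R,S_C} = (N^{-1/2})_{S_R,S_R}\,(H^{-1})_{S_R,S_C}\,(N^{-1/2})_{S_C,S_C}$ with $\norm{(a_0 I + D_S)^{-1/2}} \leq a_0^{-1/2}$ recovers exactly the claimed constant $C = 2/a_0$ and rate $\gamma$ depending only on $cond(A)$.

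This matters downstream, so your fallback options are not harmless. In the proof of Theorem \ref{thm:SOCO-sensitivity} the Hessian is split as $M = M_a + M_b$ where $M_a$ is the tri-diagonal part with $\mu I \preceq M_a \preceq (\mu + 2\ell)I$ and $M_b \succeq 0$ is the block-diagonal contribution of the hitting costs, which are \emph{not} assumed strongly smooth (the paper explicitly highlights this, and in the LTV reduction the terminal block can even come from the indicator function of the origin). So $\norm{M_b}$, and hence $cond(A+D)$, is not controlled by the assumptions, and neither of your two fixes would let Theorem \ref{thm:SOCO-sensitivity} go through in the generality the paper needs. The congruence trick is precisely what decouples the decay rate from the curvature of the hitting costs.
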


\begin{proof}[Proof of Lemma \ref{thm:band-mat-inverse-3}]
    We first prove the lemma for the the special case where $D = 0$. 
    
    For the case $\hat{d} \neq 0$, write $\hat{d} = \upsilon q/2 + \kappa$ for integers $\upsilon, \kappa$ satisfying $\upsilon \geq 0, 1 \leq \kappa \leq q/2$. Following the same approach as the proof of Proposition 2.2 in \cite{demko1984decay}, we see that there exists a polynomial $p_\upsilon$ of degree $\upsilon$, where
    \[\norm{A^{-1} - p_\upsilon(A)} \leq \frac{1}{a_0}\cdot \frac{\left(1 + \sqrt{cond(A)}\right)^2}{2cond(A)} \gamma^{\hat{d}}\leq C \gamma^{\hat{d}},\]
    where the last inequality holds because $cond(A) \geq 1$.
    
    Since $p_v$ has degree $v < \frac{2 \hat{d}}{q}$ and $A$ is $q$-banded, the matrix $p_{\upsilon}(A)$ satisfies $\left(p_{\upsilon}(A)\right)_{i, j} = 0$ for any $i \in S_R$ and $j \in S_C$.
    We then obtain
    \[\norm{P} = \norm{\left(A^{-1}\right)_{S_R,S_C}} = \norm{\left(A^{-1} - p_{\upsilon}(A)\right)_{S_R,S_C}} \leq \norm{A^{-1} - p_{\upsilon}(A)} \leq C \gamma^{\hat{d}},\]
    due to the fact that the 2-norm of a submatrix cannot be larger than that of the original matrix.
        
    For the case $\hat{d} = 0$, as $\norm{P} = \norm{\left(A^{-1}\right)_{S_R, S_C}} \leq \norm{A^{-1}} = \frac{1}{a_0} \leq C$, the result trivially holds.
    
    Now we show the general case (where $D_i \succeq 0$ for $1 \le i \le n$) through a reduction to the special case. Define a positive definite matrix $N := (a_0 I + D) \in \mathbb{S}^{n\omega}$, and then define matrix $H \in \mathbb{S}^{n\omega}$ as
    \[H := N^{-\frac{1}{2}} (A + D) N^{-\frac{1}{2}}.\]
    We start by showing that $I \preceq H \preceq \frac{b_0}{a_0}\cdot I$. For any $x \in \mathbb{R}^{n\omega}$, we observe that
    \begin{align*}
        x^{\top} H x &= x^{\top} N^{-\frac{1}{2}}A N^{-\frac{1}{2}}x + x^{\top} N^{-\frac{1}{2}}D N^{-\frac{1}{2}}x\\
        &\geq x^{\top} N^{-\frac{1}{2}} a_0 I N^{-\frac{1}{2}}x + x^{\top} N^{-\frac{1}{2}}D N^{-\frac{1}{2}}x\\
        &= x^{\top} N^{-\frac{1}{2}} (a_0 I + D) N^{-\frac{1}{2}}x\\
        &= \norm{x}^2.
    \end{align*}
    For the other inequality, note that $0 \prec N^{-1} \preceq \frac{1}{a_0} \cdot I$, so we have
    \begin{align*}
        x^{\top} H x &= x^{\top} N^{-\frac{1}{2}}A N^{-\frac{1}{2}}x + x^{\top} N^{-\frac{1}{2}}D N^{-\frac{1}{2}}x\\
        &\leq x^{\top} N^{-\frac{1}{2}} b_0 I N^{-\frac{1}{2}}x + x^{\top} N^{-\frac{1}{2}}D N^{-\frac{1}{2}}x\\
        &= x^{\top} N^{-\frac{1}{2}} (a_0 I + D) N^{-\frac{1}{2}}x + (b_0 - a_0) x^{\top} N^{-1} x\\
        &\leq \norm{x}^2 + \frac{b_0 - a_0}{a_0} \cdot \norm{x}^2\\
        &= \frac{b_0}{a_0}\cdot \norm{x}^2.
    \end{align*}
    Thus $I \preceq H \preceq \frac{b_0}{a_0} \cdot I$, which gives $cond(H) \leq \frac{b_0}{a_0} = cond(A)$. Note that $H$ is also $q$-banded, so we can apply the result of the special case ($D_i = 0, i=1,\cdots,n$) to obtain that
    \[\norm{(H^{-1})_{S_R,S_C}} \leq 2 \gamma_H^{\hat{d}} \leq 2 \gamma^{\hat{d}},\]
    where $\gamma_H = \left(\frac{\sqrt{cond(H)} - 1}{\sqrt{cond(H)} + 1}\right)^{2/q} \leq \gamma$. Using this inequality, we conclude that
    \begin{align*}
        \norm{P} = \norm{((A + D)^{-1})_{S_R,S_C}} &= \norm{\left(N^{-\frac{1}{2}} H^{-1} N^{-\frac{1}{2}}\right)_{S_R,S_C}}\\
        &\leq \norm{(a_0 I + D_{S_R})^{-\frac{1}{2}}} \cdot \norm{(H^{-1})_{S_R,S_C}} \cdot \norm{(a_0 I + D_{S_C})^{-\frac{1}{2}}}\\
        &\leq \frac{1}{a_0}\norm{(H^{-1})_{S_R,S_C}}\\
        &\leq C \gamma^{\hat{d}}.
    \end{align*}
    Here we apply the fact that $\norm{(a_0 I + D_{S})^{-\frac{1}{2}}} \leq \frac{1}{\sqrt{a_0}}$ since $D_S \succeq 0$.
\end{proof}

Now we return to the proof of Theorem \ref{thm:SOCO-sensitivity}. The intuition behind the proof is discussed in Section \ref{sec:perturbation-soco}.

%\noindent\emph{Proof of Theorem \ref{thm:SOCO-sensitivity}.} 
Let $e = [e_0^\top, \pi^\top, e_p^\top]^\top$ be a vector where $e_0, e_p \in \mathbb{R}^n$ and
\[\pi = [\pi_0, \pi_1, \ldots, \pi_{p-1}],\]
for $\pi_i \in \mathbb{R}^{r}, i = 0, 1, \ldots, p-1$.  %\guannan{A bit unclear...}\yiheng{Check this one.} 
Let $\theta$ be an arbitrary real number. Define function $\hat{h}: \mathbb{R}^{(p - 1)\times n} \times \mathbb{R}^n \times \mathbb{R}^{p \times r} \times \mathbb{R}^n \to \mathbb{R}_+$ as
\[\hat{h}(\hat{x}_{1:p-1}, \hat{x}_0, \hat{w}_{0:p-1}, \hat{x}_p) = \sum_{\tau = 1}^{p-1} \hat{f}_{\tau}(\hat{x}_{\tau}) + \sum_{\tau = 1}^p \hat{c}_{\tau}(\hat{x}_{\tau}, \hat{x}_{\tau - 1}, \hat{w}_{\tau-1}).\]
To simplify the notation, we use $\hat{\zeta}$ to denote the tuple of system parameters, i.e.,
\[\hat{\zeta} := (\hat{x}_0, \hat{w}_{0:p-1}, \hat{x}_p).\]

From our construction, we know that $\hat{h}$ is $\mu$-strongly convex in $\hat{x}_{1:p-1}$, so we use the decomposition $\hat{h} = \hat{h}_a + \hat{h}_b$, where
\begin{align*}
    \hat{h}_a(\hat{x}_{1:p-1}, \hat{\zeta}) &= \sum_{\tau = 1}^{p-1} \frac{\mu}{2}\norm{\hat{x}_{\tau}}^2 + \sum_{\tau = 1}^p \hat{c}_{\tau}(\hat{x}_{\tau}, \hat{x}_{\tau - 1}, \hat{w}_{\tau - 1}),\\
    \hat{h}_b(\hat{x}_{1:p-1}, \hat{\zeta}) &= \sum_{\tau = 1}^{p-1} \left(\hat{f}_{\tau}(\hat{x}_{\tau}) - \frac{\mu}{2}\norm{\hat{x}_{\tau}}^2\right).
\end{align*}

Since $\hat{\psi}(\hat{\zeta} + \theta e)$ is the minimizer of convex function $\hat{h}(\cdot, \hat{\zeta} + \theta e)$, we see that
\[\nabla_{\hat{x}_{1:p-1}} \hat{h}(\hat{\psi}(\hat{\zeta} + \theta e), \hat{\zeta} + \theta e) = 0.\]
Taking the derivative with respect to $\theta$ gives that
\begin{align*}
    \nabla_{\hat{x}_{1:p-1}}^2 \hat{h}(\hat{\psi}(\hat{\zeta} + \theta e), \hat{\zeta} + \theta e) \frac{d}{d\theta}\hat{\psi}(\hat{\zeta} + \theta e) ={}& - \nabla_{\hat{x}_0} \nabla_{\hat{x}_{1:p-1}} \hat{h}(\hat{\psi}(\hat{\zeta} + \theta e), \hat{\zeta} + \theta e) e_0\\
    &- \nabla_{\hat{x}_p} \nabla_{\hat{x}_{1:p-1}} \hat{h}(\hat{\psi}(\hat{\zeta} + \theta e), \hat{\zeta} + \theta e) e_p\\
    &- \sum_{\tau = 0}^{p-1}\nabla_{w_\tau} \nabla_{\hat{x}_{1:p-1}} \hat{h}(\hat{\psi}(\hat{\zeta} + \theta e), \hat{\zeta} + \theta e) \pi_\tau.
\end{align*}
To simplify the notation, we define
\begin{align*}
    M &:= \nabla_{\hat{x}_{1:p-1}}^2 \hat{h}(\hat{\psi}(\hat{\zeta} + \theta e), \hat{\zeta} + \theta e), \text{which is a }(p-1)\times (p-1) \text{ block matrix},\\
    R^{(0)} &:= - \nabla_{\hat{x}_0} \nabla_{\hat{x}_{1:p-1}} \hat{h}(\hat{\psi}(\hat{\zeta} + \theta e), \hat{\zeta} + \theta e), \text{which is a }(p-1)\times 1 \text{ block matrix},\\
    R^{(p)} &:= - \nabla_{\hat{x}_p} \nabla_{\hat{x}_{1:p-1}} \hat{h}(\hat{\psi}(\hat{\zeta} + \theta e), \hat{\zeta} + \theta e), \text{which is a }(p-1)\times 1 \text{ block matrix},\\
    K^{(\tau)} &:= - \nabla_{w_\tau} \nabla_{\hat{x}_{1:p-1}} \hat{h}(\hat{\psi}(\hat{\zeta} + \theta e), \hat{\zeta} + \theta e), \forall 0 \leq \tau \leq p-1, \text{which are }(p-1)\times 1 \text{ block matrices},
\end{align*}
where in $M$, $R^{(0)}$, $R^{(p)}$, the block size is $n\times n$; in $K^{(\tau)}$, the block size is $n\times r$. Hence we can write
\[\frac{d}{d\theta}\hat{\psi}(\hat{\zeta} + \theta e) = M^{-1} \left(R^{(0)} e_0 + R^{(p)} e_p + \sum_{\tau=0}^{p-1} K^{(\tau)} \pi_\tau\right).\]
Recall that $R^{(0)}, R^{(p)}$ are $(p-1) \times 1$ block matrices with block size $n \times n$; $\{K^{(\tau)}\}_{0\leq \tau \leq p-1}$ are $(p-1) \times 1$ block matrices with block size $n \times r$. For $R^{(0)}$ and $K^{(0)}$, only the $(1, 1)$-th blocks are non-zero. For $R^{(p)}$ and $K^{(p-1)}$, only the $(p-1, 1)$-th blocks are non-zero. For $K^{(\tau)}, \tau = 1, \ldots, p-2$, only the $(\tau, 1)$-th and $(\tau+1, 1)$-th blocks are non-zero. Hence we see that
\begin{align*}
    \frac{d}{d\theta}\hat{\psi}(\hat{\zeta} + \theta e)_h ={}& (M^{-1})_{h, 1} R^{(0)}_{1, 1} e_0 + (M^{-1})_{h, p-1} R^{(p)}_{p-1, 1} e_p\\
    &+ (M^{-1})_{h, 1} K^{(0)}_{1, 1} \pi_0 + (M^{-1})_{h, p-1} K^{(p-1)}_{p-1, 1} \pi_{p-1}\\
    &+ \sum_{\tau=1}^{p-2} (M^{-1})_{h, \tau:\tau+1} K^{(\tau)}_{\tau:\tau+1, 1} \pi_\tau.
\end{align*}
Since the switching costs $c_\tau(\cdot, \cdot, \cdot), \tau = 1, \ldots, p$ are $\ell$-strongly smooth, we know that the norms of
\[R^{(0)}_{1, 1}, R^{(p)}_{p-1, 1}, K^{(0)}_{1, 1}, K^{(p-1)}_{p-1, 1}, \text{ and } \{K^{(\tau)}_{\tau:\tau+1, 1}\}_{1\leq \tau \leq p-2}\]
are all upper bounded by $\ell$. % \guannan{For $K^{(\tau)}_{\tau:\tau+1, 1}$, this involves the sub-Hessian of two consecutive $\hat{c}_\tau$. Should the upper bound be $2\ell$?  }\yiheng{I think not. Because $w_\tau$ is involved in only one switching cost.} 
Taking norms on both sides gives that
\begin{align}\label{equ:thm:SOCO-sensitivity:e1}
    \norm{\frac{d}{d\theta}\hat{\psi}(\hat{\zeta} + \theta e)_h} \leq{}& \ell\norm{(M^{-1})_{h, 1}} \norm{e_0} + \ell\norm{(M^{-1})_{h, p-1}} \Vert e_p \Vert \nonumber\\
    &+ \ell\norm{(M^{-1})_{h, 1}} \norm{\pi_0} + \ell\norm{(M^{-1})_{h, p-1}} \Vert \pi_{p-1} \Vert \nonumber\\
    &+ \sum_{\tau=1}^{p-2} \ell\norm{(M^{-1})_{h, \tau:\tau+1}} \norm{\pi_\tau}.
\end{align}
Note that $M$ can be decomposed as $M = M_a + M_b$, where
\begin{align*}
    M_a &:= \nabla_{1:p-1}^2 \hat{h}_a(\hat{\psi}(\hat{\zeta} + \theta e), \hat{\zeta} + \theta e),\\
    M_b &:= \nabla_{1:p-1}^2 \hat{h}_b(\hat{\psi}(\hat{\zeta} + \theta e), \hat{\zeta} + \theta e).
\end{align*}
Since $M_a$ is block tri-diagonal and satisfies $(\mu + 2\ell) I \succeq M_a \succeq \mu I$, and $M_b$ is block diagonal and satisfies $M_b \succeq 0$, we obtain the following using Lemma \ref{thm:band-mat-inverse-3}: %\guannan{double check whether it should be $\ell$ or $2\ell$? }\yiheng{This should be $2\ell$.}
\[\norm{(M^{-1})_{h, 1}} \leq \frac{2}{\mu} \decayvar_0^{h-1}, \norm{(M^{-1})_{h, p-1}} \leq \frac{2}{\mu} \decayvar_0^{p-h-1}, \text{ and } \norm{(M^{-1})_{h, \tau:\tau+1}} \leq \frac{2}{\mu} \decayvar_0^{\abs{h - \tau} - 1},\]
where $\decayvar_0 :={(\sqrt{cond(M_a)} - 1)}/{(\sqrt{cond(M_a)} + 1)} = 1 - 2 \cdot \left(\sqrt{1 + (2\ell/\mu)} + 1\right)^{-1}$.

Substituting this into \eqref{equ:thm:SOCO-sensitivity:e1}, we see that
\[\norm{\frac{d}{d\theta}\hat{\psi}(\hat{\zeta} + \theta e)_h} \leq C_0\left(\decayvar_0^{h-1}\norm{e_0} + \sum_{\tau=0}^{p-1} \decayvar_0^{\abs{h - \tau} - 1}\norm{\pi_\tau} + \decayvar_0^{p-h-1} \Vert e_p \Vert \right),\]
where $C_0 = (2\ell)/\mu$.

Finally, by integration we have
\begin{align*}
    \norm{\hat{\psi}(\hat{\zeta})_h - \hat{\psi}(\hat{\zeta} + e)_h} ={}& \norm{\int_0^1 \frac{d}{d\theta}\hat{\psi}(\hat{\zeta} + \theta e)_h d\theta}\\
    \leq{}& \int_0^1 \norm{\frac{d}{d\theta}\hat{\psi}(\hat{\zeta} + \theta e)_h} d\theta\\
    \leq{}& C_0\left(\decayvar_0^{h-1}\norm{e_0} + \sum_{\tau=0}^{p-1} \decayvar_0^{\abs{h - \tau} - 1}\norm{\pi_\tau} + \decayvar_0^{p-h-1} \Vert e_p \Vert \right).
\end{align*}
This finishes the proof of Theorem \ref{thm:SOCO-sensitivity}. %\qed

\section{Proof of Lemma \ref{lemma:implicit-switching-cost}}\label{Appendix:lemma:implicit-switching-cost}
\label{sec:proof-implicit-switching-cost}

Before showing Lemma \ref{lemma:implicit-switching-cost}, we first show a general result about the properties of the optimal solution and optimal value of an unconstrained optimization problem.

\begin{lemma}\label{lemma:smoothness-opt-value}
Suppose function $f(x, y)$ is convex and $L$-strongly smooth in $(x, y)$, $\mu$-strongly convex in $y$, and continuously differentiable. Define $y^*(x) := \argmin_y f(x, y)$ and $g(x) := \min_y f(x, y)$. Then, function $y^*$ is $\frac{L}{\mu}$-Lipschitz, and function $g$ is $\left(L + \frac{L^2}{\mu}\right)$-strongly smooth.
\end{lemma}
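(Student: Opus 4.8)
The plan is to use the first-order optimality condition $\nabla_y f(x, y^*(x)) = 0$ together with the implicit function theorem to control the derivative of $y^*$, and then to differentiate $g(x) = f(x, y^*(x))$ twice (or, more cleanly, bound $\nabla g$ directly via the envelope theorem and estimate its Lipschitz constant). First I would establish that $y^*$ is well-defined and differentiable: since $f$ is $\mu$-strongly convex in $y$ and continuously differentiable, $y^*(x)$ exists and is unique, and since $f$ is twice differentiable (or at least $C^1$ with the needed regularity from $L$-smoothness), the stationarity equation $\nabla_y f(x, y^*(x)) = 0$ can be differentiated in $x$. Differentiating gives $\nabla_{xy}^2 f + \nabla_{yy}^2 f \cdot D y^*(x) = 0$, hence $D y^*(x) = -\left(\nabla_{yy}^2 f\right)^{-1} \nabla_{yx}^2 f$. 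Using $\nabla_{yy}^2 f \succeq \mu I$ and $\norm{\nabla_{yx}^2 f} \le L$ (a consequence of $L$-strong smoothness of $f$ in $(x,y)$, which bounds the full Hessian and hence each block), we get $\norm{D y^*(x)} \le L/\mu$, so $y^*$ is $(L/\mu)$-Lipschitz.

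Next I would handle $g$. By the envelope theorem (Danskin), $\nabla g(x) = \nabla_x f(x, y^*(x))$, since the partial derivative with respect to $y$ vanishes at the minimizer. To bound the Lipschitz constant of $\nabla g$, take two points $x, x'$ and write
\begin{align*}
    \norm{\nabla g(x) - \nabla g(x')} &= \norm{\nabla_x f(x, y^*(x)) - \nabla_x f(x', y^*(x'))}\\
    &\le L\left(\norm{x - x'} + \norm{y^*(x) - y^*(x')}\right)\\
    &\le L\left(\norm{x - x'} + \frac{L}{\mu}\norm{x - x'}\right) = \left(L + \frac{L^2}{\mu}\right)\norm{x - x'},
\end{align*}
where the first inequality uses that $\nabla_x f$ is $L$-Lipschitz in $(x,y)$ jointly (again from $L$-strong smoothness of $f$), and the second uses the Lipschitz bound on $y^*$ just established. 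Since $g$ is also convex (being a partial minimization of a jointly convex function) and its gradient is $\left(L + L^2/\mu\right)$-Lipschitz, $g$ is $\left(L + L^2/\mu\right)$-strongly smooth.

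The main obstacle, and the only place requiring care, is justifying the differentiability of $y^*$ and the interchange of differentiation and minimization rigorously — i.e., invoking the implicit function theorem — and making sure the blockwise bound $\norm{\nabla_{yx}^2 f} \le L$ is correctly extracted from the hypothesis that the \emph{full} Hessian of $f$ has norm at most $L$ (this holds because any off-diagonal block of a symmetric matrix has operator norm at most that of the whole matrix). If one prefers to avoid second derivatives entirely, the Lipschitz bound on $y^*$ can instead be derived purely from first-order conditions: strong convexity in $y$ gives $\mu\norm{y^*(x) - y^*(x')}^2 \le \langle \nabla_y f(x, y^*(x)) - \nabla_y f(x, y^*(x')), y^*(x) - y^*(x')\rangle$, and since $\nabla_y f(x, y^*(x)) = 0 = \nabla_y f(x', y^*(x'))$, this equals $\langle \nabla_y f(x, y^*(x')) - \nabla_y f(x', y^*(x')) - \nabla_y f(x, y^*(x)) + \nabla_y f(x, y^*(x')), \ldots\rangle$; rearranging and applying $L$-smoothness in $x$ yields $\mu \norm{y^*(x) - y^*(x')} \le L\norm{x - x'}$, the same bound without appealing to the implicit function theorem. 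I would present this first-order argument as it is cleaner and uses only the stated hypotheses directly.
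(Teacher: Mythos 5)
Your proposal is correct and, in the form you say you would actually present it (the first-order argument via $\nabla_y f(x,y^*(x))=0=\nabla_y f(x',y^*(x'))$, strong convexity, and $L$-smoothness, followed by the envelope-theorem identity $\nabla g(x)=\nabla_x f(x,y^*(x))$ and the triangle inequality), it is essentially identical to the paper's proof. You were right to prefer that route over the implicit-function-theorem version, since the lemma only assumes $f$ is continuously differentiable, not twice differentiable.
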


\begin{proof}[Proof of Lemma \ref{lemma:smoothness-opt-value}]
    Let $y^*(x) = \argmin_y f(x, y)$. This function is well-defined since the strong convexity of $f(x, y)$ in $y$ guarantees that $y^*(x)$ is unique. We see that for all $x, x'$,
    \[\nabla_y f(x, y^*(x)) = 0 \text{ and } \nabla_y f(x', y^*(x')) = 0.\]
    Using these equalities, we obtain 
    \begin{align*}
        0 ={}& \langle y^*(x) - y^*(x'), \nabla_y f(x, y^*(x)) - \nabla_y f(x', y^*(x'))\rangle\\
        ={}& \langle y^*(x) - y^*(x'), \nabla_y f(x, y^*(x)) - \nabla_y f(x, y^*(x'))\rangle\\
        &+ \langle y^*(x) - y^*(x'), \nabla_y f(x, y^*(x')) - \nabla_y f(x', y^*(x'))\rangle\\
        \geq{}& \mu \norm{y^*(x) - y^*(x')}^2 - \norm{y^*(x) - y^*(x')} \cdot \norm{\nabla_y f(x, y^*(x')) - \nabla_y f(x', y^*(x'))},
    \end{align*}
    where we use the fact that a $\mu$-strongly convex function $h$ satisfies
    \[\langle a - b, \nabla h(a) - \nabla h(b) \rangle \geq \mu \norm{a - b}^2,~ \forall a, b\]
    and the Cauchy-Schwarz inequality in the last inequality. Since $f$ is $L$-strongly smooth, we see that
    \begin{align*}
        \norm{y^*(x) - y^*(x')} \leq \frac{1}{\mu} \norm{\nabla_y f(x, y^*(x')) - \nabla_y f(x', y^*(x'))} \leq \frac{L}{\mu} \norm{x - x'},
    \end{align*}
    which implies function $y^*$ is $\frac{L}{\mu}$-Lipschitz.
    
    Note that the gradient of $g$ is given by
    \[\nabla g(x) = \nabla_x f(x, y^*(x)) + \nabla_y f(x, y^*(x)) \frac{\partial y^*(x)}{\partial x} = \nabla_x f(x, y^*(x)),\]
    because $\nabla_y f(x, y^*(x)) = 0$. Hence we obtain
    \begin{align*}
        \norm{\nabla g(x) - \nabla g(x')} \leq{}& \norm{\nabla_x f(x, y^*(x)) - \nabla_x f(x', y^*(x')) }\\
        \leq{}& \norm{\nabla_x f(x, y^*(x)) - \nabla_x f(x', y^*(x))} +\norm{\nabla_x f(x', y^*(x)) - \nabla_x f(x', y^*(x')) }\\
        \leq{}& L \norm{x - x'} + L \norm{y^*(x) - y^*(x')}\\
        \leq{}& \left(L + \frac{L^2}{\mu}\right)\norm{x - x'},
    \end{align*}
    where we use the $L$-strong smoothness of $f$ and the $\frac{L}{\mu}$-Lipschitzness of $y^*$.
\end{proof}

Besides, we also need the following lemma about the upper bound of the induced 2-norm of a block matrix.

\begin{lemma}\label{lemma:mat-norm-upper}
Suppose $A$ is a $\omega_1 \times \omega_2$ block matrix. Let $A_{ij}$ denote the $(i, j)$-th block of $A$, $1 \leq i \leq \omega_1, 1 \leq j \leq \omega_2$. The induced $2$-norm of $A$ is upper bounded by
\[\norm{A} \leq \left(\sum_{i=1}^{\omega_1} \sum_{j=1}^{\omega_2} \norm{A_{ij}}^2\right)^{\frac{1}{2}}.\]
\end{lemma}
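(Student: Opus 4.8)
The plan is a direct estimate via the definition of the induced $2$-norm, applied to a block-partitioned test vector. Write $\omega_2 = $ (number of block columns) and let $v \in \mathbb{R}^N$ (with $N$ the total column dimension) be partitioned conformably with the block columns of $A$, i.e. $v = (v_1^\top, \ldots, v_{\omega_2}^\top)^\top$ where $v_j$ has the same dimension as the $j$-th block column. Then the $i$-th block of $Av$ is $(Av)_i = \sum_{j=1}^{\omega_2} A_{ij} v_j$, and $\norm{v}^2 = \sum_{j} \norm{v_j}^2$, $\norm{Av}^2 = \sum_i \norm{(Av)_i}^2$.

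First I would bound each output block by the triangle inequality and the definition of the induced norm of $A_{ij}$: $\norm{(Av)_i} \le \sum_{j=1}^{\omega_2} \norm{A_{ij}}\,\norm{v_j}$. Next, apply the Cauchy--Schwarz inequality to the right-hand side, treating it as an inner product of the vectors $(\norm{A_{ij}})_j$ and $(\norm{v_j})_j$, to get
\[
\norm{(Av)_i}^2 \le \left(\sum_{j=1}^{\omega_2} \norm{A_{ij}}^2\right)\left(\sum_{j=1}^{\omega_2} \norm{v_j}^2\right) = \left(\sum_{j=1}^{\omega_2} \norm{A_{ij}}^2\right)\norm{v}^2.
\]
Summing over $i = 1, \ldots, \omega_1$ yields $\norm{Av}^2 \le \left(\sum_{i=1}^{\omega_1}\sum_{j=1}^{\omega_2}\norm{A_{ij}}^2\right)\norm{v}^2$. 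Taking square roots and then the supremum over $v \ne 0$ gives exactly the claimed bound $\norm{A} \le \left(\sum_{i,j}\norm{A_{ij}}^2\right)^{1/2}$.

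There is no real obstacle here; the only point requiring a line of care is that the partition of the test vector $v$ must be chosen consistent with the block-column structure of $A$ so that the products $A_{ij}v_j$ and the Pythagorean identities $\norm{v}^2 = \sum_j \norm{v_j}^2$, $\norm{Av}^2 = \sum_i \norm{(Av)_i}^2$ are valid. Everything else is the triangle inequality and Cauchy--Schwarz. (One could equivalently phrase the conclusion as: the spectral norm of a block matrix is at most the Frobenius norm of the matrix of blockwise spectral norms.)
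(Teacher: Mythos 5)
Your proposal is correct and follows essentially the same argument as the paper: bound each output block $\norm{\sum_j A_{ij}v_j}$ by $\sum_j \norm{A_{ij}}\norm{v_j}$, apply Cauchy--Schwarz over $j$, and sum over $i$ with $\norm{v}=1$. No differences worth noting.
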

\begin{proof}[Proof of Lemma \ref{lemma:mat-norm-upper}]
For unit vector $x$, we have the following:
\begin{align*}
    \norm{A x}^2 &= \sum_{i = 1}^{\omega_1} \norm{\sum_{j=1}^{\omega_2} A_{ij} x_j}^2\\
    &\leq \sum_{i=1}^{\omega_1}\left(\sum_{j=1}^{\omega_2} \norm{A_{ij}} \cdot \norm{x_j}\right)^2\\
    &\leq \sum_{i=1}^{\omega_1}\left(\sum_{j=1}^{\omega_2} \norm{A_{ij}}^2\right)\left(\sum_{j=1}^{\omega_2} \norm{x_j}^2\right)\\
    &= \sum_{i=1}^{\omega_1} \sum_{j=1}^{\omega_2} \norm{A_{ij}}^2.
\end{align*}
where we use the definition of the induced 2-norm in the first inequality and the Cauchy-Schwarz inequality in the second inequality.
\end{proof}

Now we come back to the proof of Lemma \ref{lemma:implicit-switching-cost}. To apply Lemma \ref{lemma:smoothness-opt-value}, we only need to show that $\xi_t^p(x, \zeta, z)$ can be formulated as an unconstrained minimization problem of a strongly smooth objective function which is also strongly convex in its optimization variable.

 %\emph{Proof of Lemma \ref{lemma:implicit-switching-cost}.}   
 To simplify the notation, we define the stacked state vector $y$, control vector $v$, and disturbance vector $\zeta$ as
    \[y = \begin{bmatrix}y_0\\ y_{1}\\ \vdots \\ y_{p}\end{bmatrix}, v = \begin{bmatrix}v_0\\ v_{1}\\ \vdots \\ v_{p-1}\end{bmatrix}, \zeta = \begin{bmatrix}\zeta_0\\ \zeta_{1}\\ \vdots \\ \zeta_{p-1}\end{bmatrix}.\]
    Recall that the transition matrix $\Phi(t_2, t_1)$ is defined as
    \[\Phi(t_2, t_1) := \begin{cases}
        A_{t_2 - 1} A_{t_2 - 2} \cdots A_{t_1} & \text{ if }t_2 > t_1\\
        I & \text{ if } t_2 \leq t_1
    \end{cases}.\]
    Using these notations, we can express the state vector $y$ as an affine function of initial state $x$, control $v$, and disturbance $\zeta$:
    \begin{equation}\label{lemma:implicit-switching-cost:e0}
        y = S^x x + S^v v + S^\zeta \zeta,
    \end{equation}
    where
    \begin{equation*}
        S^\zeta := \begin{bmatrix}
        0 & 0 & \cdots & 0\\
        \Phi(t+1, t+1) & 0 & \cdots & 0\\
        \Phi(t+2, t+1) & \Phi(t+2, t+2) & \cdots & 0\\
        \vdots & \vdots & \ddots & \vdots\\
        \Phi(t+p, t+1) & \Phi(t+p, t+2) & \cdots & \Phi(t+p, t+p)
        \end{bmatrix}, S^x = \begin{bmatrix}
        \Phi(t, t)\\
        \Phi(t+1, t)\\
        \Phi(t+2, t)\\
        \vdots\\
        \Phi(t+p, t)
        \end{bmatrix},
    \end{equation*}
    and $S^v = S^\zeta \cdot diag(B_t, \ldots, B_{t+p-1})$.
    
    For simplicity, we use the shorthand notation $M := M(t, p)$ for the controllability matrix and
    \[R^\zeta := [\Phi(t+p, t+1), \Phi(t+p, t+2), \ldots, \Phi(t+p, t+p)]\]
    throughout the proof. Since $p$ is greater than the controllability index $d$, we know $M$ has full row rank. The dynamical constraints for \eqref{equ:switching-cost}, which is identical to the constraints of \eqref{opt:with-terminal}, can be written as
    \[M v = z - \Phi(t+p, t) x - R^\zeta \zeta.\]
    Because $M$ has full row rank, we let $M^\dagger = M^\top \left(M M^\top\right)^{-1}$ be the Moore-Penrose pseudo-inverse of $M$. Let $V \in \mathbb{R}^{(mp) \times (mp - n)}$ be a matrix whose columns constitute an orthonormal basis of $ker(M)$. Then, we can express any feasible control vector $v$ as
    \begin{equation}\label{lemma:implicit-switching-cost:e1}
        v = M^\dagger \left(z - \Phi(t+p, t) x - R^\zeta \zeta\right) + V r,
    \end{equation}
    where $r$ is a free variable that can take any value in $\mathbb{R}^{mp - n}$. 
    
    %\haoyuan{rename this function}
    Let $G$ denote the objective function of the optimization problem induced by $\xi_t^p$, i.e.,
    \[G(y, v) := \left(\sum_{\tau=1}^{p-1} f_{t + \tau}(y_\tau)+ c_{t + \tau}(v_{\tau - 1})\right) + c_{t+p}(v_{p-1}).\]
    Since we can express the state vector $y$ and control vector $v$ as linear functions of $x, z, \zeta$ and $r$, we can write the switching cost \eqref{equ:switching-cost} as an unconstrained optimization problem
    \begin{equation}\label{lemma:implicit-switching-cost:e2}
        \min_{r \in \mathbb{R}^{mp - n}} G(y(x, z, \zeta, r), v(x, z, \zeta, r)),
    \end{equation}
    where functions $y(x, z, \zeta, r)$ and $v(x, z, \zeta, r)$ are determined by
    \begin{equation}\label{lemma:implicit-switching-cost:e3}
        \begin{bmatrix}
            y\\
            v
        \end{bmatrix} = \begin{bmatrix}
            S^x - S^v M^\dagger \Phi(t+p, t) & S^v M^\dagger & S^\zeta - S^v M^\dagger R^\zeta & S^v V\\
            - M^\dagger \Phi(t+p, t) & M^\dagger & -M^\dagger R^\zeta & V
        \end{bmatrix} \cdot \begin{bmatrix}
            x\\
            z\\
            \zeta\\
            r
        \end{bmatrix}.
    \end{equation}
    Note that, when $a \not = 1$, it follows from Lemma \ref{lemma:mat-norm-upper} that
    \begin{align*}
        \Vert S^\zeta \Vert \leq \left(\sum_{i=1}^p \sum_{j=1}^i \norm{\Phi(t+i, t+j)}^2\right)^{\frac{1}{2}} \leq \left(\sum_{i=1}^p \sum_{j=1}^i a^{2(i-j)}\right)^{\frac{1}{2}} = \frac{\sqrt{a^{2p+2} - (p+1)a^2 + p}}{\abs{a^2 - 1}};
    \end{align*}
    similarly, by Lemma \ref{lemma:mat-norm-upper} we also have
    \[\norm{S^x} \leq \sqrt{\frac{a^{2p+2} - 1}{a^2 - 1}},~
      \norm{M^\dagger} \leq \frac{b}{\sigma^2}\cdot \sqrt{\frac{a^{2p} - 1}{a^2 - 1}},~
      \norm{S^v} \leq b\Vert S^\zeta \Vert,~
      \Vert R^\zeta \Vert \leq \sqrt{\frac{a^{2p} - 1}{a^2 - 1}} \leq \frac{a^p - 1}{a - 1}.\]
    Note that Lemma \ref{lemma:mat-norm-upper} implies a slightly looser bound $\norm{A} \leq \sum_{i=1}^{\omega_1} \sum_{j=1}^{\omega_2} \norm{A_{ij}}$, so we have
    \begin{equation}\label{lemma:implicit-switching-cost:e4}
        \norm{\begin{bmatrix}
            S^x - S^v M^\dagger \Phi(t+p, t) & S^v M^\dagger & S^\zeta - S^v M^\dagger R^\zeta & S^v V\\
            - M^\dagger \Phi(t+p, t) & M^\dagger & -M^\dagger R^\zeta & V
        \end{bmatrix}} \leq C(p),
    \end{equation}
    where the constant $C(p)$ (in the case where $a \not = 1$) is given by
    \begin{align*}
        C(p) ={}& \left(\frac{b(a^{p+1} + a - 2)}{\sigma^2(a - 1)}\cdot \sqrt{\frac{a^{2p} - 1}{a^2 - 1}} + \frac{1+b}{b}\right)\left(\frac{b\sqrt{\left(a^{2p+2} - (p+1)a^2 + p\right)}}{\abs{a^2 - 1}} + 1\right) + \sqrt{\frac{a^{2p+2} - 1}{a^2 - 1}} - \frac{1}{b}.
    \end{align*}
    When $a = 1$, again by Lemma \ref{lemma:mat-norm-upper}, we can show that
    \begin{align*}
        \Vert S^\zeta \Vert \leq \left(\sum_{i=1}^p \sum_{j=1}^i \norm{\Phi(t+i, t+j)}^2\right)^{\frac{1}{2}} \leq \left(\sum_{i=1}^p \sum_{j=1}^i a^{2(i-j)}\right)^{\frac{1}{2}} = \sqrt{\frac{p(p+1)}{2}},
    \end{align*}
    and similarly,
    \[\norm{S^x} \leq \sqrt{p+1},~ \norm{M^\dagger} \leq \frac{b}{\sigma^2}\cdot \sqrt{p},~ \norm{S^v} \leq b\Vert S^\zeta \Vert,~ \Vert R^\zeta \Vert \leq \sqrt{p}.\]
    Therefore, when $a = 1$, the constant $C(p)$ should be set as follows to make inequality \eqref{lemma:implicit-switching-cost:e4} hold
    \[C(p) = \left(\frac{b \sqrt{p}}{\sigma^2}\left(\sqrt{p} + 2\right) + 1\right)\left(1 + b \sqrt{\frac{p(p+1)}{2}}\right) + \sqrt{p+1}\cdot\left(1 + \sqrt{\frac{p}{2}}\right).\]
    
    Since $G$ is convex and strongly smooth in $(x, u)$, and both $x, u$ are affine functions of $(y, z, r)$, we know $G(x(y, z, r), u(y, z, r))$ is convex and $\ell \cdot C(p)^2$-strongly smooth in $(y, z, r)$. Since $G(x, u)$ is $m_c$-strongly convex in $u$, by \eqref{lemma:implicit-switching-cost:e1}, we have
    \begin{align*}
        \nabla_{r}^2 G(x(y, z, w, r), u(y, z, w, r)) &\succeq V^\intercal \nabla_u^2 G(x, u) V\\
        &\succeq m_c I,
    \end{align*}
    where we use the fact that $\norm{V \nu}_2 = \norm{\nu}_2, \forall \nu \in \mathbb{R}^{mp-n}$ because the columns of $V$ are orthonormal in the last inequality. Therefore, by Lemma \ref{lemma:smoothness-opt-value}, we know that \eqref{lemma:implicit-switching-cost:e2} is convex and $L_2(p)$-strongly smooth in $(y, z)$, where
    \[L_2(p) := \ell \cdot C(p)^2 + \frac{\ell^2 \cdot C(p)^4}{m_c}.\]
    By Lemma \ref{lemma:smoothness-opt-value}, we also know that the optimal solution of \eqref{lemma:implicit-switching-cost:e2}
    \[r^*(x, z, \zeta) := \argmin_{r \in \mathbb{R}^{mp - n}}G(y(x, z, \zeta, r), v(x, z, \zeta, r))\]
    is $\ell \cdot C(p)^2/m_c$-Lipschitz. By \eqref{lemma:implicit-switching-cost:e3} and \eqref{lemma:implicit-switching-cost:e4}, we see that
    \[\psi_t^p(x, \zeta, z) = \begin{bmatrix}
            S^x - S^v M^\dagger \Phi(t+p, t) & S^v M^\dagger & S^\zeta - S^v M^\dagger R^\zeta & S^v V\\
            - M^\dagger \Phi(t+p, t) & M^\dagger & -M^\dagger R^\zeta & V
        \end{bmatrix} \cdot \begin{bmatrix}
            x\\
            z\\
            \zeta\\
            r^*(x, z, \zeta)
        \end{bmatrix}\]
    is $L_1(p)$-Lipschitz, where
    \[L_1(p) = C(p)(1 + \ell \cdot C(p)^2/m_c).\]

%\yiheng{Add the proof to 1.}

\section{Proof of Theorem \ref{thm:LTV-sensitivity}}\label{Appendix:thm:LTV-sensitivity}
The proof of Theorem \ref{thm:LTV-sensitivity} is based on the decision-point transformation introduced in Section \ref{sec:perturbation-ltv}.

Recall that $d$ denotes the controllability index, which has been defined in Definition \ref{def:controllability}. To show the perturbation bound of $\psi_{t}^{p}(\cdot,\cdot,\cdot)_{y_h}$, suppose $h$ and $p$ satisfy $q d \leq h < (q+1)d$ and $p = s d + r$, where $q,s,r \in \mathbb{N}$ and $0 \leq r < d$. Now we shall select the decision points as
\[y_0, y_{d}, \cdots, y_{(q-1)d}, y_h, y_{(q+2)d}, \cdots, y_{(s-1)d}, y_{\horizonvar},\]
which are also denoted by $y_{i_0}, \cdots, y_{i_{s-1}}$ for simplicity. Since the distance of any consecutive decision points falls in $[d, 2d)$, we can apply Lemma \ref{lemma:implicit-switching-cost} to bound the strong smoothness of switching costs. In the transformed SOCO problem, the disturbance input of the $(\tau-1)$-th time period is a vector $\bar{w}_{\tau-1} = \zeta_{i_{\tau-1}:i_{\tau}-1 } \in \mathbb{R}^{n \times (i_{\tau}-i_{\tau-1})}$. Each stage cost $\xi^{i_{\tau}-i_{\tau-1}}_{t}(x_{i_{\tau-1}}, \bar{w}_{\tau-1}, x_{i_{\tau}})$ is convex and $L_2(i_{\tau}-i_{\tau-1})$-strongly smooth by Lemma \ref{lemma:implicit-switching-cost}, and is thus $L_0$-strongly smooth by definition. Recall that the solution of the transformed SOCO problem is denoted by $\hat{\psi}(x_{t},\zeta,x_{t+p})$. Then by Theorem \ref{thm:SOCO-sensitivity} we have
\begin{align*}
    &\norm{\psi_{t}^p(x, \zeta, z)_{y_h} - \psi_{t}^p (x', \zeta', z')_{y_h}} \\
    {}={}& \norm{\hat{\psi}(x, \zeta, z)_{q} - \hat{\psi}(x', \zeta', z')_{q}} \\
    {}\leq{}& C_0 \left( \decayvar_0^{q-1} \norm{x-x'} + \sum_{\tau=0}^{s-2} \decayvar_0^{\abs{q-\tau}-1} \norm{\bar{w}_{\tau}-w'_{\tau}} + \decayvar_0^{(s-1)-q-1} \norm{z-z'} \right) \\
    {}={}& C_0 \left( \decayvar_0^{q-1} \norm{x-x'} + \sum_{\tau=0}^{s-2} \decayvar_0^{\abs{q-\tau}-1} \sum_{j=i_{\tau}}^{i_{\tau+1}-1} \norm{\zeta_j-\zeta'_j} + \decayvar_0^{(s-1)-q-1} \norm{z-z'} \right) \\
    {}\leq{}& \frac{C_0}{\decayvar_0} \left( \decayvar^{i_{q}-i_{0}} \norm{x-x'} + \sum_{\tau=0}^{s-2} \sum_{j=i_{\tau}}^{i_{\tau+1}-1} \decayvar^{\abs{j-i_{q}}} \norm{\zeta_j-\zeta'_j} + \decayvar^{i_{s-1}-i_{q}} \norm{z-z'} \right) \\
    {}={}& C \left(\decayvar^h \norm{x-x'} + \sum_{\tau = 0}^{p-1} \decayvar^{\abs{h-\tau}} \norm{\zeta_\tau-\zeta_\tau'} + \decayvar^{p-h} \norm{z-z'} \right).
\end{align*}
The last inequality holds because each interval is of length at most $(2d-1)$. Here the constants are
\begin{gather*}
    C_0 = \frac{2L_0}{m_c},~ \decayvar_0 = 1 - 2 \cdot \left( \sqrt{1 + (2L_0 / m_c)} + 1 \right)^{-1},\\
    C = C_0 / \decayvar_0 = \frac{2L_0}{m_c} \left( 1 - 2 \cdot \left( \sqrt{1 + (2L_0 / m_c)} + 1 \right)^{-1} \right)^{-1},~ \decayvar = \left(1 - 2\left(\sqrt{1 + (2L_0/m_c)} + 1\right)^{-1}\right)^{\frac{1}{2d - 1}}.
\end{gather*}

The proof of the perturbation bound of $\psi_{t}^{p}(\cdot,\cdot,\cdot)_{y_h}$ is quite similar. 
The only difference lies in the terminal cost, which can be addressed with the addition of a fixed auxiliary state. 
Specifically, we append $x_{\text{aux}} = 0$ to the end of the decision point sequence, and define a zero transition cost to the auxiliary state $\hat{c}_{s}(x_{t+p}, \bar{w}_{s-1}, x_{\text{aux}}) \equiv 0$ (note that $\hat{c}_{s}$ is trivially convex and $L_0$-strongly smooth). Denote the solution of the modified version of transformed SOCO problem by $\hat{\psi}'(x_t, \zeta, x_{\text{aux}})$, then by the same argument as above, we have
\begin{align*}
    \norm{\tilde{\psi}_{t}^p(x, \zeta; F)_{y_h} - \tilde{\psi}_{t}^p (x', \zeta'; F)_{y_h}} 
    &= \norm{\hat{\psi}'(x, \zeta, 0)_{q} - \hat{\psi}'(x', \zeta', 0)_{q}} \\
    &\leq \cdots
    \leq C \left(\decayvar^h \norm{x-x'} + \sum_{\tau = 0}^{p-1} \decayvar^{\abs{h-\tau}} \norm{\zeta_\tau-\zeta_\tau'} \right),
\end{align*}
where the constants are the same as previously defined. This finishes the proof of Theorem \ref{thm:LTV-sensitivity}.

\section{Stability of the Optimal Trajectory}\label{Appendix:coro:opt-stable}
In this section, we use the perturbation bound derived in Theorem \ref{thm:LTV-sensitivity} to establish the stability of the optimal trajectory, i.e., the optimal trajectory found by $\tilde{\psi}_t^p(x, \zeta; F)$ will stay in a ball centered at the origin if the disturbances are uniformly upper bounded. This result will be useful when showing the input-to-state stability and dynamic regret bound in Theorem \ref{thm:distance-MPC-infty}.

\begin{corollary}[Stability of the Optimal Trajectory]\label{coro:opt-stable}
For the predicted trajectory found by solving \eqref{opt:without-terminal} with prediction window $\horizonvar \ge d$, the norm of the $h$-th predictive state is bounded above by
\[\norm{\tilde{\psi}_{t}^\horizonvar(x, \zeta; F)_{y_h}} \leq C \left(\decayvar^h \norm{x} + \sum_{\tau = 0}^{\horizonvar-1}\decayvar^{\abs{h - \tau}} \norm{\zeta_\tau}\right) \leq C \decayvar^h \norm{x} + \frac{2 C}{1 - \decayvar} \sup_{\tau}\norm{\zeta_\tau},\]
where $C, \lambda$ are the same constants as in Theorem~\ref{thm:LTV-sensitivity}.
\end{corollary}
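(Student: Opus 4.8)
The plan is to obtain this corollary directly from Theorem~\ref{thm:LTV-sensitivity} by comparing the predicted trajectory against the all-zero trajectory. First I would observe that the optimization problem $\tilde\psi_t^\horizonvar(0, 0; F)$ — that is, \eqref{opt:without-terminal} with initial state $x = 0$ and disturbance sequence $\zeta = 0$ — is solved exactly by $y_\tau = 0$ and $v_\tau = 0$ for all $\tau$. This choice is feasible, since $y_0 = 0$ and $y_\tau = A_{t+\tau-1}\cdot 0 + B_{t+\tau-1}\cdot 0 + 0 = 0$ for every $\tau$ (and, in the indicator-terminal-cost case, $y_\horizonvar = 0$ meets the terminal constraint). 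Its objective value is $\sum_{\tau} f_{t+\tau}(0) + \sum_{\tau} c_{t+\tau}(0) + F(0) = 0$ by Assumption~\ref{assump:well-condition}(4) and the hypothesis $F(0) = 0$. Since every cost term is non-negative, $0$ is the global minimum, and by strong convexity of each $f_{t+\tau}$ the minimizer is unique; hence $\tilde\psi_t^\horizonvar(0,0;F)_{y_h} = 0$ for every $h$.

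Second, I would invoke the first inequality of Theorem~\ref{thm:LTV-sensitivity} with $(x', \zeta') = (0, 0)$ — applicable because $\horizonvar \geq d$ and $F$ satisfies the stated requirements — to get
\[
\norm{\tilde\psi_t^\horizonvar(x,\zeta;F)_{y_h} - \tilde\psi_t^\horizonvar(0,0;F)_{y_h}} \leq C\left(\decayvar^h\norm{x} + \sum_{\tau=0}^{\horizonvar-1}\decayvar^{\abs{h-\tau}}\norm{\zeta_\tau}\right),
\]
and then substitute $\tilde\psi_t^\horizonvar(0,0;F)_{y_h} = 0$ from the first step, which yields the first claimed bound. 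For the second bound I would pull $\sup_\tau\norm{\zeta_\tau}$ out of the sum and estimate the geometric-type series by extending the index range to all of $\mathbb{Z}$:
\[
\sum_{\tau=0}^{\horizonvar-1}\decayvar^{\abs{h-\tau}} \leq \sum_{j=-\infty}^{\infty}\decayvar^{\abs{j}} = \frac{1+\decayvar}{1-\decayvar} \leq \frac{2}{1-\decayvar},
\]
using $0 < \decayvar < 1$, so that $\sum_{\tau=0}^{\horizonvar-1}\decayvar^{\abs{h-\tau}}\norm{\zeta_\tau} \leq \tfrac{2}{1-\decayvar}\sup_\tau\norm{\zeta_\tau}$; the term $C\decayvar^h\norm{x}$ is carried over unchanged.

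There is no genuinely hard step here — the argument is a one-line application of the perturbation bound plus an elementary series estimate. The only point that requires a little care, and which I would expect to be the main thing a careful reader checks, is the claim that the all-zero trajectory is \emph{the} optimizer of $\tilde\psi_t^\horizonvar(0,0;F)$: this relies crucially on the normalization in Assumption~\ref{assump:well-condition}(4) (non-negativity of $f_t, c_t$ with $f_t(0) = c_t(0) = 0$) together with $F(0) = 0$. Without this normalization the identity $\tilde\psi_t^\horizonvar(0,0;F)_{y_h} = 0$ would fail in general, and the corollary would acquire an additive constant offset.
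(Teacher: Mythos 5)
Your proof is correct and follows exactly the same route as the paper's: identify the all-zero trajectory as the minimizer of $\tilde{\psi}_t^{\horizonvar}(0,0;F)$, apply the first perturbation inequality of Theorem~\ref{thm:LTV-sensitivity} with $(x',\zeta')=(0,0)$, and bound the sum $\sum_{\tau=0}^{\horizonvar-1}\decayvar^{\abs{h-\tau}}$ by $2/(1-\decayvar)$. The only difference is that you spell out the feasibility, non-negativity, and uniqueness argument behind $\tilde{\psi}_t^{\horizonvar}(0,0;F)_{y_h}=0$, which the paper states without proof.
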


\begin{proof}[Proof of Corollary \ref{coro:opt-stable}]
Note that $\tilde{\psi}_t^p(0, 0; F)_{y_h} = 0$. By Theorem \ref{thm:LTV-sensitivity}, we see that
\begin{align*}
    \norm{\tilde{\psi}_{t}^p(x, \zeta; F)_{y_h}} &= \norm{\tilde{\psi}_{t}^p(x, \zeta; F)_{y_h} - \tilde{\psi}_{t}^p(0, 0; F)_{y_h}}\\
    &\leq C \left(\decayvar^h \norm{x} + \sum_{\tau = 0}^{p-1}\decayvar^{\abs{h - \tau}} \norm{\zeta_\tau}\right)\\
    &\leq C \decayvar^h \norm{x} + \frac{2 C}{1 - \decayvar} \sup_{\tau}\norm{\zeta_\tau},
\end{align*}
where the last inequality holds because
\[\sum_{\tau = 0}^{p-1}\decayvar^{\abs{h - \tau}} \leq \frac{2}{1 - \decayvar}.\]
\end{proof}

\section{Smoothness of the Optimal Cost}\label{Appendix:smooth-cost}
In this section, we use Lemma \ref{lemma:implicit-switching-cost} to show a result (Corollary \ref{coro:smooth-cost}) on the smoothness of the optimal cost of a $p$-step trajectory between the initial state $x$ and the terminal state $z$. Intuitively, Corollary \ref{coro:smooth-cost} implies that changing the initial/terminal state will not affect the optimal cost of a $p$-step trajectory between them significantly. This result will be useful when converting bounds on the distance between $PC_k$'s trajectory and the offline optimal trajectory to bounds on the cost $PC_k$ incurs.

\begin{corollary}\label{coro:smooth-cost}
For any time step $t$ and integer $p$ that satisfies $p \geq d$, function $\iota_t^p(\cdot, \zeta, \cdot)$ satisfies that
\[\iota_{t}^p(x, \zeta, z) \leq (1 + \eta) \iota_{t}^p\left(x', \zeta, z'\right) + \frac{L_0 + \ell_f}{2}\left(1 + \frac{1}{\eta}\right)\left(\norm{x' - x}^2 + \norm{z' - z}^2\right), \forall x, x', \zeta, z, z',\]
where $L_0$ is the same constant as in Theorem~\ref{thm:LTV-sensitivity}.
\end{corollary}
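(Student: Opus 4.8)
The plan is to reduce $\iota_t^p(\cdot,\zeta,\cdot)$ to a \emph{separable} function of the initial and terminal states by freezing the intermediate anchor states of the LTV-to-SOCO reduction from Section~\ref{sec:perturbation-ltv}, and then to invoke the elementary fact that a non-negative, $\beta$-strongly smooth function $g$ grows at most quadratically around any point and satisfies $\norm{\nabla g(y)}^2 \le 2\beta\, g(y)$.

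First I would fix $t$ and $p\ge d$ and, exactly as in the reduction used to prove Theorem~\ref{thm:LTV-sensitivity}, choose anchor times $0=\ell_0<\ell_1<\cdots<\ell_s=p$ with every gap $\ell_j-\ell_{j-1}$ in $[d,2d-1]$ (e.g.\ write $p=qd+r$ with $0\le r<d$ and take $q-1$ gaps of length $d$ followed by one gap of length $d+r$). By the principle of optimality together with the definition of $\xi$ in \eqref{equ:switching-cost}, one rewrites
\[\iota_t^p(x,\zeta,z) = \min_{\hat x_{1:s-1}} \left(\sum_{j=1}^{s}\xi_{t+\ell_{j-1}}^{\ell_j-\ell_{j-1}}(\hat x_{j-1}, \zeta_{\ell_{j-1}:\ell_j-1}, \hat x_j) + \sum_{j=1}^{s-1} f_{t+\ell_j}(\hat x_j)\right) + f_{t+p}(z),\]
where $\hat x_0:=x$ and $\hat x_s:=z$; here $f_{t+p}(z)$ is the only "loose" hitting cost because it is not absorbed into any $\xi$-term. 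By Lemma~\ref{lemma:implicit-switching-cost} each $\xi$-term is convex, non-negative (as $\iota_\tau^{\horizonvar}(\cdot,\cdot,z)\ge f_{\tau+\horizonvar}(z)$), and $L_2(\ell_j-\ell_{j-1})$-strongly smooth, hence $L_0$-strongly smooth since $\ell_j-\ell_{j-1}\in[d,2d-1]$ and $L_0=\max_{d\le\horizonvar\le 2d-1}L_2(\horizonvar)$; and $f_{t+p}$ is non-negative and $\ell_f$-strongly smooth by Assumption~\ref{assump:well-condition}.

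Next I would let $\hat x^*_{1:s-1}$ be the optimal anchors for $\iota_t^p(x',\zeta,z')$ and define the frozen-anchor function $g(a,b)$ to be the bracketed expression above with the anchors held at $\hat x^*_{1:s-1}$ and with $\hat x_0=a$, $\hat x_s=b$. Three observations: $g\ge 0$ everywhere (a finite sum of non-negative costs, each finite because every gap is $\ge d$ and the system is controllable); $\iota_t^p(a,\zeta,b)\le g(a,b)$ for all $a,b$ (we relaxed the minimization over anchors); and $g(x',z')=\iota_t^p(x',\zeta,z')$ (the anchors are optimal there). Crucially, $a$ enters $g$ only through the $j=1$ $\xi$-term and $b$ only through the $j=s$ $\xi$-term and through $f_{t+p}$, so by separability $g$ is $(L_0+\ell_f)$-strongly smooth on $\bR^{2n}$, and it is differentiable since $\xi$ is convex and strongly smooth and $f$ is twice continuously differentiable.

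Finally I would expand the strong-smoothness upper bound for $g$ at $(x',z')$, bound the linear term via Cauchy–Schwarz and the gradient inequality $\norm{\nabla g(x',z')}^2\le 2(L_0+\ell_f)\,g(x',z')$, and apply Young's inequality with parameter $\eta/(L_0+\ell_f)$ to obtain
\[\iota_t^p(x,\zeta,z) \le g(x,z) \le (1+\eta)\,g(x',z') + \frac{L_0+\ell_f}{2}\left(1+\frac{1}{\eta}\right)\left(\norm{x-x'}^2 + \norm{z-z'}^2\right);\]
replacing $g(x',z')$ by $\iota_t^p(x',\zeta,z')$ yields the claim. I expect the main obstacle to be the first step: one must track the hitting-cost bookkeeping in the principle-of-optimality decomposition carefully (exactly one loose $f_{t+p}$, each other hitting cost counted once) and, more importantly, keep all anchor gaps inside $[d,2d-1]$ so that each $\xi$-term is $L_0$-strongly smooth — applying smoothness to $\xi_t^p$ directly would leave the much larger, $p$-dependent constant $L_2(p)$ in place of $L_0$. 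The remaining manipulations are routine convex analysis.
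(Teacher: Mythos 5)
Your proposal is correct, and it reaches the bound by a genuinely different organization than the paper. The paper proves the same elementary inequality for non-negative smooth functions (its Lemma \ref{lemma:smooth-difference}, which is exactly your Cauchy--Schwarz plus $\norm{\nabla g(y)}^2 \le 2\beta g(y)$ plus Young step), applies it directly to $\iota_t^p = \xi_t^p + f_{t+p}$ in the base case $d \le p \le 2d-1$, and then handles $p = 2d$ and $p > 2d$ by splitting the trajectory at one or two interior states and applying the base-case inequality only to the two end segments, leaving the middle segment untouched. You instead set up a single uniform argument for all $p \ge d$: decompose $\iota_t^p$ over anchors with all gaps in $[d, 2d-1]$, freeze the interior anchors at the values optimal for the \emph{primed} problem $(x', \zeta, z')$, and apply the smoothness inequality once to the resulting separable, non-negative, $(L_0+\ell_f)$-strongly smooth majorant $g$. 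The two key inequalities $\iota_t^p(x,\zeta,z) \le g(x,z)$ and $g(x',z') = \iota_t^p(x',\zeta,z')$ then close the argument. Your version avoids the case analysis and, notably, gets the direction of the suboptimality inequality right by anchoring at the primed problem's optimal interior states: the paper's writeup fixes $x_1 := \psi_t^p(x,\zeta,z)_{y_d}$ (optimal for the unprimed problem), under which the final step $\iota_t^d(x',\cdot,x_1) + \iota_{t+d}^d(x_1,\cdot,z') \le \iota_t^p(x',\zeta,z')$ points the wrong way; choosing the interior point from $\psi_t^p(x',\zeta,z')$, as you implicitly do, is the correct repair. Your bookkeeping of the hitting costs (each anchor's $f$ absorbed into exactly one $\iota$-term, with only $f_{t+p}(z)$ left loose) and your insistence on gaps in $[d,2d-1]$ so that every $\xi$-term is $L_0$- rather than $L_2(p)$-smooth are both exactly the points the paper's case analysis is designed to secure, so the two proofs rest on the same ingredients (Lemma \ref{lemma:implicit-switching-cost} and the principle of optimality) even though they are assembled differently.
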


Before showing Corollary \ref{coro:smooth-cost}, we first show a property of strongly smooth functions for completeness.

\begin{lemma}\label{lemma:smooth-difference}
Suppose function $g: \mathbb{R}^n \to \mathbb{R}_+$ is convex, $\ell$-strongly smooth, and continuously differentiable. For all $x, y \in \mathbb{R}^n$ and $\eta > 0$, we have
\[g(x) \leq (1 + \eta) g(y) + \frac{\ell}{2}\left(1 + \frac{1}{\eta}\right)\norm{x - y}^2.\]
\end{lemma}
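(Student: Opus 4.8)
\textbf{Proof proposal for Lemma \ref{lemma:smooth-difference}.}
The plan is to start from the definition of $\ell$-strong smoothness applied to the pair $(y,x)$, namely
\[
g(x) \leq g(y) + \inner{\nabla g(y)}{x - y} + \frac{\ell}{2}\norm{x - y}^2,
\]
and then control the cross term $\inner{\nabla g(y)}{x-y}$ using a weighted Cauchy--Schwarz / Young inequality: for any $\eta > 0$,
\[
\inner{\nabla g(y)}{x - y} \leq \frac{\eta}{2\ell}\norm{\nabla g(y)}^2 + \frac{\ell}{2\eta}\norm{x - y}^2.
\]
Substituting this bound into the smoothness inequality already produces the right shape; what remains is to convert $\norm{\nabla g(y)}^2$ into something proportional to $g(y)$.

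The key step — and the only one that uses non-negativity of $g$ — is the self-bounding property $\norm{\nabla g(y)}^2 \leq 2\ell\, g(y)$. I would derive it by applying $\ell$-strong smoothness at $y$ against the specific point $z = y - \frac{1}{\ell}\nabla g(y)$, which minimizes the quadratic upper model, yielding
\[
0 \leq g(z) \leq g(y) + \inner{\nabla g(y)}{z - y} + \frac{\ell}{2}\norm{z - y}^2 = g(y) - \frac{1}{2\ell}\norm{\nabla g(y)}^2,
\]
where the first inequality is exactly $g \geq 0$. Rearranging gives $\norm{\nabla g(y)}^2 \leq 2\ell\, g(y)$.

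Plugging this into the earlier estimate, the cross term contributes at most $\frac{\eta}{2\ell}\cdot 2\ell\, g(y) + \frac{\ell}{2\eta}\norm{x-y}^2 = \eta\, g(y) + \frac{\ell}{2\eta}\norm{x-y}^2$, so altogether
\[
g(x) \leq g(y) + \eta\, g(y) + \frac{\ell}{2\eta}\norm{x-y}^2 + \frac{\ell}{2}\norm{x-y}^2 = (1+\eta)\,g(y) + \frac{\ell}{2}\Bigl(1 + \frac{1}{\eta}\Bigr)\norm{x - y}^2,
\]
which is the claimed inequality. I do not anticipate a genuine obstacle here — the argument is short and standard; if anything, the one place to be careful is getting the constant bookkeeping in the Young step exactly right so that the $\eta/(2\ell)$ factor cancels the $2\ell$ from the self-bounding bound, and noting that convexity of $g$ is not actually needed for this particular statement (only non-negativity and smoothness are used).
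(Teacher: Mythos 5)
Your proposal is correct and follows essentially the same route as the paper's proof: apply $\ell$-strong smoothness, bound the cross term with Young's inequality using weight $\eta/\ell$, and control $\norm{\nabla g(y)}^2$ by $2\ell\, g(y)$ via evaluating the smoothness bound at $y - \nabla g(y)/\ell$ and invoking non-negativity. Your side remark that convexity is not actually needed is also accurate.
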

\begin{proof}[Proof of Lemma \ref{lemma:smooth-difference}]
\begin{align*}
    g(x) - g(y) &\leq \langle \nabla g(y), x - y\rangle + \frac{\ell}{2}\norm{x - y}^2\\
    &\leq \frac{\eta}{2\ell}\norm{\nabla g(y)}^2 + \frac{\ell}{2\eta}\norm{x - y}^2 + \frac{\ell}{2}\norm{x - y}^2\\
    &\leq \eta g(y) + \frac{\ell}{2}\left(1 + \frac{1}{\eta}\right)\norm{x - y}^2.
\end{align*}
where the second inequality follows from the generalized mean inequality and the last inequality holds because
\[ 0 \le g\left(y - \frac{\nabla g(y)}{\ell} \right) \le g(y) - \left\langle \nabla g(y), \frac{\nabla g(y)}{\ell} \right\rangle + \frac{\ell}{2}\norm{\frac{\nabla g(y)}{\ell}}^2 = g(y) - \frac{1}{2\ell} \norm{\nabla g(y)}^2\]
\end{proof}

Now we come back to the proof of Corollary \ref{coro:smooth-cost}.

\begin{proof}[Proof of Corollary \ref{coro:smooth-cost}]
When $d \leq p \leq 2d - 1$, since $\xi_t^p(x, \zeta, z)$ is $L_0$-strongly smooth by Lemma \ref{lemma:implicit-switching-cost}, we know
\[\iota_t^p(x, \zeta, z) = \xi_t^p(x, \zeta, z) + f_{t+p}(z)\]
is $(L_0 + \ell_f)$-strongly smooth. Therefore, by Lemma \ref{lemma:smooth-difference}, we obtain that
\[\iota_{t}^p(x, \zeta, z) \leq (1 + \eta) \iota_{t}^p\left(x', \zeta, z'\right) + \frac{L_0 + \ell_f}{2}\left(1 + \frac{1}{\eta}\right)\left(\norm{x' - x}^2 + \norm{z' - z}^2\right).\]

When $p = 2d$, let $x_1 := \psi_t^p(x, \zeta, z)_{y_d}$, and we obtain that
\begin{align*}
    \iota_t^p(x, \zeta, z) ={}& \iota_t^{d}(x, \zeta_{0:d-1}, x_1) + \iota_{t+d}^{d}(x_1, \zeta_{d:2d-1}, z)\\
    \leq{}& (1 + \eta) \iota_t^{d}(x', \zeta_{0:d-1}, x_1) + \frac{L_0 + \ell_f}{2}\left(1 + \frac{1}{\eta}\right)\norm{x - x'}^2\\
    &+ (1 + \eta) \iota_{t+d}^{d}(x_1, \zeta_{d:2d-1}, z') + \frac{L_0 + \ell_f}{2}\left(1 + \frac{1}{\eta}\right)\norm{z - z'}^2\\
    \leq{}& (1 + \eta) \iota_{t}^p\left(x', \zeta, z'\right) + \frac{L_0 + \ell_f}{2}\left(1 + \frac{1}{\eta}\right)\left(\norm{x' - x}^2 + \norm{z' - z}^2\right).
\end{align*}

When $p > 2d$, let $x_1 := \psi_t^p(x, \zeta, z)_{y_d}, x_2 := \psi_t^p(x, \zeta, z)_{y_{p-d}}$, and we obtain that
\begin{align*}
    \iota_t^p(x, \zeta, z) ={}& \iota_t^{d}(x, \zeta_{0:d-1}, x_1) +\iota_{t+d}^{p-2d}(x_1, \zeta_{d:p-d-1}, x_2) + \iota_{t+p-d}^{d}(x_2, \zeta_{p-d:p-1}, z)\\
    \leq{}& (1 + \eta) \iota_t^{d}(x', \zeta_{0:d-1}, x_1) + \frac{L_0 + \ell_f}{2}\left(1 + \frac{1}{\eta}\right)\norm{x - x'}^2\\
    &+ \iota_{t+d}^{p-2d}(x_1, \zeta_{d:p-d-1}, x_2)\\
    &+ (1 + \eta) \iota_{t+p-d}^{d}(x_2, \zeta_{p-d:p-1}, z') + \frac{L_0 + \ell_f}{2}\left(1 + \frac{1}{\eta}\right)\norm{z - z'}^2\\
    \leq{}& (1 + \eta) \iota_{t}^p\left(x', \zeta, z'\right) + \frac{L_0 + \ell_f}{2}\left(1 + \frac{1}{\eta}\right)\left(\norm{x' - x}^2 + \norm{z' - z}^2\right).
\end{align*}
\end{proof}

\section{Proof of Lemma \ref{lemma:one-step-diff}}\label{Appendix:lemma:one-step-diff}
For simplicity, we will use the shorthand notations
\[\tilde{\psi}_t^p(x; F) := \tilde{\psi}_t^p(x, w_{t:t+p-1}; F) \text{ and }\psi_t^p(x, z) := \psi_t^p(x, w_{t:t+p-1}, z)\]
throughout the proof, since the indices of the disturbances can be inferred from the starting time $t$ and horizon $p$. We also define
    \[z := \tilde{\psi}_{t}^p\left(x_t; F\right)_{y_p}, z' := \tilde{\psi}_{t}^{p+1}\left(x_t; F\right)_{y_p}.\]
Then it is straightforward to see that
    \begin{subequations}\label{equ:one-step-diff}
    \begin{align}
        \norm{\tilde{\psi}_{t}^p\left(x_t; F\right)_{y_h} - \tilde{\psi}_{t}^{p+1}\left(x_t; F\right)_{y_h}}
        ={}&\norm{\psi_{t}^p\left(x_t, z\right)_{y_h} - \psi_{t}^p\left(x_t, z'\right)_{y_h}}\label{equ:one-step-diff:s1}\\
        \leq{}& C \decayvar^{p-h}\norm{z - z'}\label{equ:one-step-diff:s2}\\
        \leq{}& 2 C \decayvar^{p-h}\left(C\decayvar^p \norm{x_t} + \frac{2 C}{1 - \decayvar} D\right)\label{equ:one-step-diff:s3}.
    \end{align}
    \end{subequations}
where we use the definition of $\psi$ and $\tilde{\psi}$ in \eqref{equ:one-step-diff:s1}, Theorem \ref{thm:LTV-sensitivity} in \eqref{equ:one-step-diff:s2}, and Corollary \ref{coro:opt-stable} in \eqref{equ:one-step-diff:s3}.

\section{Proof of Theorem \ref{thm:distance-MPC-infty}}\label{Appendix:thm:distance-MPC-infty}
Throughout the proof, we will use $\{(\hat{x}_t, \hat{u}_t)\}$ to denote the trajectory of predictive control with prediction window $T$ ($PC_T$). Recall that $\{(x_t, u_t)\}$ denotes the trajectory of predictive control with prediction window $k$ ($PC_k$), and $\{(x_t^*, u_t^*)\}$ denotes the offline optimal trajectory ($OPT$), i.e., the optimal solution of \eqref{equ:online_control_problem}.

For simplicity, we will use the shorthand notations
\[\tilde{\psi}_t^p(x; F) := \tilde{\psi}_t^p(x, w_{t:t+p-1}; F) \text{ and }\psi_t^p(x, z) := \psi_t^p(x, w_{t:t+p-1}, z)\]
throughout the proof.

Since $x_{t+1} = \tilde{\psi}_t^p(x_t; F)_{y_1}$, for all $2 \leq i \leq k$, we have
\begin{equation}\label{thm:ISS-e0}
    \tilde{\psi}_{t-i}^k (x_{t-i}; F)_{y_i} = \tilde{\psi}_{t-i+1}^{k-1}(x_{t-i+1}; F)_{y_{i-1}}.
\end{equation}
Therefore, it can be shown that, for $k \leq t \leq T - k$,
\begin{subequations}\label{thm:ISS-e1}
\begin{align}
    \norm{x_t} ={}& \norm{\tilde{\psi}_{t-1}^k(x_{t-1}; F)_{y_1}}\nonumber\\
    \leq{}& \sum_{i = 1}^{k-1} \norm{\tilde{\psi}_{t-i}^k(x_{t-i}; F)_{y_i} - \tilde{\psi}_{t-i-1}^k(x_{t-i-1}; F)_{y_{i+1}}} + \norm{\tilde{\psi}_{t-k}^k(x_{t-k}; F)_{y_k}}\label{thm:ISS-e1:s1}\\
    ={}& \sum_{i = 1}^{k-1} \norm{\tilde{\psi}_{t-i}^k(x_{t-i}; F)_{y_i} - \tilde{\psi}_{t-i}^{k-1}(x_{t-i}; F)_{y_{i}}} + \norm{\tilde{\psi}_{t-k}^k(x_{t-k}; F)_{y_k}}\label{thm:ISS-e1:s2}\\
    \leq{}& \sum_{i=1}^{k-1} 2 C \decayvar^{k - 1 - i}\left(C \decayvar^{k-1} \norm{x_{t-i}} + \frac{2C}{1 - \decayvar} D\right) + \left(C \decayvar^k \norm{x_{t-k}} + \frac{2C}{1 - \decayvar} D\right)\label{thm:ISS-e1:s3}\\
    \leq{}& C \decayvar^{k-1} \left(\decayvar \norm{x_{t-k}} + 2 C\sum_{i=1}^{k-1}\decayvar^{k-1-i}\norm{x_{t-i}}\right) + \frac{2 C}{1 - \decayvar}\left(1 + \frac{2 C}{1 - \decayvar}\right) D,\nonumber
\end{align}
\end{subequations}
where we use triangle inequality in \eqref{thm:ISS-e1:s1}, \eqref{thm:ISS-e0} in \eqref{thm:ISS-e1:s2}, and Lemma \ref{lemma:one-step-diff} and Corollary \ref{coro:opt-stable} in \eqref{thm:ISS-e1:s3}.

By a similar argument, for $1 \leq t \leq k$, we have
\begin{align}\label{thm:ISS-e2}
    \norm{x_t} ={}& \norm{\tilde{\psi}_{t-1}^k(x_{t-1}; F)_{y_1}}\nonumber\\
    \leq{}& \sum_{i = 1}^{t-1} \norm{\tilde{\psi}_{t-i}^k(x_{t-i}; F)_{y_i} - \tilde{\psi}_{t-i-1}^k(x_{t-i-1}; F)_{y_{i+1}}} + \norm{\tilde{\psi}_{0}^k(x_{0}; F)_{y_t}}\nonumber\\%\label{thm:ISS-e2:s1}\\
    ={}& \sum_{i = 1}^{t-1} \norm{\tilde{\psi}_{t-i}^k(x_{t-i}; F)_{y_i} - \tilde{\psi}_{t-i}^{k-1}(x_{t-i}; F)_{y_{i}}} + \norm{\tilde{\psi}_{0}^k(x_{0}, F)_{y_t}}\nonumber\\%\label{thm:ISS-e2:s2}\\
    \leq{}& \sum_{i=1}^{t-1} 2 C \decayvar^{k - 1 - i}\left(C \decayvar^{k-1} \norm{x_{t-i}} + \frac{2C}{1 - \decayvar} D\right) + \left(C \decayvar^t \norm{x_{0}} + \frac{2C}{1 - \decayvar} D\right)\nonumber\\%\label{thm:ISS-e2:s3}\\
    \leq{}& C \decayvar^{k-1} \left(2 C\sum_{i=1}^{t-1}\decayvar^{k-1-i}\norm{x_{t-i}}\right) + C\norm{x_0} + \frac{2 C}{1 - \decayvar}\left(1 + \frac{2 C}{1 - \decayvar}\right) D,
\end{align}

Recall that, under the assumption of \eqref{equ:regret-condition}, the sum of coefficients in \eqref{thm:ISS-e1} and \eqref{thm:ISS-e2} are upper bounded by
\[C \decayvar^{k-1} \left(2 C\sum_{i=1}^{t-1}\decayvar^{k-1-i}\right) \leq 1 - \delta, \; C \decayvar^{k-1} \left(\decayvar + 2 C\sum_{i=1}^{k-1}\decayvar^{k-1-i}\right) < 1 - \delta.\]
Then, using inequalities \eqref{thm:ISS-e1} and \eqref{thm:ISS-e2}, we can show by induction that, for $t \leq T - k$,
\begin{equation}\label{thm:ISS-e3}
    \norm{x_t} \leq \frac{C}{\delta}\cdot (1 - \delta)^{\max(0, t - k)} \norm{x_0} + \frac{2 C}{\delta (1 - \decayvar)}\left(1 + \frac{2 C}{1 - \decayvar}\right) D.
\end{equation}
For $t \geq T - k + 1$, by Corollary \ref{coro:opt-stable}, we see that
\begin{align*}
    \norm{x_t} &= \norm{\tilde{\psi}_{T-k}^k(x_{T-k}, w_{T-k:T-1}; 0)_{y_{t + k - T}}}\\
    &\leq C \decayvar^{t+k-T}\norm{x_{T-k}} + \frac{2C D}{1 - \decayvar}\\
    &\leq \frac{C^2}{\delta}\cdot (1 - \delta)^{T - 2k} \decayvar^{t+k-T}\norm{x_0} + \left(\frac{2 C^2}{\delta (1 - \decayvar)}\left(1 + \frac{2 C}{1 - \decayvar}\right) + \frac{2C}{1 - \decayvar}\right)D.
\end{align*}

This finishes the proof of ISS of $PC_k$.

By Lemma \ref{lemma:one-step-diff} and \eqref{thm:ISS-e3}, we also see that for $t \leq T - k$,
\begin{align}\label{thm:distance-MPC-infty:e1}
    \norm{\tilde{\psi}_{t}^k\left(x_t; F\right)_{y_1} - \tilde{\psi}_{t}^{T-t}\left(x_t; F\right)_{y_1}} &\leq \sum_{p=k}^{T - t}\norm{\tilde{\psi}_{t}^p\left(x_t; F\right)_{y_1} - \tilde{\psi}_{t}^{p+1}\left(x_t; F\right)_{y_1}}\nonumber\\
    &\leq \sum_{p=k}^\infty 2 C \decayvar^{p-1}\left(C\decayvar^p \norm{x_t} + \frac{2 C}{1 - \decayvar} D\right)\nonumber\\
    &= \frac{2 C^2}{\decayvar (1 - \decayvar^2)}\cdot \decayvar^{2k} \norm{x_t} + \frac{4 C^2}{\decayvar (1 - \decayvar)^2}\cdot \decayvar^k D\nonumber\\
    &= O\left(\left(D + \frac{\decayvar^k(\norm{x_0} + D)}{\delta}\right)\decayvar^k\right).
\end{align}
We further obtain that for $t \leq T - k$,
\begin{subequations}\label{thm:distance-MPC-infty:e2}
\begin{align}
    \norm{x_t - \hat{x}_t}
    ={}& \norm{x_t - \tilde{\psi}_0^T(x_0; F)_{y_t}}\nonumber{}\\
    \leq{}& \norm{x_t - \tilde{\psi}_{t-1}^{T-t+1}(x_{t-1}; F)_{y_1}} + \sum_{i=1}^{t-1} \norm{\tilde{\psi}_{t-i}^{T - t + i}(x_{t-i}; F)_{y_i} - \tilde{\psi}_{t-i-1}^{T - t + i + 1}(x_{t-i-1}; F)_{y_{i+1}}}\nonumber\\
    \leq{}& \norm{x_t - \tilde{\psi}_{t-1}^{T - t + 1}(x_{t-1}; F)_{y_1}} + \sum_{i = 1}^{t-1} C \decayvar^{i} \norm{x_{t-i} - \tilde{\psi}_{t-i-1}^{T - t + i + 1}(x_{t-i-1}; F)_{y_1}}\label{thm:distance-MPC-infty:e2:s1}\\
    ={}& O\left(\left(D + \frac{\decayvar^k(\norm{x_0} + D)}{\delta}\right)\decayvar^k\right),\label{thm:distance-MPC-infty:e2:s2}
\end{align}
\end{subequations}
where in \eqref{thm:distance-MPC-infty:e2:s1}, we use Theorem \ref{thm:LTV-sensitivity} and the fact that $\tilde{\psi}_{t-i-1}^{T-t+i+1}(x_{t-i-1})_{y_{i+1}}$ can be written as
\[\tilde{\psi}_{t-i-1}^{T - t + i + 1}(x_{t-i-1}; F)_{y_{i+1}} = \tilde{\psi}_{t-i}^{T - t + i}\left(\tilde{\psi}_{t-i-1}^{T - t + i + 1}(x_{t-i-1}; F)_{y_1}; F\right)_{y_i};\]
in \eqref{thm:distance-MPC-infty:e2:s2}, we use \eqref{thm:distance-MPC-infty:e1} and the following observations
\begin{gather*}
    \norm{x_{t-i} - \tilde{\psi}_{t-i-1}^{T - t + i + 1}(x_{t-i-1}; F)_{y_1}} = \norm{\tilde{\psi}_{t-i-1}^k\left(x_{t-i-1}; F\right)_{y_1} - \tilde{\psi}_{t-i-1}^{T - t + i + 1}(x_{t-i-1}; F)_{y_1}},\\
    1 + \sum_{i=1}^{t-1} C \decayvar^i \leq 1 + \frac{C}{1 - \decayvar} = O(1).
\end{gather*}

By Corollary \ref{coro:opt-stable} and triangle inequality, we see that
\[\norm{x_T^* - \hat{x}_T} \leq 2C\decayvar^T \norm{x_0} + \frac{4C D}{1 - \decayvar}.\]
Then by Theorem \ref{thm:LTV-sensitivity}, the following holds for all $t \leq T - k$:
\[\norm{x_t^* - \hat{x}_t} = \norm{\psi_0^T(x_0, x^*_T) - \psi_0^T(x_0, \hat{x}_T)} \leq C \decayvar^k \left(2C\decayvar^T \norm{x_0} + \frac{4C D}{1 - \decayvar}\right).\]
Combining this inequality with \eqref{thm:distance-MPC-infty:e2} gives
\begin{equation}\label{thm:distance-MPC-infty:e2-0}
    \norm{x_t - x_t^*} = O\left(\left(D + \frac{\decayvar^k(\norm{x_0} + D)}{\delta}\right)\decayvar^k\right), \; \forall t \leq T - k.
\end{equation}

Since
\[(u_t - {u}_t^*) = B_t^\dagger \left((x_{t+1} - {x}_{t+1}^*) - A_t (x_t - {x}_t^*)\right),\]
we have
\[\norm{u_t - {u}_t^*} \leq b'\left(\norm{x_{t+1} - {x}_{t+1}^*} + a \norm{x_t - {x}_t^*}\right).\]
Therefore, by Corollary \ref{coro:smooth-cost}, for any $\eta > 0$ we have
\begin{align}\label{thm:distance-MPC-infty:e3}
    &\iota_{t}^1(x_{t}, x_{t+1}) - (1 + \eta)\iota_{t}^1({x}_{t}^*, {x}_{t+1}^*)\nonumber\\
    ={}& \left(f_{t+1}(x_{t+1}) - (1 + \eta) f_{t+1}({x}_{t+1}^*)\right) + \left(c_{t+1}(u_{t}) - (1 + \eta) c_{t+1}({u}_{t}^*)\right)\nonumber\\
    \leq{}& \frac{1}{2}\left(1 + \frac{1}{\eta}\right)\left(\ell_f \norm{x_{t+1} - {x}_{t+1}^*}^2 + \ell_c \norm{u_t - {u}_t^*}^2\right)\nonumber\\
    \leq{}& \frac{1}{2}\left(1 + \frac{1}{\eta}\right) \left(\ell_f + 2(b')^2 \ell_c\right)\norm{x_{t+1} - {x}_{t+1}^*}^2 + \frac{1}{2}\left(1 + \frac{1}{\eta}\right) 2 a^2 (b')^2 \ell_c \norm{x_t - {x}_t^*}^2\nonumber\\
    \leq{}& \left(1 + \frac{1}{\eta}\right)\cdot \frac{L_4}{2} \left(\norm{x_{t} - {x}_{t}^*}^2 + \norm{x_{t+1} - {x}_{t+1}^*}^2\right),
\end{align}
where
\[L_4 := \ell_f + 2(b')^2 \ell_c + 2 a^2 (b')^2 \ell_c.\]

Then, for any $\eta > 0$, we obtain the following inequality:
\begin{subequations}\label{equ:regret_e1}
\begin{align}
    &cost(PC_k) - (1 + \eta) cost(OPT)\nonumber\\
    ={}& \left(\sum_{t = 0}^{T-k-1} \iota_{t}^1(x_{t}, x_{t + 1}) + \iota_{T-k}^k(x_{T-k}, x_T)\right) - (1 + \eta)\left(\sum_{t = 0}^{T-k-1} \iota_{t}^1({x}_{t}^*, {x}_{t + 1}^*) + \iota_{T-k}^k(x_{T-k}^*, x_T^*)\right)\nonumber\\
    ={}& \sum_{t = 0}^{T-k-1} \left(\iota_{t}^1(x_{t}, x_{t+1}) - (1 + \eta)\iota_{t}^1(x_{t}^*, x_{t+1}^*)\right) + \left(\iota_{T-k}^k(x_{T-k}, x_T) - (1 + \eta)\iota_{T-k}^k(x_{T-k}^*, x_T^*)\right) \nonumber\\
    \leq{}& \sum_{t = 0}^{T-k-1} \left(\iota_{t}^1(x_{t}, x_{t+1}) - (1 + \eta)\iota_{t}^1(x_{t}^*, x_{t+1}^*)\right) + \left(\iota_{T-k}^k(x_{T-k}, x_T^*) - (1 + \eta)\iota_{T-k}^k(x_{T-k}^*, x_T^*)\right) \label{equ:regret_e1:s1}\\
    \leq{}& \left(1 + \frac{1}{\eta}\right)\cdot \frac{L_4}{2} \sum_{t = 0}^{T - k - 1} \left(\norm{x_{t} - x_{t}^*}^2 + \norm{x_{t+1} - x_{t+1}^*}^2\right) + \left(1 + \frac{1}{\eta}\right)\cdot \frac{L_0 + \ell_f}{2}\norm{x_{T-k} - x_{T-k}^*}^2 \label{equ:regret_e1:s2}\\
    ={}& \left(1 + \frac{1}{\eta}\right)\cdot L_4 \sum_{t = 0}^{T - k - 1} \norm{x_{t} - x_{t}^*}^2 + \left(1 + \frac{1}{\eta}\right)\cdot \frac{L_4 + L_0 + \ell_f}{2}\norm{x_{T-k} - x_{T-k}^*}^2\nonumber\\ %+ \left(1 + \frac{1}{\eta}\right)\cdot \frac{L_4}{2}\norm{x_{T-k} - x_{T-k}^*}^2
    \leq{}& \left(1 + \frac{1}{\eta}\right) O\left(\left(D + \frac{\decayvar^k(\norm{x_0} + D)}{\delta}\right)^2 \decayvar^{k} T\right), \label{equ:regret_e1:s3}
\end{align}
\end{subequations}
where we use the fact that our algorithm $PC_k$ plans optimally after time step $T-k$ in \eqref{equ:regret_e1:s1}; we also use \eqref{thm:distance-MPC-infty:e3} and Corollary \ref{coro:smooth-cost} in \eqref{equ:regret_e1:s2}, and \eqref{thm:distance-MPC-infty:e2} in \eqref{equ:regret_e1:s3}.

% \yang{Add proof sketch for $cost(OPT) = O(T)$.}

To bound the optimal cost, we consider a suboptimal controller inspired by the decision-point transformation, where the controller forces the states $x_d, x_{2d}, \cdots, x_{(v-1)d}, x_{vd+r}$ to be $0$ ($d$ is the controllability index, and $T = vd+r$). The cost of this suboptimal control is determined by the transformed transition cost $\xi_{t}^{p}(\cdot, \cdot, \cdot)$ between each pair of consecutive decision points. By strong smoothness of $\xi_t^p(\cdot, \cdot, \cdot)$ proven in Lemma \ref{lemma:implicit-switching-cost}, we have
\[\xi_t^p(x, \zeta, 0) \leq \frac{1}{2} L_2(p) \left( \norm{\zeta}^2 + \norm{x}^2 \right) \leq \frac{L_0 D^2}{2} p + \frac{L_0}{2} \norm{x}^2,\]
where $L_0 = \max_{d \leq p \leq 2d-1} L_2(p)$. These inequalities add up to
\begin{align*}
    cost(OPT) 
    &\leq \xi_{0}^{d}(x_0,w_{0:d-1},0) + \sum_{\tau=1}^{v-2} \xi_{\tau d}^{d}(0,w_{\tau d:(\tau+1)d-1},0) + \xi_{(t-1)d}^{d+r}(0,w_{(v-1)d:T-1},0)\\
    &\leq \frac{L_0 D^2}{2} T + \frac{L_0}{2} \norm{x_0}^2\\
    &= O(D^2 T + \norm{x_0}^2).
\end{align*}
Hence $cost(OPT) = O(D^2 T + \norm{x_0}^2)$. Now we can take $\eta = \Theta(\lambda^k)$ in \eqref{equ:regret_e1} to get a regret bound of
\[cost(PC_k) - cost(OPT) = O\left(\left(D + \frac{\decayvar^k(\norm{x_0} + D)}{\delta}\right)^2 \decayvar^{k} T + \lambda^k \norm{x_0}^2\right).\]

%\section{Proof of Theorem \ref{thm:MPC-CR-general}}\label{Appendix:thm:MPC-CR-general}
%\input{Proofs/competitive_ratio}

\section{Proof of Theorem~\ref{thm:new-competitive-ratio}}\label{Appendix:thm:new-competitive-ratio}
To show Theorem \ref{thm:new-competitive-ratio}, we first state a result (Lemma \ref{lemma:new-one-step-diff}) that bounds the change in decision points against the change in prediction window. It is similar with Lemma \ref{lemma:one-step-diff}, but the key difference is that the right hand side of the inequality in Lemma \ref{lemma:new-one-step-diff} is composed by offline optimal states and their distance to the algorithm's states. This result is critical for showing the competitive ratio, and relies on the assumption that $F$ is the indicator function of the origin.

\begin{lemma}\label{lemma:new-one-step-diff}
    Under the same assumptions as Theorem \ref{thm:new-competitive-ratio}, for any integers $\horizonvar, h$ such that $\horizonvar \geq h \geq 1$ and time step $t < T - \horizonvar$, we have
    \[\norm{\tilde{\psi}_{t}^\horizonvar \left(x_t, w_{t:t+\horizonvar-1}; F\right)_{y_h} - \tilde{\psi}_{t}^{\horizonvar +1}\left(x_t, w_{t:t+\horizonvar}; F\right)_{y_h}} \leq C \lambda^{p-h} \left(\norm{x_{t+p}^*} + C \lambda^p\norm{x_t - x_t^*} + C \norm{x_{t+p+1}^*}\right).\]
\end{lemma}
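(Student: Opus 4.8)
The plan is to follow the structure of the proof of Lemma~\ref{lemma:one-step-diff}, but to bound the change in the terminal predictive state using the \emph{offline optimal} trajectory rather than the stability bound of Corollary~\ref{coro:opt-stable}. The key simplification available here, and the reason this lemma requires $F$ to be the indicator function of the origin, is that in this case $\tilde{\psi}_t^\horizonvar(x_t, w_{t:t+\horizonvar-1}; F) = \psi_t^\horizonvar(x_t, w_{t:t+\horizonvar-1}, 0)$, so the last predictive state $z := \tilde{\psi}_t^\horizonvar(x_t, w_{t:t+\horizonvar-1}; F)_{y_\horizonvar}$ equals $0$. Set $z' := \tilde{\psi}_t^{\horizonvar+1}(x_t, w_{t:t+\horizonvar}; F)_{y_\horizonvar}$, the $\horizonvar$-th predictive state of the one-step-longer trajectory. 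By the principle of optimality applied to $\psi$, we have $\tilde{\psi}_t^\horizonvar(x_t, w_{t:t+\horizonvar-1}; F)_{y_h} = \psi_t^\horizonvar(x_t, w_{t:t+\horizonvar-1}, z)_{y_h}$, and for $h \le \horizonvar$ also $\tilde{\psi}_t^{\horizonvar+1}(x_t, w_{t:t+\horizonvar}; F)_{y_h} = \psi_t^\horizonvar(x_t, w_{t:t+\horizonvar-1}, z')_{y_h}$, since the first $\horizonvar$ steps of the optimal $(\horizonvar+1)$-step trajectory are themselves optimal for the subproblem with endpoints $x_t$ and $z'$. Applying the terminal-perturbation half of Theorem~\ref{thm:LTV-sensitivity} to $\psi_t^\horizonvar$, with the initial state and disturbances held fixed, gives
\[\norm{\tilde{\psi}_t^\horizonvar(x_t, w_{t:t+\horizonvar-1}; F)_{y_h} - \tilde{\psi}_t^{\horizonvar+1}(x_t, w_{t:t+\horizonvar}; F)_{y_h}} \le C\decayvar^{\horizonvar - h}\norm{z - z'} = C\decayvar^{\horizonvar - h}\norm{z'}.\]

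It remains to bound $\norm{z'} = \norm{\psi_t^{\horizonvar+1}(x_t, w_{t:t+\horizonvar}, 0)_{y_\horizonvar}}$ in terms of the offline optimum. Here I would use that, by the principle of optimality applied to the offline optimal trajectory $\{x_\tau^*, u_\tau^*\}$, its restriction to $[t, t+\horizonvar+1]$ solves $\psi_t^{\horizonvar+1}(x_t^*, w_{t:t+\horizonvar}, x_{t+\horizonvar+1}^*)$ (this is where $t < T - \horizonvar$ is needed, so that $x_{t+\horizonvar+1}^*$ exists), hence $x_{t+\horizonvar}^* = \psi_t^{\horizonvar+1}(x_t^*, w_{t:t+\horizonvar}, x_{t+\horizonvar+1}^*)_{y_\horizonvar}$. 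Since $z'$ and $x_{t+\horizonvar}^*$ arise from the same problem $\psi_t^{\horizonvar+1}$ with the same disturbances but initial states $x_t$ versus $x_t^*$ and terminal states $0$ versus $x_{t+\horizonvar+1}^*$, Theorem~\ref{thm:LTV-sensitivity} with target index $\horizonvar$ and horizon $\horizonvar+1$ yields
\[\norm{z' - x_{t+\horizonvar}^*} \le C\decayvar^{\horizonvar}\norm{x_t - x_t^*} + C\decayvar\norm{x_{t+\horizonvar+1}^*}.\]
Using the triangle inequality and $\decayvar \le 1$ gives $\norm{z'} \le \norm{x_{t+\horizonvar}^*} + C\decayvar^\horizonvar\norm{x_t - x_t^*} + C\norm{x_{t+\horizonvar+1}^*}$, and substituting this into the previous display produces exactly the claimed bound.

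The argument is essentially bookkeeping once the observation $z = 0$ is in place, so I do not expect a genuine obstacle; the only care needed is (i) invoking the principle of optimality in two places — for $\psi$, to rewrite the two $h$-th predictive states with a common initial state, and for $\{x_\tau^*\}$, to identify $x_{t+\horizonvar}^*$ as a terminal-constrained optimal solution — and (ii) ensuring the horizons $\horizonvar$ and $\horizonvar+1$ appearing here are at least the controllability index $d$, so that Theorem~\ref{thm:LTV-sensitivity} applies and the terminal state $0$ is reachable; this is guaranteed by the standing hypothesis $k \ge d$ of Theorem~\ref{thm:new-competitive-ratio}, exactly as in Lemma~\ref{lemma:one-step-diff}. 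The substantive change relative to Lemma~\ref{lemma:one-step-diff} is simply that the crude stability bound of Corollary~\ref{coro:opt-stable} for $\norm{z'}$ is replaced by a direct comparison of the predictive subproblem's terminal state to the offline optimal state $x_{t+\horizonvar}^*$.
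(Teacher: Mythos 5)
Your proposal is correct and follows essentially the same route as the paper's proof: both reduce the comparison to a terminal-state perturbation of $\psi_t^{\horizonvar}$ via $z=0$, then bound $\norm{z-z'}$ by the triangle inequality through $x_{t+\horizonvar}^*$ and a second application of Theorem \ref{thm:LTV-sensitivity} to $\psi_t^{\horizonvar+1}$ with endpoints $(x_t,0)$ versus $(x_t^*,x_{t+\horizonvar+1}^*)$. The only cosmetic difference is that you simplify $\norm{z-z'}$ to $\norm{z'}$ before splitting, whereas the paper splits $\norm{z-z'}\leq\norm{z-x_{t+\horizonvar}^*}+\norm{z'-x_{t+\horizonvar}^*}$ directly; the resulting bounds are identical.
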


\begin{proof}[Proof of Lemma \ref{lemma:new-one-step-diff}]
For simplicity, we will use the shorthand notations
\[\tilde{\psi}_t^p(x; F) := \tilde{\psi}_t^p(x, w_{t:t+p-1}; F) \text{ and }\psi_t^p(x, z) := \psi_t^p(x, w_{t:t+p-1}, z)\]
throughout the proof, since the indices of the disturbances can be inferred from the starting time $t$ and horizon $p$. We also define
    \[z := \tilde{\psi}_{t}^p\left(x_t; F\right)_{y_p}, z' := \tilde{\psi}_{t}^{p+1}\left(x_t; F\right)_{y_p}.\]
Note that $z = 0$. Then it is straightforward to see that
    \begin{subequations}\label{equ:new-one-step-diff}
    \begin{align}
        \norm{\tilde{\psi}_{t}^p\left(x_t; F\right)_{y_h} - \tilde{\psi}_{t}^{p+1}\left(x_t; F\right)_{y_h}}
        ={}&\norm{\psi_{t}^p\left(x_t, z\right)_{y_h} - \psi_{t}^p\left(x_t, z'\right)_{y_h}}\label{equ:new-one-step-diff:s1}\\
        \leq{}& C \decayvar^{p-h}\norm{z - z'}\label{equ:new-one-step-diff:s2}\\
        \leq{}& C \decayvar^{p-h}\left(\norm{z - x_{t+p}^*} + \norm{z' - x_{t+p}^*}\right)\nonumber\\
        \leq{}& C \decayvar^{p-h}\left(\norm{x_{t+p}^*} + \norm{\psi_t^{p+1}(x_t, 0)_{y_p} - \psi_t^{p+1}(x_t^*, x_{t+p+1}^*)_{y_p}}\right)\nonumber\\
        \leq{}& C \lambda^{p-h} \left(\norm{x_{t+p}^*} + C \lambda^p\norm{x_t - x_t^*} + C \norm{x_{t+p+1}^*}\right)\label{equ:new-one-step-diff:s3},
    \end{align}
    \end{subequations}
where we use the definition of $\psi$ and $\tilde{\psi}$ in \eqref{equ:new-one-step-diff:s1}, Theorem \ref{thm:LTV-sensitivity} in \eqref{equ:new-one-step-diff:s2} and \eqref{equ:new-one-step-diff:s3}.
\end{proof}

Now we come back to the proof of Theorem \ref{thm:new-competitive-ratio}. %The proof technique we used can be viewed as a generalization of the potential function method used in \cite{goel2019beyond, shi2020online}. Specifically, we consider how the potential change through time, but instead of studying $\phi_t - \phi_{t-1}$ directly, we upper bound $\phi_t$ by a weighted sum of $\phi_1, \ldots, \phi_{t-1}$ plus a part of the offline optimal cost (see \eqref{}).

\begin{proof}[Proof of Theorem \ref{thm:new-competitive-ratio}]
Throughout the proof, we will use $\{(\hat{x}_t, \hat{u}_t)\}$ to denote the trajectory of predictive control with prediction window $T$ ($PC_T$). Recall that $\{(x_t, u_t)\}$ denotes the trajectory of predictive control with prediction window $k$ ($PC_k$), and $\{(x_t^*, u_t^*)\}$ denotes the offline optimal trajectory ($OPT$), i.e., the optimal solution of \eqref{equ:online_control_problem}.

By Lemma \ref{lemma:new-one-step-diff}, we see that for $t \leq T - k$,
\begin{align}\label{thm:new-distance-MPC-infty:e1}
    \norm{\tilde{\psi}_{t}^k\left(x_t; F\right)_{y_1} - \tilde{\psi}_{t}^{T-t}\left(x_t; F\right)_{y_1}} &\leq \sum_{p=k}^{T - t - 1}\norm{\tilde{\psi}_{t}^p\left(x_t; F\right)_{y_1} - \tilde{\psi}_{t}^{p+1}\left(x_t; F\right)_{y_1}}\nonumber\\
    &\leq \sum_{p=k}^{T-t-1} C \lambda^{p-1} \left(\norm{x_{t+p}^*} + C \lambda^p\norm{x_t - x_t^*} + C \norm{x_{t+p+1}^*}\right)\nonumber\\
    &\leq \frac{C^2}{\decayvar(1 - \decayvar^2)}\cdot \decayvar^{2k}\norm{x_t - x_t^*} + C(C + 1)\sum_{p=k}^{T - t} \lambda^{p-2} \norm{x_{t+p}^*}.
\end{align}
We further obtain that for $t \leq T - k$,
%\guannan{First line below, missing subscript for $\tilde{\psi}_0^T(x_0; F)$?}
\begin{subequations}\label{thm:new-distance-MPC-infty:e2}
\begin{align}
    \norm{x_t - \hat{x}_t}
    ={}& \norm{x_t - \tilde{\psi}_0^T(x_0; F)_{y_t}}\nonumber{}\\
    \leq{}& \norm{x_t - \tilde{\psi}_{t-1}^{T-t+1}(x_{t-1}; F)_{y_1}} + \sum_{i=1}^{t-1} \norm{\tilde{\psi}_{t-i}^{T - t + i}(x_{t-i}; F)_{y_i} - \tilde{\psi}_{t-i-1}^{T - t + i + 1}(x_{t-i-1}; F)_{y_{i+1}}}\nonumber\\
    \leq{}& \norm{x_t - \tilde{\psi}_{t-1}^{T - t + 1}(x_{t-1}; F)_{y_1}} + \sum_{i = 1}^{t-1} C \decayvar^{i} \norm{x_{t-i} - \tilde{\psi}_{t-i-1}^{T - t + i + 1}(x_{t-i-1}; F)_{y_1}}\label{thm:new-distance-MPC-infty:e2:s1}\\
    \leq{}& \sum_{i = 0}^{t-1} C \decayvar^{i} \norm{x_{t-i} - \tilde{\psi}_{t-i-1}^{T - t + i + 1}(x_{t-i-1}; F)_{y_1}}\label{thm:new-distance-MPC-infty:e2:s1-1}\\
    \leq{}& \frac{C^3}{\decayvar(1 - \decayvar^2)}\cdot \decayvar^{2k} \left(\sum_{i=1}^{t-1}\lambda^{i-1} \norm{x_{t-i} - x_{t-i}^*}\right) + C^2(C + 1)\sum_{i=0}^{t-1} \sum_{p = k}^{T - t + i + 1} \lambda^{i + p - 2}\norm{x_{t-i-1+p}^*}\label{thm:new-distance-MPC-infty:e2:s1-2}\\
    \leq{}& \frac{C^3}{\decayvar(1 - \decayvar^2)}\cdot \decayvar^{2k} \left(\sum_{i=1}^{t-1}\lambda^{i-1} \norm{x_{t-i} - x_{t-i}^*}\right) + \frac{C^2 (C + 1)}{\lambda^2(1 - \lambda^2)}\cdot \lambda^k \sum_{j=1}^T \lambda^{\abs{j - t - k + 1}} \norm{x_j^*},\label{thm:new-distance-MPC-infty:e2:s2}
\end{align}
\end{subequations}
where in \eqref{thm:new-distance-MPC-infty:e2:s1}, we use Theorem \ref{thm:LTV-sensitivity} and the fact that $\tilde{\psi}_{t-i-1}^{T-t+i+1}(x_{t-i-1})_{y_{i+1}}$ can be written as
\[\tilde{\psi}_{t-i-1}^{T - t + i + 1}(x_{t-i-1}; F)_{y_{i+1}} = \tilde{\psi}_{t-i}^{T - t + i}\left(\tilde{\psi}_{t-i-1}^{T - t + i + 1}(x_{t-i-1}; F)_{y_1}; F\right)_{y_i}.\]
In \eqref{thm:new-distance-MPC-infty:e2:s1-1}, we use $C \geq 1$. In \eqref{thm:new-distance-MPC-infty:e2:s1-2}, we use \eqref{thm:new-distance-MPC-infty:e1} and the following observation
\begin{gather*}
    \norm{x_{t-i} - \tilde{\psi}_{t-i-1}^{T - t + i + 1}(x_{t-i-1}; F)_{y_1}} = \norm{\tilde{\psi}_{t-i-1}^k\left(x_{t-i-1}; F\right)_{y_1} - \tilde{\psi}_{t-i-1}^{T - t + i + 1}(x_{t-i-1}; F)_{y_1}}.
\end{gather*}
We reorganize the second term in \eqref{thm:new-distance-MPC-infty:e2:s1-2} to obtain \eqref{thm:new-distance-MPC-infty:e2:s2}. Specifically, for each outer loop index $i$, we see the coefficients before $\norm{x_j^*}$ (ignore $C^2(C + 1)$) can be written according to the table below:

\begin{center}
\begin{tabular}{|c||c|c|c|c|c|c|c|}
\hline
index $j$  & $\cdots$ & $t+k-3$ & $t+k-2$ & $t+k-1$ & $t+k$ & $\cdots$ & $T$ \\
\hline
\hline
 $i = 0$ & $\cdots$ & $0$ & $0$ & $\lambda^{k-2}$ & $\lambda^{k-1}$ & $\cdots$ & $\lambda^{T-t-1}$\\
\hline
$i = 1$ & $\cdots$ & $0$ & $\lambda^{k-1}$ & $\lambda^{k}$ & $\lambda^{k+1}$ & $\cdots$ & $\lambda^{T-t+1}$\\
\hline
$i = 2$ & $\cdots$ & $\lambda^{k}$ & $\lambda^{k+1}$ & $\lambda^{k+2}$ & $\lambda^{k+3}$ & $\cdots$ & $\lambda^{T-t+3}$\\
\hline
$\vdots$ &  & $\vdots$ & $\vdots$ & $\vdots$ & $\vdots$ & & $\vdots$\\
\hline
\end{tabular}
\end{center}

Note that by Theorem \ref{thm:LTV-sensitivity}, the following holds for all $t \leq T - k$:
\[\norm{x_t^* - \hat{x}_t} = \norm{\psi_0^T(x_0, x^*_T)_{y_t} - \psi_0^T(x_0, 0)_{y_t}} \leq C \decayvar^{T - t} \norm{x_T^*}.\]
Combining this inequality with \eqref{thm:new-distance-MPC-infty:e2} gives that for all $t \leq T - k$,
\begin{align}\label{thm:new-distance-MPC-infty:e3}
    \norm{x_t - x_t^*} \leq{}& \frac{C^3}{\decayvar(1 - \decayvar^2)}\cdot \decayvar^{2k} \left(\sum_{i=1}^{t-1}\lambda^{i-1} \norm{x_{t-i} - x_{t-i}^*}\right)\nonumber\\
    &+ \frac{C^2 (C + 1)}{\lambda^2(1 - \lambda^2)}\cdot \lambda^k \sum_{j=1}^T \lambda^{\abs{j - t - k +1}}\norm{x_j^*} + C \decayvar^{T - t} \norm{x_T^*}.
\end{align}
Note that for any $T$ nonnegative real numbers $r_1, r_2, \ldots, r_T$ and any $\tau \in [1, T]$, we have the inequality
\[\left(\sum_{i=1}^T \lambda^{\abs{i - \tau}} r_i\right)^2 \leq \left(\sum_{i=1}^T \lambda^{\abs{i - \tau}}\right)\left(\sum_{i=1}^T \lambda^{\abs{i - \tau}} r_i^2\right) \leq \frac{2}{1 - \lambda}\cdot \left(\sum_{i=1}^T \lambda^{\abs{i - \tau}} r_i^2\right).\]
Taking square of both sides of \eqref{thm:new-distance-MPC-infty:e3} and apply the above inequality gives for all $t \leq T - k$,
\begin{subequations}\label{thm:new-distance-MPC-infty:e4}
\begin{align}
    \norm{x_t - x_t^*}^2 \leq{}& \frac{6 C^6}{\decayvar^2(1 - \decayvar)(1 - \decayvar^2)^2}\cdot \decayvar^{4k} \left(\sum_{i=1}^{t-1}\lambda^{i-1} \norm{x_{t-i} - x_{t-i}^*}^2\right)\nonumber\\
    &+ \frac{6 C^4 (C + 1)^2}{\lambda^4(1 - \lambda)(1 - \lambda^2)^2}\cdot \lambda^{2k} \sum_{i=1}^T \lambda^{\abs{i - t-k+1}}\norm{x_i^*}^2 + 3 C^2 \decayvar^{2(T - t)} \norm{x_T^*}^2\label{thm:new-distance-MPC-infty:e4:s1}\\
    \leq{}& \frac{6 C^6}{\decayvar^2(1 - \decayvar)(1 - \decayvar^2)^2}\cdot \decayvar^{4k} \left(\sum_{i=1}^{t-1}\lambda^{i-1} \norm{x_{t-i} - x_{t-i}^*}^2\right)\nonumber\\
    &+ \frac{12 C^4 (C + 1)^2}{\lambda^4 (1 - \lambda)(1 - \lambda^2)^2 m_f}\cdot \lambda^{2k} \sum_{i=1}^T \lambda^{\abs{i - t - k + 1}}H_i^* + \frac{6 C^2}{m_f}\cdot \decayvar^{2(T - t)} H_T^*\label{thm:new-distance-MPC-infty:e4:s2},
\end{align}
\end{subequations}
where we used the inequality that for any real numbers $r_1, r_2, r_3$,
\[(r_1 + r_2 + r_3)^2 \leq 3 (r_1^2 + r_2^2 + r_3^2)\]
in \eqref{thm:new-distance-MPC-infty:e4:s1}, and the strong convexity of state costs in \eqref{thm:new-distance-MPC-infty:e4:s2}.

Summing inequality \eqref{thm:new-distance-MPC-infty:e4} over $t = 1, \ldots, T - k$ gives
\begin{align*}
    &\sum_{t=1}^{T - k} \norm{x_t - x_t^*}^2 - \sum_{t=1}^{T-k} \frac{6 C^6}{\decayvar^2(1 - \decayvar)(1 - \decayvar^2)^2}\cdot \decayvar^{4k} \left(\sum_{i=1}^{t-1}\lambda^{i-1} \norm{x_{t-i} - x_{t-i}^*}^2\right)\\
    \leq{}& \frac{12 C^4 (C + 1)^2}{\lambda^4 (1 - \lambda)(1 - \lambda^2)^2 m_f}\cdot \lambda^{2k} \sum_{t=1}^{T-k} \sum_{i=1}^T \lambda^{\abs{i - t-k+1}}H_i^* + \frac{6 C^2}{m_f}\cdot \sum_{t=1}^{T - k} \decayvar^{2(T - t)} H_T^*
\end{align*}

When $C \ge 1$, the right-hand side becomes: 
\begin{align*}
    \mathrm{RHS}
    \leq{}& \frac{12 C^4 (C + 1)^2}{\lambda^4 (1 - \lambda)(1 - \lambda^2)^2 m_f}\cdot \lambda^{2k} \sum_{i=1}^T \sum_{t=1}^{T-k}\lambda^{\abs{i - t-k+1}}H_i^* + \frac{6 C^2}{(1 - \lambda^2) m_f}\cdot \lambda^{2k} H_T^*\nonumber\\
    \leq{}& \frac{12 C^4 (C + 1)^2}{\lambda^4(1 - \lambda)(1 - \lambda^2)^2 m_f}\cdot \lambda^{2k} \left(\frac{H_T^*}{1 - \lambda} + \sum_{i=1}^{T-1} \frac{2 H_i^*}{1 - \lambda}\right) + \frac{6 C^2}{(1 - \lambda^2) m_f}\cdot \lambda^{2k} H_T^*\nonumber\\
    \leq{}& \frac{24 C^4 (C + 1)^2}{\lambda^4(1 - \lambda)^2(1 - \lambda^2)^2 m_f}\cdot \lambda^{2k} \sum_{i=1}^T H_i^*,
\end{align*}

For the left-hand side, we re-arrange the sum and  use the assumption that $k$ is sufficiently large.
\begin{align*}
    \mathrm{LHS}
    &= \sum_{t=1}^{T - k} \norm{x_t - x_t^*}^2 - \frac{6 C^6}{\decayvar^2(1 - \decayvar)(1 - \decayvar^2)^2}\cdot \decayvar^{4k} \sum_{t=1}^{T-k-1} \sum_{j=t+1}^{T-k}\lambda^{j-t-1} \norm{x_{t} - x_{t}^*}^2 \\
    &\ge \sum_{t=1}^{T - k} \norm{x_t - x_t^*}^2 - \frac{6 C^6}{\decayvar^2(1 - \decayvar)(1 - \decayvar^2)^2}\cdot \decayvar^{4k} \sum_{t=1}^{T-k} \frac{1}{1 - \lambda} \norm{x_{t} - x_{t}^*}^2 \\
    &\ge \sum_{t=1}^{T - k} \norm{x_t - x_t^*}^2 - (1 - \varepsilon) \sum_{t=1}^{T - k} \norm{x_t - x_t^*}^2 \\
    &= \varepsilon \sum_{t=1}^{T - k} \norm{x_t - x_t^*}^2
\end{align*}
Therefore, we we have the inequality
\begin{equation}\label{thm:new-distance-MPC-infty:e5}
    \sum_{t=1}^{T - k} \norm{x_t - x_t^*}^2 \le \varepsilon^{-1} \frac{24 C^4 (C + 1)^2}{\lambda^4(1 - \lambda)^2(1 - \lambda^2)^2 m_f}\cdot \lambda^{2k} \sum_{i=1}^T H_i^*.
\end{equation}

Since
\[(u_t - {u}_t^*) = B_t^\dagger \left((x_{t+1} - {x}_{t+1}^*) - A_t (x_t - {x}_t^*)\right),\]
we have
\[\norm{u_t - {u}_t^*} \leq b'\left(\norm{x_{t+1} - {x}_{t+1}^*} + a \norm{x_t - {x}_t^*}\right).\]
Therefore, by Corollary \ref{coro:smooth-cost}, for any $\eta > 0$ we have
\begin{align}\label{thm:new-distance-MPC-infty:e6}
    &\iota_{t}^1(x_{t}, x_{t+1}) - (1 + \eta)\iota_{t}^1({x}_{t}^*, {x}_{t+1}^*)\nonumber\\
    ={}& \left(f_{t+1}(x_{t+1}) - (1 + \eta) f_{t+1}({x}_{t+1}^*)\right) + \left(c_{t+1}(u_{t}) - (1 + \eta) c_{t+1}({u}_{t}^*)\right)\nonumber\\
    \leq{}& \frac{1}{2}\left(1 + \frac{1}{\eta}\right)\left(\ell_f \norm{x_{t+1} - {x}_{t+1}^*}^2 + \ell_c \norm{u_t - {u}_t^*}^2\right)\nonumber\\
    \leq{}& \frac{1}{2}\left(1 + \frac{1}{\eta}\right) \left(\ell_f + 2(b')^2 \ell_c\right)\norm{x_{t+1} - {x}_{t+1}^*}^2 + \frac{1}{2}\left(1 + \frac{1}{\eta}\right) 2 a^2 (b')^2 \ell_c \norm{x_t - {x}_t^*}^2\nonumber\\
    \leq{}& \left(1 + \frac{1}{\eta}\right)\cdot \frac{L_4}{2} \left(\norm{x_{t} - {x}_{t}^*}^2 + \norm{x_{t+1} - {x}_{t+1}^*}^2\right),
\end{align}
where
\[L_4 := \ell_f + 2(b')^2 \ell_c + 2 a^2 (b')^2 \ell_c.\]

\begin{comment}
Recall that in \eqref{thm:distance-MPC-infty:e3} in Appendix \ref{Appendix:thm:distance-MPC-infty}, we have shown that
\begin{equation}\label{thm:new-distance-MPC-infty:e6}
    \iota_{t}^1(x_{t}, x_{t+1}) - (1 + \eta)\iota_{t}^1({x}_{t}^*, {x}_{t+1}^*)
    \leq \left(1 + \frac{1}{\eta}\right)\cdot \frac{L_4}{2} \left(\norm{x_{t} - {x}_{t}^*}^2 + \norm{x_{t+1} - {x}_{t+1}^*}^2\right),
\end{equation}
where
\[L_4 := \ell_f + 2(b')^2 \ell_c + 2 a^2 (b')^2 \ell_c.\]
\end{comment}

Then, for any $\eta > 0$, we obtain the following inequality:
\begin{subequations}\label{equ:new_regret_e1}
\begin{align}
    &cost(PC_k) - (1 + \eta) cost(OPT)\nonumber\\
    ={}& \left(\sum_{t = 0}^{T-k-1} \iota_{t}^1(x_{t}, x_{t + 1}) + \iota_{T-k}^k(x_{T-k}, x_T)\right) - (1 + \eta)\left(\sum_{t = 0}^{T-k-1} \iota_{t}^1({x}_{t}^*, {x}_{t + 1}^*) + \iota_{T-k}^k(x_{T-k}^*, x_T^*)\right)\nonumber\\
    ={}& \sum_{t = 0}^{T-k-1} \left(\iota_{t}^1(x_{t}, x_{t+1}) - (1 + \eta)\iota_{t}^1(x_{t}^*, x_{t+1}^*)\right) + \left(\iota_{T-k}^k(x_{T-k}, x_T) - (1 + \eta)\iota_{T-k}^k(x_{T-k}^*, x_T^*)\right) \nonumber\\
    \leq{}& \sum_{t = 0}^{T-k-1} \left(\iota_{t}^1(x_{t}, x_{t+1}) - (1 + \eta)\iota_{t}^1(x_{t}^*, x_{t+1}^*)\right) + \left(\iota_{T-k}^k(x_{T-k}, x_T^*) - (1 + \eta)\iota_{T-k}^k(x_{T-k}^*, x_T^*)\right) \label{equ:new_regret_e1:s1}\\
    \leq{}& \left(1 + \frac{1}{\eta}\right)\cdot \frac{L_4}{2} \sum_{t = 0}^{T - k - 1} \left(\norm{x_{t} - x_{t}^*}^2 + \norm{x_{t+1} - x_{t+1}^*}^2\right) + \left(1 + \frac{1}{\eta}\right)\cdot \frac{L_0 + \ell_f}{2}\norm{x_{T-k} - x_{T-k}^*}^2 \label{equ:new_regret_e1:s2}\\
    \le{}& \left(1 + \frac{1}{\eta}\right)\cdot L_4 \sum_{t = 0}^{T - k - 1} \norm{x_{t} - x_{t}^*}^2 + \left(1 + \frac{1}{\eta}\right)\cdot \frac{L_4 + L_0 + \ell_f}{2}\norm{x_{T-k} - x_{T-k}^*}^2\nonumber\\
    \leq{}& \left(1 + \frac{1}{\eta}\right)\cdot \frac{2L_4 + L_0 + \ell_f}{2} \cdot \sum_{t = 1}^{T - k} \norm{x_{t} - x_{t}^*}^2\nonumber\\
    \leq{}& \left(1 + \frac{1}{\eta}\right)\cdot \frac{1}{\varepsilon} \cdot \frac{12 C^4 (C + 1)^2 (2L_4 + L_0 + \ell_f)}{\lambda^4 (1 - \lambda)^2(1 - \lambda^2)^2 m_f}\cdot \lambda^{2k} \cdot cost(OPT), \label{equ:new_regret_e1:s3}
\end{align}
\end{subequations}
where we use the fact that our algorithm $PC_k$ plans optimally after time step $T-k$ in \eqref{equ:new_regret_e1:s1}; we also use \eqref{thm:new-distance-MPC-infty:e6} and Corollary \ref{coro:smooth-cost} in \eqref{equ:new_regret_e1:s2}, and \eqref{thm:new-distance-MPC-infty:e5} in \eqref{equ:new_regret_e1:s3}.

Set $\eta = \lambda^{k}$ in the above inequality gives that
\begin{align*}
    cost(PC_k) &\leq \left(1 + \lambda^k \cdot \left(1 + (1 + \lambda^k)\cdot \frac{1}{\varepsilon} \cdot \frac{12 C^4 (C + 1)^2 (2L_4 + L_0 + \ell_f)}{\lambda^4 (1 - \lambda)^2(1 - \lambda^2)^2 m_f}\right)\right) \cdot cost(OPT)\\
    &\leq \left(1 + \lambda^k \cdot \left(1 + \frac{24 C^4 (C + 1)^2 (2L_4 + L_0 + \ell_f)}{\epsilon \cdot \lambda^4 (1 - \lambda)^2(1 - \lambda^2)^2 m_f}\right)\right) \cdot cost(OPT).
\end{align*}
\end{proof}

\begin{comment} 
This proof is no longer needed in the paper.
\section{Proof of Input-to-State Stability of MPC}
\input{Proofs/ISS_stability}
\end{comment}

\end{document}